\renewcommand\part{%
   \if@noskipsec \leavevmode \fi
   \par
   \addvspace{4ex}%
   \@afterindentfalse
   \secdef\@part\@spart}
\def\@part[#1]#2{%
    \ifnum \c@secnumdepth >\m@ne
      \refstepcounter{part}%
      \addcontentsline{toc}{part}{\thepart\hspace{1em}#1}%
    \else
      \addcontentsline{toc}{part}{#1}%
    \fi
    {\parindent \z@ \raggedright
     \interlinepenalty \@M
     \normalfont
     \ifnum \c@secnumdepth >\m@ne
       \Large\bfseries \partname\nobreakspace\thepart
       \par\nobreak
     \fi
     \huge \bfseries #2%
     \par}%
    \nobreak
    \vskip 3ex
    \@afterheading}
\def\@spart#1{%
    {\parindent \z@ \raggedright
     \interlinepenalty \@M
     \normalfont
     \huge \bfseries #1\par}%
     \nobreak
     \vskip 3ex
     \@afterheading}
\newtheorem{Thm}{Theorem}[section]
\newtheorem{Prop}[Thm]{Proposition}
\newtheorem{Lem}[Thm]{Lemma}
\newtheorem{Cor}[Thm]{Corollary}
\newtheorem{Conj}[Thm]{Conjecture}
\newtheorem*{claim*}{Claim}
\theoremstyle{remark}
\newtheorem{remark}[Thm]{Remark}
\newtheorem{Ex}[Thm]{Example}
\numberwithin{equation}{section}
\newcommand{\de}{\, \mathrm{d}}
\newcommand{\Mat}{\mathrm{Mat}}
\newcommand{\SL}{\mathrm{SL}}
\newcommand{\GL}{\mathrm{GL}}
\newcommand{\PGL}{\mathrm{PGL}}
\newcommand{\Spin}{\mathrm{Spin}}
\newcommand{\SO}{\mathrm{SO}}
\newcommand{\Orth}{\mathrm{O}}
\newcommand{\SPnk}{\mathbf{P}_{n,k}}
\newcommand{\SDnk}{\mathbf{D}_{n,k}}
\newcommand{\G}{\mathbf{G}}
\newcommand{\SG}{\mathbf{G}}
\newcommand{\bSG}{\bar{\G}}
\newcommand{\Qp}{\mathbb{Q}_p}
\newcommand{\Zp}{\mathbb{Z}_p}
\newcommand{\R}{\mathbb{R}}
\newcommand{\Q}{\mathbb{Q}}
\newcommand{\Z}{\mathbb{Z}}
\newcommand{\N}{\mathbb{N}}
\newcommand{\A}{\mathbb{A}}
\newcommand{\RZ}{\R \times \widehat{\Z}}
\newcommand{\RkD}{\mathcal{R}^{n,k}_Q(D)}
\newcommand{\HkQD}{\mathcal{H}^{n,k}_Q(D)}
\newcommand{\U}{\mathcal{U}}
\newcommand{\Gr}[1]{\mathrm{Gr}_{n,#1}}
\newcommand{\Gror}[1]{\mathrm{Gr}^+_{n,#1}}
\newcommand{\Grk}{\mathrm{Gr}_{n,k}}
\newcommand{\disc}{\mathrm{disc}}
\newcommand{\Lie}{\mathrm{Lie}}
\newcommand{\spn}{\mathrm{span}}
\newcommand{\Pack}{\mathrm{P}}
\newcommand{\ord}{\mathrm{ord}}
\newcommand{\Grids}{\mathcal{Y}}
\newcommand{\Gridsor}{\mathcal{Y}^+}
\newcommand{\g}{\mathfrak{g}}
\newcommand{\h}{\mathfrak{h}}
\newcommand\rquot[2]{
  \mathchoice
  {
    \text{\raise0.5ex\hbox{$#1$}\big/\lower0.5ex\hbox{$#2$}}%
  }
  {
    #1\,/\,#2
  }
  {
    #1\,/\,#2
  }
  {
    #1\,/\,#2
  }
}
\newcommand\lquot[2]{
  \mathchoice
  {
    \text{\lower0.5ex\hbox{$#1$}\big\backslash\raise0.5ex\hbox{$#2$}}%
  }
  {
    #1\,\backslash\,#2
  }
  {
    #1\,\backslash\,#2
  }
  {
    #1\,\backslash\,#2
  }
}
\newcommand\lrquot[3]{
  \mathchoice
  {
    \text{\lower0.5ex\hbox{$#1$}\big\backslash\raise0.5ex\hbox{$#2$\!}\big/
      \lower0.5ex\hbox{\!\!$#3$}}%
  }
  {
    #1\,\backslash\,#2\,/\,#3
  }
  {
    #1\,\backslash\,#2\,/\,#3
  }
  {
    #1\,\backslash\,#2\,/\,#3
  }
}
\newcommand{\StSpin}{\mathbf{H}} 
\newcommand{\StSpinpt}{\mathbf{H}^{\mathrm{pt}}}
\newcommand{\StSO}{\mathbf{H}'}
\newcommand{\StDiag}{\mathbf{\Delta H}}
\newcommand{\bStDiag}{\mathbf{\Delta \bar{H}}}
\newcommand{\bStDiagcpt}{\Delta H^{\mathrm{cpt}}}
\newcommand{\gQ}{\eta_Q}
\newcommand{\shapesp}[1]{\mathcal{S}_{#1}}
\newcommand{\kfree}[2]{#1^{[#2]}}
\newcommand{\qsubscr}[1]{q_{{ }_{#1}}}
\newcommand{\isog}{\rho_Q}
\begin{document}

\title{Equidistribution of rational subspaces and their shapes}

\author{Menny Aka}
\address{Department of Mathematics, ETH Z{\"u}rich, 
Ramistrasse 101, Z{\"u}rich, Switzerland}

\author{Andrea Musso}
\address{D-GESS, ETH Z{\"u}rich, 
Ramistrasse 101, Z{\"u}rich, Switzerland}

\author{Andreas Wieser}
\address{Einstein Institute of Mathematics, Givat Ram, Hebrew University of Jerusalem,  Jerusalem, Israel}
\thanks{A.W. was supported by  ERC grant HomDyn, ID 833423, SNF grant 178958, and the SNF Doc. Mobility grant 195737.}





\begin{abstract}
To any $k$-dimensional subspace of $\Q^n$ one can naturally associate a point in the Grassmannian $\Grk(\R)$ and two shapes of lattices of rank $k$ and $n-k$ respectively. 
These lattices originate by intersecting the $k$-dimensional subspace and its orthogonal with the lattice $\Z^n$. Using unipotent dynamics we prove simultaneous equidistribution of all of these objects under congruence conditions when $(k,n) \neq (2,4)$.
\end{abstract}

\maketitle



\section{Introduction}

In this paper, we study the joint distribution of rational subspaces of a fixed discriminant (also called height by some authors) and of two naturally associated lattices: the integer lattice in the subspace and in its orthogonal complement together with some natural refinements.

Let $Q$ be a positive definite integral quadratic form on $\Q^n$ and let $L \in \Grk(\Q)$ be a rational $k$-dimensional subspace.
Here, $\Grk$ is the projective variety of $k$-dimensional subspaces of the $n$-dimensional linear space.
The discriminant $\disc_Q(L)$ of $L$ with respect to $Q$ is the discriminant of the restriction of $Q$ to the integer lattice $L(\Z) = L\cap \Z^n$. In a formula,
\begin{align*}
\disc_Q(L) = \det \begin{pmatrix}
\langle v_1,v_1\rangle_Q & \cdots & \langle v_1,v_k\rangle_Q \\
\vdots & & \vdots \\
\langle v_k,v_1\rangle_Q & \cdots & \langle v_k,v_k\rangle_Q
\end{pmatrix}
\end{align*}
where $\langle \cdot,\rangle_Q$ is the bilinear form induced by $Q$ and $v_1,\ldots,v_k$ is a basis of $L(\Z)$.
We consider the finite set
\begin{align*}
\HkQD := \{ L \in \Grk(\Q) : \disc_Q(L) = D \}.
\end{align*}

We attach to any $L \in \Grk(\Q)$ the restriction of $Q$ to $L(\Z)$ represented in a basis. This is an integral quadratic form in $k$-variables which is well-defined up to a change of basis i.e.~(in the language of quadratic forms) up to equivalence.
In particular, it defines a well-defined point -- also called the \emph{shape} of $L(\Z)$ --
\begin{align*}
[L(\Z)] \in \shapesp{k}
\end{align*}
where $\shapesp{k}$ is the space of positive definite real quadratic forms on $\R^n$ up to similarity (i.e.~up to equivalence and positive multiples).
We may identify $\shapesp{k}$ as
\begin{align*}
\shapesp{k} \simeq \lrquot{\mathrm{O}_k(\R)}{\PGL_k(\R)}{\PGL_k(\Z)}
\end{align*}
which in particular equips $\shapesp{k}$ with a probability measure $m_{\shapesp{k}}$ arising from the Haar measures of the groups on the right. We will simply call $m_{\shapesp{k}}$ the Haar probability measure on $\shapesp{k}$.

Analogously, one may define the point $[L^\perp(\Z)]\in \shapesp{n-k}$ where $L^\perp$ is the orthogonal complement of $L$ with respect to $Q$.
Overall, we obtain a triple of points $(L,[L(\Z)],[L^\perp(\Z)])$. The goal of this work is to study the distribution of these points in $\Grk(\R) \times \shapesp{k} \times \shapesp{n-k}$ as $\disc_Q(L)$ grows.
In what follows, $\Grk(\R)$ is given the unique $\SO_Q(\R)$-invariant probability measure $m_{\Grk(\R)}$.

\begin{Conj}\label{conj:ideal equidistribution result}
Let $k,n \in \N$ be integers such that $k \geq 2$ and $n-k \geq 2$.
Then the sets
\begin{align*}
\{(L,[L(\Z)],[L^\perp(\Z)]): L \in \HkQD\}
\end{align*}
equidistribute\footnote{Implicitly, we mean with respect to the product 'Haar' measure, i.e, the product measure $m_{\Grk(\R)} \otimes m_{\shapesp{k}} \otimes m_{\shapesp{n-k}}$.} 
in $\Grk(\R) \times \shapesp{k} \times \shapesp{n-k}$ as $D \to \infty$ along $D \in \N$ satisfying $\HkQD\neq \emptyset$.
\end{Conj}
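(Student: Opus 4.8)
\emph{Reformulation as $S$-arithmetic dynamics.} The plan is to transport the statement to a homogeneous space and to prove equidistribution by measure rigidity, using the discriminant condition to encode the arising adelic location data. Fix a reference subspace $L_0 \in \Grk(\Q)$ and set $\mathbf{H} = \Spin(Q|_{L_0}) \times \Spin(Q|_{L_0^{\perp}})$, a reductive $\Q$-subgroup of $\mathbf{G} = \Spin_Q$ whose two factors are the spin groups of the positive definite forms $Q|_{L_0}$ and $Q|_{L_0^{\perp}}$. Linear-algebra bookkeeping (using the identification $\shapesp{k} \simeq \mathrm{O}_k(\R)\backslash\PGL_k(\R)/\PGL_k(\Z)$ already recorded above, and its analogue for $n-k$) shows that a single periodic $\mathbf{H}(\A)$-orbit inside $\mathbf{G}(\Q)\backslash\mathbf{G}(\A)$ surjects onto the fibre of $\Grk(\R)\times\shapesp{k}\times\shapesp{n-k}\to\Grk(\R)$ over $L_0$, and that $\{(L,[L(\Z)],[L^{\perp}(\Z)]): L\in\HkQD\}$ is the image of a finite union of periodic $\mathbf{H}(\A)$-orbits — a "packet" — whose $p$-adic location is governed by the factorization of $D$. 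The conjecture then becomes the assertion that, as $D\to\infty$ with $\HkQD\ne\emptyset$, the packet measures converge weak-$*$ to the measure on $\mathbf{G}(\Q)\backslash\mathbf{G}(\A)$ whose pushforward is $m_{\Grk(\R)}\otimes m_{\shapesp{k}}\otimes m_{\shapesp{n-k}}$.

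\emph{The generic range $k,n-k\ge 3$.} Here each of $Q|_{L_0}$, $Q|_{L_0^{\perp}}$ is a form in at least three variables, hence isotropic over $\Qp$ for all but finitely many $p$ (Chevalley--Warning and Hensel), so $\mathbf{H}(\Qp)$ is generated by its $\Qp$-unipotent one-parameter subgroups for cofinitely many $p$, and $\mathbf{H}$ has no nontrivial $\Q$-rational characters. I would then apply the $S$-arithmetic measure-classification theorem for unipotent flows together with the nondivergence and linearization estimates of Dani--Margulis (in the $S$-arithmetic form due to Mozes--Shah, Eskin--Mozes--Shah, Benoist--Quint, Gorodnik--Oh): any weak-$*$ limit $\mu_{\infty}$ of packet measures is a probability measure carried by a single periodic orbit of a connected $\Q$-subgroup $\mathbf{L}$ with $\mathbf{H}\subseteq\mathbf{L}\subseteq\mathbf{G}$, and since the mass of $\HkQD$ grows polynomially in $D$ while each periodic $\mathbf{H}(\A)$-orbit has bounded volume, the number of orbits in the packet tends to infinity, forcing $\mathbf{L}\supsetneq\mathbf{H}$. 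The classification of connected reductive $\Q$-subgroups of $\SO_n$ (maximality of $\SO_k\times\SO_{n-k}$ for $k\ne n-k$, and of its normalizer for $k=n-k$) then gives $\mathbf{L}=\mathbf{G}$, so $\mu_{\infty}$ is a full $\mathbf{G}(\A)$-homogeneous measure. It remains to check that it is the one with the desired pushforward rather than one carried by a proper closed $\mathbf{G}(\Q)$-invariant set coming from a proper open compact subgroup; this orbit-location step succeeds once the $p$-adic orbits occurring in the packet are maximal, which congruence hypotheses on $D$ ensure.

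\emph{The mixed range: exactly one of $k,n-k$ equal to $2$.} If, say, $k=2$, then $\Spin(Q|_{L_0})$ is a one-dimensional torus with no unipotents in any completion, and the direction it governs — the shape $[L(\Z)]\in\shapesp{2}$, i.e.~a point on the modular curve, forming a sequence of Heegner-type points of growing discriminant — cannot be equidistributed by unipotent rigidity alone. I would feed in a Duke/Linnik-type equidistribution statement for these toral packets (subconvexity for the relevant $\GL_1$-twisted $\GL_2$ $L$-functions, in the range accessible via the work of Duke, Iwaniec and Michel--Venkatesh), and combine it with the unipotent rigidity coming from the remaining factor $\Spin(Q|_{L_0^{\perp}})$ — a form in $n-k\ge 3$ variables, hence still supplying $\Qp$-unipotents — through an entropy/joinings argument in the spirit of Einsiedler--Lindenstrauss, to obtain the joint statement; as before the purely dynamical part imposes congruence conditions via the $p$-adic orbit-location issue.

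\emph{The two main obstacles.} Two steps carry the real difficulty, and they are the reason the statement remains a conjecture. First, the pair $(k,n)=(2,4)$: now \emph{both} factors of $\mathbf{H}$ are one-dimensional tori, $\mathbf{G}$ is a $\Q$-form of $\SL_2\times\SL_2$, and there is no unipotent rigidity at all — the problem is intrinsically the joint equidistribution of two sequences of torus packets inside $\SL_2\times\SL_2$, which I expect to be reachable only through a subconvex bound for the arising $\GL_2\times\GL_2$ Rankin--Selberg $L$-functions (to control the cross-correlation of the two factors) or through the measure-classification of joinings; the subtlety is precisely that the two factors are a priori linked by the common discriminant $D$, so disentangling them is the crux. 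Second, removing the congruence conditions uniformly over \emph{all} $D$ with $\HkQD\ne\emptyset$: soft ergodic theory controls the $p$-adic orbits only for $D$ avoiding thin sets of residues, whereas for $D$ divisible by large prime powers (or sharing factors with $\disc Q$) the local orbits could a priori concentrate on proper subgroups, and excluding this requires a genuinely effective, polynomial rate — a subconvex bound for the associated automorphic $L$-functions inserted into a linearization argument in the style of Einsiedler--Margulis--Venkatesh. I expect this uniform treatment of the "bad primes" to be the harder of the two in general, with the $(2,4)$ case a self-contained but substantial problem in its own right.
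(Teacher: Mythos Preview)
The statement is labelled a \emph{conjecture} in the paper and is not proved there in full; what the paper establishes is Theorem~\ref{thm:jointwithshapes}, the conjecture under congruence conditions, unconditional only for $k,n-k\geq 5$. Your proposal correctly identifies this status and the two genuine obstructions (the pair $(k,n-k)=(2,2)$ and the removal of congruence conditions), so as a diagnosis it is accurate.

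Your dynamical setup, however, diverges from the paper's in a way that is not merely cosmetic and contains a gap. You place $\mathbf{H}$ inside $\mathbf{G}=\Spin_Q$ and assert that periodic $\mathbf{H}(\A)$-orbits in $\Spin_Q(\Q)\backslash\Spin_Q(\A)$ already surject onto the shape factors $\shapesp{k}\times\shapesp{n-k}$. They do not: this quotient records the Grassmannian direction and genus/spinor-genus data, but the shapes $[L(\Z)]$ and $[L^\perp(\Z)]$ live in quotients of $\PGL_k$ and $\PGL_{n-k}$ and are not coordinates on $\Spin_Q(\Q)\backslash\Spin_Q(\A)$. The paper therefore works in the product $\G=\Spin_Q\times\Pnk$ (or its Levi $\Spin_Q\times\SL_k\times\SL_{n-k}$), with a \emph{diagonally embedded} copy $\StDiag_L$ of the stabilizer (\S\ref{sec:stabilizers}); the extra factors are precisely what carry the two shapes (Lemma~\ref{lem:grids to shapes}). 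A second technical point: the identity component of the stabilizer of $L_0$ in $\Spin_Q$ is only isogenous, not isomorphic, to $\Spin(Q|_{L_0})\times\Spin(Q|_{L_0^\perp})$ (Remark~\ref{rem:isogenytostabprod}). Finally, with the correct ambient group one must also rule out intermediate groups coming from the unipotent radical of $\Pnk$ and, when $k=n-k$, cope with the possibility that the primitive parts $\tilde q_L$, $\tilde q_{L^\perp}$ have bounded discriminant even though $\disc_Q(L)\to\infty$ (\S\ref{SEC:quadratic_forms}, Proposition~\ref{Proposition: Bound on number of badly behaved subspace}); neither issue is visible in your formulation.

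For $k=2$, $n-k\geq 3$ your suggested route via entropy and an Einsiedler--Lindenstrauss joinings classification is considerably heavier than what the paper does. The paper combines two lighter ingredients: (i) the elementary fact that an ergodic system is disjoint from any trivial system (Lemma~\ref{lem:disjointnessfromtrivial}), which reduces joint equidistribution to equidistribution in each of the three factors separately; and (ii) Duke's theorem on the toral factor, used not only to equidistribute $[L(\Z)]\in\shapesp{2}$ but also, by averaging over the torus orbit, to show that \emph{most} planes in a packet contain no short vector and hence that the intermediate pointwise-stabilizer groups obstructing Gorodnik--Oh can be washed out (\S\ref{sec:firstfactor}). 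This disjointness trick is one of the paper's advertised contributions (Remark~\ref{rem:disjointness}).
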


\begin{remark}\label{rem:AES}
There exists an analogous conjecture for $k=1$, $n-k \geq 2$ where one only considers the pairs $(L,[L^\perp(\Z)])$ (and similarly for $n-k=1$, $k \geq 2$).
This has been studied extensively by the first named author with Einsiedler and Shapira in \cite{AES-dim3,AES-higherdim} where the conjecture is settled for $n \geq 6$ (i.e.~$n-k \geq 5$), for $n = 4,5$ under a weak congruence condition and for $n=3$ under a stronger congruence condition on $D$.
We remark that, as it is written, \cite{AES-dim3,AES-higherdim} treat only the case where $Q$ is the sum of squares (that we will sometimes call the standard form), but the arguments carry over without major difficulties.
Using effective methods from homogeneous dynamics, Einsiedler, R\"uhr and Wirth \cite{Shapes-effective} proved an effective version of the conjecture when $n=4,5$ removing in particular all congruence conditions.
The case $n=3$ relies on a deep classification theorem for joinings by Einsiedler and Lindenstrauss \cite{joiningsfinal}; effective versions of that theorem are well out of reach of current methods from homogeneous dynamics.
Assuming the Generalized Riemann Hypothesis, Blomer and Brumley \cite{BlomerBrumley} have recently removed the congruence condition in \cite{AES-dim3}.
\end{remark}

\begin{remark}\label{rem:2in4}
The case $k=2$ and $n-k=2$ of Conjecture \ref{conj:ideal equidistribution result} has been settled in \cite{2in4} by the first and the last named author together with Einsiedler under a (relatively strong) congruence condition when $Q$ is the sum of four squares.
The result in the paper is in fact stronger as it considers two additional shapes that one can naturally associate to $L$ essentially thanks to the local isomorphism between $\SO_4(\R)$ and $\SO_3(\R) \times \SO_3(\R)$. The arguments carry over without major difficulties to consider norm forms on quaternion algebras (equivalently, the forms $Q$ for which $\disc(Q)$ is a square in $\Q^\times$). In \cite{2in4General}, the first and last named author will extend the results of \cite{2in4} to treat arbitrary quadratic forms.
\end{remark}

In this article, we prove Conjecture~\ref{conj:ideal equidistribution result} in the remaining cases, partially under congruence conditions.
For integers $D,\ell$ we write $\kfree{D}{\ell}$ for the $\ell$-power free part of $D$ i.e.~the largest divisor $d$ of $D$ with $a^\ell \nmid d$ for any $a>1$.

\begin{Thm}[Equidistribution of subspaces and shapes]\label{thm:jointwithshapes}
Let $2 \leq k \leq n$ be integers with $k \leq n-k$ and $n-k >3$, and let $p$ be an odd prime with $p \nmid\disc(Q)$. 
Let $D_i\in \N$ be a sequence of integers with $\kfree{D_i}{k}\to \infty$ and $\mathcal{H}_{Q}^{n,k}(D_i)\neq \emptyset$ for every $i$.
Then the sets
\begin{align*}
\{(L,[L(\Z)],[L^\perp(\Z)]): L \in \mathcal{H}_{Q}^{n,k}(D_i)\}
\end{align*}
equidistribute in $\Grk(\R) \times \shapesp{k} \times \shapesp{n-k}$ as $i \to \infty$ assuming the following conditions:
\begin{itemize}
\item $p \nmid D_i$ if $k \in \{3,4\}$.
\item $-D_i \mod p$ is a square in $\mathbb{F}_p^\times$ if $k=2$.
\end{itemize}
Moreover, the analogous statement holds when the roles of $k$ and $n-k$ are reversed.
\end{Thm}

\begin{remark}
Maass \cite{Maass56,Maass59} in the 60's and Schmidt \cite{Schmidt-shapes} in the 90's have considered problems of this kind. They prove that the set of pairs $(L,[L(\Z)])$ equidistributes in $\Grk(\R) \times \shapesp{k}$ where $L\in \Grk(\Q)$ varies over the rational subspaces with discriminant \emph{at most} $D$. 
In this averaged setup, Horesh and Karasik \cite{HoreshKarasik} recently verified Conjecture \ref{conj:ideal equidistribution result}.
Indeed, their version is polynomially effective in $D$.
\end{remark}

\begin{remark}[Congruence conditions]\label{rem:congcond}
As in the previous works referenced in Remarks \ref{rem:AES} and \ref{rem:2in4}, our proof is of dynamical nature and follows from an equidistribution result for certain orbits in an adelic homogeneous space.
The congruence conditions at the prime $p$ assert roughly speaking that one can use non-trivial dynamics at one fixed place for all $D$.
The acting groups we consider here are (variations of) the $\Q_p$-points of
\begin{align*}
\mathbf{H}_L= \{g \in \SO_Q: g.L \subset L \}^\circ
\end{align*}
for $L \in \Grk(\Q)$.
In particular, the cases $k=2$ and $k>2$ are very different from a dynamical viewpoint:
\begin{itemize}
\item For $k>2$ the group $\mathbf{H}_L$ is semisimple. The knowledge about measures on homogeneous spaces invariant under unipotents is vast -- see Ratner's seminal works \cite{ratner91-measure,ratner-p-adic}. In our situation, we use an $S$-arithmetic version of a theorem by Mozes and Shah \cite{mozesshah}, proven by Gorodnik and Oh \cite{gorodnikoh}, which describes weak${}^\ast$-limits of measures with invariance under a semisimple group. 
Roughly speaking, the theorem implies that any sequence of orbits under a semisimple subgroup is either equidistributed or sits (up to a small shift) inside an orbit of a larger subgroup.
The flexibility that this method provides allows us to in fact prove a significantly stronger result; see Theorem~\ref{thm:equidistrmoduli} below.
\item For $k=2$ and $n-k \geq 3$, the group $\mathbf{H}_L$ is reductive. Thus, one can apply the results mentioned in the previous bullet point only to the commutator subgroup of $\mathbf{H}_L$ which is non-maximal and has intermediate subgroups.
\end{itemize}
One of the novelties of this article is a treatment of this reductive case where we use additional invariance under the center to rule out intermediate subgroups 'on average' (see \S\ref{sec:firstfactor}). 
Here as well as for the second component of the triples in Theorem~\ref{thm:jointwithshapes} we need equidistribution of certain adelic torus orbits; this is a generalized version of a theorem of Duke \cite{duke88} building on a breakthrough of Iwaniec \cite{iwaniecforduke} -- see for example \cite{Dukeforcubic,harcosmichelII,W-linnik}.
Furthermore, to prove simultaneous equidistribution of the tuples in Theorem~\ref{thm:jointwithshapes}
we apply a new simple disjointness trick -- see the following remark.
\end{remark}

\begin{remark}[Disjointness]\label{rem:disjointness}
In upcoming work, the first and last named author prove together with Einsiedler, Luethi and Michel \cite{EffDisjoint} an effective version of Conjecture \ref{conj:ideal equidistribution result} when $k\neq 2$.
This removes in particular the congruence conditions. The technique consists of a method to 'bootstrap' effective equidistribution in the individual factors to simultaneous effective equidistribution (in some situations).

In the current article, we use an ineffective analogue of this to prove Theorem~\ref{thm:jointwithshapes}, namely, the very well-known fact that mixing systems are disjoint from trivial systems (see also Lemma~\ref{lem:disjointnessfromtrivial}).
This simple trick has (to our knowledge) not yet appeared in the literature in a similar context.
It is particularly useful when $k=2$ and $n-k\geq 3$ in which case we cannot rely solely on methods from unipotent dynamics (see Remark~\ref{rem:congcond}).
\end{remark}

\begin{remark}[On the power assumption]\label{rem:powerfree}
The assumption in Theorem~\ref{thm:jointwithshapes} toward the power free part of the discriminants should only be considered a simplifying assumption. Its purpose is automatically to rule out situations where for most subspaces $L \in \HkQD$ the quadratic form $Q|_{L(\Z)}$ (or $Q|_{L^\perp(\Z)}$) is highly imprimitive (i.e. a multiple of a quadratic form of very small discriminant).
We expect that such discriminants do not exist regardless of their factorization.
A conjecture in this spirit is phrased in Appendix~\ref{sec:appendixprimitive}.
Moreover, Schmidt's work \cite{Schmidt-count} suggests that $|\HkQD| = D^{\frac{n}{2}-1+o(1)}$ in which case one could remove the assumption $\kfree{D_i}{k}\to \infty$ in Theorem~\ref{thm:jointwithshapes}.
\end{remark}

\subsection{A strengthening}\label{sec:modulispacetheorem}

In the following we present a strengthening of Conjecture~\ref{conj:ideal equidistribution result} inspired by the notion of grids introduced in \cite{AES-higherdim} and by Bersudsky's construction of a moduli space \cite{BersudskyModuli} which refines the results of \cite{AES-higherdim}.

Consider the set of pairs $(L,\Lambda)$ where $L \subset \R^n$ is a $k$-dimensional subspace and where $\Lambda \subset \R^n$ is a lattice of full rank with the property that $L \cap \Lambda$ is a lattice in $L$ ($L$ is \emph{$\Lambda$-rational}). We define an equivalence relation on these pairs by setting $(L,\Lambda)\sim (L',\Lambda')$ whenever the following conditions are satisfied:
\begin{enumerate}
\item $L=L'$,
\item There exists $g \in \GL_n(\R)$ with $\det(g) > 0$ such that $g$ acts on $L$ and $L^{\perp}$ as scalar multiplication and $g\Lambda = \Lambda'$.
\end{enumerate}
We write $[L,\Lambda]$ for the class of $(L,\Lambda)$; elements of such a class are said to be \emph{homothetic along $L$ or $L$-homothetic} to $(L,\Lambda)$. 
We refer to the set $\Grids$ of such equivalence classes as the \emph{moduli space of basis extensions}.
Indeed, one can think of a lattice $\Lambda$ such that $L\cap \Lambda$ is a lattice as one choice of complementing the lattice $L \cap \Lambda$ into a basis of $\R^n$.
The equivalence relation is not very transparent in this viewpoint, see Section~\ref{sec:moduli} for further discussion.

The moduli space $\Grids$ is designed to incorporate subspaces as well as both shapes. Clearly, we have a well-defined map
\begin{align}\label{eq:modulitoGr}
[L,\Lambda] \in \Grids	\mapsto L \in \Grk(\R)
\end{align}
The restriction of $Q$ to $L \cap \Lambda$ yields a well-defined element of $\shapesp{k}$.
Similarly, one may check that $L^\perp$ intersects the dual lattice $\Lambda^\#$ in a lattice, the second shape is given by the restriction of $Q$ to $L^\perp \cap \Lambda^\#$.

We note that there is a natural identification of $\Grids$ with a double quotient of a Lie group (cf.~Lemma~\ref{lem:ident for grids}) so that we may again speak of the 'Haar measure' on $\Grids$.

\begin{Conj}\label{conj:equidistrmoduli}
Let $k,n \in \N$ be integers such that $k\geq 3$ and $n-k \geq 3$. Then the sets
\begin{align*}
\{([L,\Z^n]: L \in \HkQD\} \subset \Grids
\end{align*}
equidistribute with respect to the Haar measure as $D \to \infty$ along $D \in \N$ satisfying $\HkQD\neq \emptyset$.
\end{Conj}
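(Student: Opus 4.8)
The plan is to derive the statement from an equidistribution theorem for adelic orbits, exactly as in the proofs of Theorem~\ref{thm:jointwithshapes} and of the stronger Theorem~\ref{thm:equidistrmoduli}. Using the identification of $\Grids(\R)$ with a double quotient of a Lie group (Lemma~\ref{lem:ident for grids}), which rests on Bersudsky's construction \cite{BersudskyModuli}, one realises the map $L\mapsto[L,\Z^n]$ as induced from the action of $\SO_Q(\Q)$ on a finite-level quotient of a real Lie group, and lifts the picture to a homogeneous space $Y=\mathbf{G}(\Q)\backslash\mathbf{G}(\A)$ for a $\Q$-group $\mathbf{G}$ closely related to $\SO_Q$. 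For each admissible $D$ choose a base subspace $L_0=L_0(D)$ of discriminant $D$ and set $\mathbf{H}_{L_0}=\{g\in\SO_Q:g.L_0=L_0\}$; since $k\geq 3$ and $n-k\geq 3$, its identity component is the semisimple group $\SO(Q|_{L_0})\times\SO(Q|_{L_0^{\perp}})$ acting block-diagonally on $L_0\oplus L_0^{\perp}=\Q^n$. A standard argument (strong approximation together with the parametrisation of $\HkQD$ by the relevant class set) identifies $\{[L,\Z^n]:L\in\HkQD\}$ with the image in $\Grids(\R)$ of a finite union of $\mathbf{H}_{L_0}(\A_f)$-orbits in $Y$; let $\mu_D$ denote the associated normalised orbit measure.

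The decisive feature is that $Q$ is positive definite, so $\mathbf{H}_{L_0}(\R)$ is \emph{compact}: all dynamics come from the finite places, and one needs a prime $p$ at which $\mathbf{H}_{L_0}(\Qp)$ — equivalently $\SO(Q|_{L_0})(\Qp)$ or $\SO(Q|_{L_0^{\perp}})(\Qp)$ — is non-compact, i.e.\ at which one of these forms is isotropic over $\Qp$. If $k\geq 5$ and $n-k\geq 5$ both forms have at least five variables, hence are isotropic over every $\Qp$; one takes any prime $p$, and the argument below proves the conjecture unconditionally in that range. When $k$ or $n-k$ lies in $\{3,4\}$ such a prime need not exist for a given $D$, and one imposes a congruence condition to force it: for instance an odd prime $p\nmid\disc(Q)$ with $p\nmid D$, so that $Q|_{L_0(\Z)}\bmod p$ is non-degenerate in at least three variables over $\F_p$ and therefore isotropic, whence $\SO(Q|_{L_0})(\Qp)$ is non-compact. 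This is exactly the hypothesis of Theorem~\ref{thm:jointwithshapes} and yields the conditional Theorem~\ref{thm:equidistrmoduli}.

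Given a non-compact local factor, the $\mu_D$ are invariant under a non-compact subgroup with semisimple Zariski closure, and one applies the $S$-arithmetic version of the Mozes--Shah theorem \cite{mozesshah} due to Gorodnik--Oh \cite{gorodnikoh}: every weak$^*$-limit $\mu_\infty$ of the $\mu_D$ is the Haar measure on a closed orbit of a $\Q$-subgroup $\mathbf{M}$ containing a conjugate of $\mathbf{H}_{L_0}^{\circ}$. One then rules out proper $\mathbf{M}$. Because $\SO(Q|_{L_0})\times\SO(Q|_{L_0^{\perp}})$ acts irreducibly on each of the two blocks of the standard representation — which are its only invariant subspaces — the subgroups containing a conjugate of $\mathbf{H}_{L_0}^{\circ}$ are tightly restricted, and the remaining intermediate, bounded-volume possibilities are excluded by a standard non-escape-of-mass argument using that the volumes of the orbits supporting $\mu_D$ tend to infinity as $\disc_Q(L_0)=D\to\infty$. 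Hence $\mathbf{M}=\mathbf{G}$, every $\mu_\infty$ is Haar on $Y$, and — applying this to each orbit in the finite union forming the $D$-th packet and pushing forward to $\Grids(\R)$, where Haar maps to Haar by the description in Section~\ref{sec:moduli} — one obtains the claimed equidistribution.

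The main obstacle to the \emph{unconditional} conjecture is precisely this choice of a non-compact local factor when $3\leq k\leq 4$ or $3\leq n-k\leq 4$: for the discriminants $D$ at which $\mathbf{H}_{L_0}$ is anisotropic at every place, unipotent dynamics gives no information at all, and one must substitute an effective or spectral input — of the kind developed in \cite{EffDisjoint} to remove the congruence condition from Conjecture~\ref{conj:ideal equidistribution result} — combining effective equidistribution in the individual factors ($\Grk(\R)$ and the two shape spaces) into joint equidistribution in $\Grids(\R)$ via the disjointness mechanism of Remark~\ref{rem:disjointness}. A secondary, purely technical point, already needed for the conditional statement, is to check that the grid coordinate of $\Grids(\R)$ — the part invisible to the triple $(L,[L(\Z)],[L^{\perp}(\Z)])$ — is genuinely recorded by the $\mathbf{G}(\A)$-orbit, so that Haar on $Y$ really does project onto Haar on $\Grids(\R)$.
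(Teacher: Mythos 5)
You are attempting a statement that the paper itself leaves open: Conjecture~\ref{conj:equidistrmoduli} is only proved there in the range covered by Theorem~\ref{thm:equidistrmoduli} (congruence condition at a fixed odd prime $p\nmid\disc(Q)$ when $k$ or $n-k$ lies in $\{3,4\}$, unconditional for $k,n-k\geq 5$), and your proposal honestly proves no more than that, for the same reason (isotropy of $Q|_{L}$ and $Q|_{L^\perp}$ at some fixed place is needed to run unipotent dynamics). Up to that caveat your outline of the first factor — Gorodnik--Oh applied to the stabilizer, maximality of $\StSpin_L$ in $\Spin_Q$, and the discriminant $D\to\infty$ ruling out the case where the limit group is a conjugate of the stabilizer — does match the paper's Proposition~\ref{prop:projontofactors of Levi}.

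The genuine gap is in how you treat the moduli space itself. You set up the dynamics inside a group ``closely related to $\SO_Q$'' with acting group $\mathbf{H}_{L_0}\subset\SO_Q$, and relegate the question of whether the grid coordinate is ``genuinely recorded by the orbit'' to a secondary technical point. It is not: by Lemma~\ref{lem:ident for grids}, $\Grids(\R)$ is a double quotient of $\G(\R)=\Spin_Q(\R)\times\Pnk(\R)$, and the orbits whose limits one must classify live in $\rquot{\SG(\A)}{\SG(\Q)}$ with $\SG=\Spin_Q\times\SPnk$, the acting group being the \emph{graph} $\StDiag_L$ of the stabilizer's action on a basis of $\Lambda_L$ (Theorem~\ref{thm:dyn-simplyconnected}). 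Your argument for ruling out intermediate groups (irreducibility of the blocks plus growth of orbit volumes) only constrains the $\Spin_Q$-factor, i.e.\ it yields equidistribution of the subspaces $L$ in $\Grk(\R)$, not of $[L,\Z^n]$ in $\Grids(\R)$. In the product, intermediate groups such as $\Spin_Q\times(\text{conjugate of }\SO_{q_L})\times\cdots$, graphs of isomorphisms $\SL_k\to\SL_{n-k}$ when $k=n-k$, and groups containing only a conjugate of the Levi $\SDnk$ inside $\SPnk$ must all be excluded. Excluding the orthogonal-group options requires $\disc(\tilde{q}_{L_i})\to\infty$ and $\disc(\tilde{q}_{L_i^\perp})\to\infty$, and this genuinely fails for some sequences with $\disc_Q(L_i)\to\infty$ when $k=n-k$ (Example~\ref{ex:non-prim}); the paper has to develop the glue-group and primitive-form comparisons of \S\ref{SEC:quadratic_forms}, bound the number of such ``bad'' subspaces (Proposition~\ref{Proposition: Bound on number of badly behaved subspace}), and wash them out in the final convex-combination argument of \S\ref{sec:proof arithmetic}. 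The unipotent radical of $\SPnk$ then needs its own step (maximality of $\SDnk$ in $\SPnk$ together with a discriminant computation on wedge products). None of this is present in, or recoverable from, your sketch, and the ``non-escape-of-mass via growing volume'' mechanism you invoke is not what performs these exclusions; so as written the proposal establishes at best equidistribution of the image of $\{[L,\Z^n]\}$ under the projection to $\Grk(\R)$, not the statement about $\Grids(\R)$.
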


\begin{remark}[From Conjecture~\ref{conj:equidistrmoduli} to Conjecture~\ref{conj:ideal equidistribution result}]\label{rem:comparisonconj}
When $Q$ is unimodular (i.e.~$\disc(Q) = 1$), Conjecture~\ref{conj:equidistrmoduli} implies Conjecture~\ref{conj:ideal equidistribution result}. Otherwise, Conjecture~\ref{conj:equidistrmoduli} implies equidistribution of the triples $(L,[L(\Z)],[L^\perp \cap (\Z^n)^\#])$ where $(\Z^n)^\#$ is the dual lattice to $\Z^n$ under the quadratic form $Q$:
\begin{align*}
(\Z^n)^\# = \{ x \in \Q^n: \langle x,y \rangle_Q \in \Z\text{ for all }y \in \Z^n \}.
\end{align*}
This is not significantly different as the lattice $L^\perp \cap (\Z^n)^\#$ contains $L^\perp \cap \Z^n$ with index at most $\disc(Q)$;
it is nevertheless insufficient to deduce Conjecture~\ref{conj:ideal equidistribution result}. In \S\ref{sec:moduli} we introduce tuples $[L,\Lambda_L]$ which satisfy an analogue of Conjecture~\ref{conj:equidistrmoduli}; this adapted conjecture implies Conjecture~\ref{conj:ideal equidistribution result}
\end{remark}

We prove the following towards Conjecture~\ref{conj:equidistrmoduli}.

\begin{Thm}\label{thm:equidistrmoduli}
Let $k,n$ be integers with $3 \leq k \leq n-k$ and let $p$ be an odd prime with $p \nmid\disc(Q)$. 
Let $D_i\in \N$ be a sequence of integers with $\kfree{D_i}{k}\to \infty$ and $\mathcal{H}_{Q}^{n,k}(D_i)\neq \emptyset$ for every $i$.
Then the sets
\begin{align*}
\{([L,\Z^n]: L \in \mathcal{H}_{Q}^{n,k}(D_i)\}
\end{align*}
equidistribute in $\Grids$ as $i \to \infty$ assuming in addition that $p \nmid D_i$ if $k \in \{3,4\}$.
\end{Thm}

\begin{remark}\label{rem:gridsIlya}
As mentioned in Remark~\ref{rem:congcond}, the assumption $k\geq 3$ and $n-k \geq 3$ asserts that the acting group underlying the problem is semisimple.
There are instances where one could overcome this obstacle: Khayutin \cite{Khayutin-kugasato} proves equidistribution of grids 
when $(k,n) = (1,3)$ as conjectured in \cite{AES-higherdim} using techniques from geometric invariant theory.
\end{remark}

\subsection{Further refinements and questions}
For an integral quadratic form $q$ in $k$ variables a \emph{primitive representation} of $q$ by $Q$ is a $\Z$-linear map $\iota: \Z^k \to \Z^n$ such that $Q(\iota(v)) = q(v)$ for all $v \in \Z^k$ and such that $\Q\iota(\Z^k) \cap \Q^n = \iota(\Z^k)$.
One can identify primitive representations of $q$ with subspaces $L \in\Grk(\Q)$ such that $Q|_{L(\Z)}$ is equivalent to $q$.
Given this definition, one could ask about the distribution of the pairs
\begin{align}\label{eq:EVpairs}
\{(L,[L^\perp(\Z)]):L \in \Grk(\Q) \text{ and } Q|_{L(\Z)} \text{ is equivalent to }q\}
\end{align}
inside $\Grk(\R)\times \shapesp{n-k}$ when $\disc(q) \to \infty$. The condition $\disc(q) \to \infty$ here is not sufficient; for instance, when $q$ represents $1$ and $Q$ represents $1$ only on, say, $\pm v \in \Z^n$ then any primitive representation of $q$ by $Q$ must contain $\pm v$. However, the subspaces in $\Grk(\R)$ containing $\pm v$ form a Zariski closed subset.
Assuming that the minimal value represented by $q$ goes to infinity, the above question is very strongly related to results of Ellenberg and Venkatesh \cite{localglobalEV} as are indeed our techniques in this article.
In principle, these techniques should apply to show that under congruence conditions as in Theorems~\ref{thm:jointwithshapes} and~\ref{thm:equidistrmoduli} the pairs in \eqref{eq:EVpairs} are equidistributed when $q_i$ is a sequence of quadratic forms primitively representable by $Q$ whose minimal values tend to infinity.

As alluded to in Remark~\ref{rem:gridsIlya} it would be interesting to know if Khayutin's technique applies to show the analogue of Theorem~\ref{thm:equidistrmoduli} when, say, $(k,n) = (2,5),(2,4)$.
The two cases are from a dynamical perspective quite different as noted in Remark~\ref{rem:congcond}.

Furthermore, we note that this paper has various clear directions of possible generalization.
Most notably, this paper can be extended to indefinite forms.
Let $Q$ be an indefinite integral quadratic form on $\Q^n$ of signature $(r,s)$.
Here, we observe that $\SO_Q(\R)$ does not act transitively on $\Grk(\R)$. Indeed, the degenerate subspaces form a Zariski closed subset (the equation being $\disc(Q|_L) = 0$).
The complement is a disjoint union of finitely many open sets on which $\SO_Q(\R)$ acts transitively; for each tuple $(r',s')$ with $r'+s'= k$ and $r' \leq r,\ s' \leq s$ such an open set is given by the subspaces $L$ for which $Q|_L$ has signature $(r',s')$.
The analogue of the above conjectures and theorems can then be formulated by replacing $\Grk(\R)$ with one of these open sets.
The proofs generalize without major difficulties to this case; we refrain from doing so here for simplicity of the exposition.
Other directions of generalization include the number field case which is not addressed in any of the works prior to this article and is hence interesting in other dimensions as well.
\bigskip

\textbf{Acknowledgments}
The authors would like to thank Michael Bersudsky, Manfred Einsiedler and Manuel Luethi for useful discussions. 
We are also thankful towards the anonymous referee who made various valuable suggestions towards improving the exposition.

\subsection{Organization of the paper}\label{sec:organization}
This article consists of two parts.
In Part $1$ -- the 'dynamical' part -- we establish the necessary results concerning equidistribution of certain adelic orbits.
It is structured as follows:
\begin{itemize}
\item In \S\ref{SEC:stabilisers}, we prove various results concerning stabilizer subgroups of subspaces.
\item In \S\ref{sec:dimatleast4}, we prove the homogeneous analogue of Theorem~\ref{thm:equidistrmoduli}.
The key ingredient of our proof is an $S$-arithmetic extension of a theorem of Mozes and Shah \cite{mozesshah} proven by Gorodonik and Oh \cite{gorodnikoh}. The arguments used in this section only work when the dimension and codimension (i.e. $k$ and $n-k$) are at least 3.
\item[$\bullet$] In \S\ref{SEC: two_dim_thm}, we prove the homogeneous analogue of Theorem~\ref{thm:jointwithshapes} for two dimensional subspaces (i.e.~for $k=2$). 
Contrary to the case of dimension and codimension at least 3, the groups whose dynamics we use are not semisimple (see Remark~\ref{rem:congcond}). In particular, the theorem of Gorodonik and Oh \cite{gorodnikoh} is not sufficient and more subtle arguments, relying on Duke's Theorem \cite{duke88} and the trick mentioned in Remark~\ref{rem:disjointness}, are required.
\end{itemize}
In Part $2$, we deduce Theorems~\ref{thm:jointwithshapes} and \ref{thm:equidistrmoduli} from the homogeneous dynamics results proven in \S\ref{sec:dimatleast4} ($k>2$) and \S\ref{SEC: two_dim_thm} ($k=2$) of the first part. More precisely, it is structured as follows:
\begin{enumerate}
\item[$\bullet$] In \S\ref{SEC:quadratic_forms}, we prove that the discriminant of the orthogonal complement of a subspace is equal to the discriminant of the subspace up to an essentially negligible factor.
\item[$\bullet$] In \S\ref{sec:moduli},
we study the moduli space of base extensions and show that it surjects onto $\Grk(\R) \times \shapesp{k} \times \shapesp{n-k}$.
From this, we prove that a slight strengthening of Theorem~\ref{thm:equidistrmoduli} implies Theorem~\ref{thm:jointwithshapes}.
In these considerations, it is useful to consider subspaces together with an orientation.
\item[$\bullet$] In \S\ref{sec:proof arithmetic}, we finally establish Theorems~\ref{thm:jointwithshapes} and \ref{thm:equidistrmoduli}.
The technique here is by now standard -- we interpret the sets in Theorem~\ref{thm:equidistrmoduli} as projections of the adelic orbits in Part $1$ (or a slight adaptation thereof).
\end{enumerate}
In the appendix, we establish various complementary facts.
\begin{enumerate}
\item[$\bullet$] In Appendix~\ref{sec:appendix}, we discuss non-emptiness conditions for the set $\HkQD$ when the quadratic form $Q$ is the sum of squares. In particular, we prove that $\HkQD \neq \emptyset$ for all $n \geq 5$. The techniques here are completely elementary and we do not provide any counting results.
\item[$\bullet$]
In Appendix~\ref{sec:appendixprimitive}, we prove various facts complementing the discussion in \S\ref{SEC:quadratic_forms}. For instance, we prove that if $L \in \Gr{k}(\Q)$ is a subspace where $k < n-k$ then the quadratic form on the orthogonal complement $Q|_{L^\perp(\Z)}$ is primitive up to negligible factors.
\end{enumerate}

\subsection{Notation}
Let $V_\Q$ be the set of places of $\Q$ and denote by $\Q_v$ for any $v \in V_\Q$ the completion at $v$.
Given a subset $S \subset V_\Q$ we define the ring $\Q_S$ to be the restricted direct product of $\Qp$ for $p \in S$ with respect to the subgroups $\Zp$ for $p \in S\setminus\{\infty\}$.  
Moreover, we set $\Z^S := \Z[\frac{1}{p} : p \in S\setminus \{\infty\} \}$.
When $S = V_\Q$ we denote $\Q_S$ by $\A$ and call it the ring of \emph{adeles}.
When instead $S= V_\Q \setminus \{\infty\}$ we denote $\Q_S$ by $\A_f$ and call it the ring of \emph{finite adeles}. Finally, we let $\hat{\Z} = \prod_{p \in V_\Q \setminus \{\infty\}} \Zp$.

Let $\G< \SL_N$ be a connected algebraic group defined over $\Q$.
We identify $\G(\Z^S) = \G(\Q_S) \cap \SL_N(\Z^S)$ with its diagonally embedded copy in $\G(\Q_S)$.
If $\G$ has no non-trivial $\Q$-characters (for instance when the radical of $\G$ is unipotent), the Borel-Harish-Chandra Theorem (see  \cite[Thm.~5.5]{platonov}) yields that $\G(\Z^S)$ is a lattice in $\G(\Q_S)$ whenever $\infty \in S$. In particular, the quotient $\G(\Q_S)/\G(\Z^S)$ is a finite volume homogeneous space.
For $g \in \G(\Q_S)$ and $v \in S$, $g_v$ denotes the $v$-adic component of $g$.

Whenever $\G$ is semisimple, we denote by $\G(\Q_S)^+$ the image of the simply connected cover in $\G(\Q_S)$ (somewhat informally, this can be thought of as the part of $\G(\Q_S)$ which is generated by unipotents).

\subsubsection{Quadratic forms}\label{sec:quadratic forms}
In this whole article, $(V,Q)$ is a fixed non-degenerate quadratic space over $\Q$ of dimension $n$.
The induced bilinear form is denoted by $\langle \cdot,\cdot\rangle_Q$.
We assume throughout that $(V,Q)$ is positive definite.
We also identify $V$ with $\Q^n$ and suppose that $\langle \cdot,\cdot\rangle_Q$ takes integral values on $\Z^n \times \Z^n$ in which case we say that $Q$ is \emph{integral}.
Equivalently, the matrix representation $M_Q$ in the standard basis of $\Z^n$ has integral entries.

We denote by $\Orth_Q$ resp.~$\SO_Q$ the orthogonal resp.~special orthogonal group for $Q$.
Recall that $\SO_Q$ is abelian if $\dim(V) = 2$ and semisimple otherwise.
We denote by $\Spin_Q$ the spin group for $Q$ which is the simply connected cover of $\SO_Q$ if $\dim(V) >2$.
Explicitly, the spin group may be constructed from the Clifford algebra of $Q$. 
We remark that this article contains certain technicalities that will use the Clifford algebra --
we refer to \cite{knus} for a thorough discussion.
The spin group comes with an isogeny of $\Q$-groups $\isog: \Spin_Q \to \SO_Q$ which satisfies that for any field $K$ of characteristic zero we have an exact sequence (cf.~\cite[p.~64]{knus})
\[ \Spin_Q(K) \rightarrow \SO_Q(K) \rightarrow K^{\times}/ (K^{\times})^2. \]
where the second homomorphism is given by the spinor norm.
The isogeny $\isog$ induces an integral structure on $\Spin_Q$. For instance, $\Spin_Q(\Z)$ consists of elements $g \in \Spin_Q(\Q)$ for which $\isog(g) \in \SO_Q(\Z)$.
To simplify notation, we will write $g.v$ for the action of $\Spin_Q$ on a vector in $n$-dimensional linear space. Here, the action is naturally induced by the isogeny $\isog$ (and the standard representation of $\SO_Q$).

Furthermore, we let $\Grk$ denote the Grassmannian of $k$-dimensional subspaces of~$V$. 
Note that this is a homogeneous variety for $\SO_Q$ and (through the isogeny $\isog$) also for $\Spin_Q$.
If we assume that $Q$ is positive definite (as we always do), the action of $\SO_Q(\R)$ on $\Grk(\R)$ is transitive. Furthermore, in this case the spinor norm on $\SO_Q(\R)$ takes only positive values so that $\Spin_Q(\R)$ surjects onto $\SO_Q(\R)$ and in particular also acts transitively.


We denote the standard positive definite form (i.e.~the sum of $n$ squares) by $Q_0$ and write $\SO_n$ for its special orthogonal group. As $Q_0$ and $Q$ have the same signature, there exists $\gQ \in \GL_n(\R)$ with $\det(\gQ) > 0$ such that $\gQ^t \gQ = M_Q$ or equivalently
\begin{align}\label{eq:defetaQ}
Q_0(\gQ x) = Q(x)
\end{align}
holds for all $x \in \R^n$ (similarly for the induced bilinear forms). 
In particular, $\gQ$ maps pairs of vectors in $V$ which are orthogonal with respect to $Q$ onto pairs of vectors which are orthogonal with respect to $Q_0$.
Also, $\gQ^{-1} \SO_n(\R) \gQ = \SO_Q(\R)$.

\subsubsection{Quadratic forms on sublattices and discriminants}\label{sec:notation-induced qf}

For any finitely generated $\Z$-lattice $\Gamma < \Q^n$ (of arbitrary rank) the restriction of $Q$ to $\Gamma$ induces a quadratic form.
We denote by $q_\Gamma$ the representation of this form in a choice of basis of $\Gamma$.
Hence, $q_\Gamma$ is well-defined up to equivalence (and not proper equivalence) of quadratic forms (i.e.~up to change of basis).

If $\Gamma < \Z^n$, $q_\Gamma$ is an integral quadratic form and we denote by $\gcd(q_\Gamma)$ the greatest common divisor of its coefficients (which is independent of the choice of basis). Note that $\gcd(q_\Gamma)$ is sometimes also referred to as the content of $q_\Gamma$.
We write $\tilde{q}_\Gamma = \frac{1}{\gcd(q_\Gamma)}q_\Gamma$ for the primitive multiple of $q_\Gamma$.
If $L \subset \Q^n$ is a subspace, we sometimes write $q_L$ instead of $q_{L(\Z)}$ for simplicity.

The discriminant $\disc_Q(\Gamma)$ of a finitely generated $\Z$-lattice $\Gamma< \Q^n$ is the discriminant of $q_\Gamma$. 
As at the beginning of the introduction, we write $\disc_Q(L)$ instead of $\disc_Q(L(\Z))$ for any subspace $L \subset \Q^n$.
Given a prime $p$ we also define
\begin{align}\label{eq:def localdisc}
\disc_{p,Q}(L) =  \disc(Q|_{L(\Z_p)}) \in \Z_p/(\Z_p^\times)^2 
\end{align}
where $L(\Z_p) = L(\Q_p)\cap \Z_p^n$. We have the following useful identity
\begin{align}\label{eq:disc loctoglob}
\disc_Q(L) = \prod_{p} p^{\nu_p(\disc_{p,Q}(L))} 
\end{align}
where the product is taken over all primes $p$ and $\nu_p$ denotes the standard $p$-adic valuation.
Note that only primes dividing the discriminant contribute non-trivially.

\subsubsection{Choice of a reference subspace}
We fix an integer $k\leq n$ for which we always assume that one of the following holds:
\begin{itemize}
\item $k \geq 3$ and $n-k \geq 3$,
\item $k = 2$ and $n-k \geq 3$, or
\item $k \geq 3$ and $n-k=2$.
\end{itemize}
Let $L_0 \subset V$ be given by
\begin{align}\label{eq:refsubspace}
L_0 = \Q^{k} \times \{(0,\ldots,0)\} \subset V.
\end{align}

We adapt the choice of $\gQ$ to this reference subspace $L_0$ and suppose that the first $k$ column vectors in $\gQ^{-1}$ are an orthonormal basis of $L_0$.
This choice asserts that $\gQ$ maps $L_0(\R)$ to $L_0(\R)$ and hence $L_0^\perp(\R)$ to $\{(0,\ldots,0)\} \times \R^{n-k}$,

\subsubsection{Ambient groups}\label{sec:groupsdef}
The following subgroups of $\SL_n$ will be useful throughout this work:
\begin{align*}
\SPnk &= \Big\{ \begin{pmatrix}
A & B \\
0 & D
\end{pmatrix} \in \SL_n : \det(A) = \det(D) = 1 \Big\}\\
\SDnk &= \Big\{ \begin{pmatrix}
A & 0 \\
0 & D
\end{pmatrix} \in \SL_n : \det(A) = \det(D) = 1 \Big\}.
\end{align*}
where $A$ is a $k \times k$-matrix, $D$ is an $(n-k)\times (n-k)$-matrix and $B$ is a $k \times (n-k)$-matrix.
We denote by $\pi_1$ resp.~$\pi_2$ the projection of $\SPnk$ onto the upper-left resp.~bottom-right block.
We also define the group
\begin{align*}
\SG = \Spin_Q \times \SPnk.
\end{align*}
By $\bSG$ we denote the Levi subgroup of $\SG$ with $B =0$ i.e.
\begin{align*}
\bSG &= \Spin_Q \times \SDnk \simeq \Spin_Q \times \SL_k \times \SL_{n-k}.
\end{align*}

\begin{remark}\label{rem:maximality}
Concerning the aforementioned groups we will need two well known facts. Firstly, $\SDnk$ is a maximal subgroup of $\SPnk$ (meaning that there is no connected $\Q$-group $\mathbf{M}$ with $\SDnk \subsetneq \mathbf{M} \subsetneq \SPnk$) -- see for example \cite[Prop.~3.2]{emvforSld}. Secondly, for any quadratic form $q$ in $d$ variables $\SO_q$ is maximal in $\SL_d$ -- see for example \cite{LiebeckSeitzMax} for a modern discussion of maximal subgroups of the classical groups.
\end{remark}

\subsubsection{Landau notation}
In classical Landau notation, we write $f \asymp g$ for two positive functions if there exist constants $c,C>0$ with $cf \leq g\leq Cf$. If the constants depend on another quantity $a$, we sometimes write $f \asymp_a g$ to emphasize the dependence.

\newpage
\part{Homogeneous results}
For an overview of the contents of this part, we refer the reader to \S\ref{sec:organization}.
\section{Stabilizer groups}\label{SEC:stabilisers}

Recall that throughout the article $Q$ is a positive definite integral quadratic form on $V = \Q^n$.
In particular, any subspace of $\Q^n$ is non-degenerate with respect to $Q$.

\subsection{Stabilizers of subspaces}\label{sec:stabilizersintro}
For any subspace $L \subset \overline{\Q}^n$ we define the following groups:
\begin{enumerate}
\item[$\bullet$] $\StSpin_L<\Spin_Q$ is the identity component of the stabilizer group of $L$ in $\Spin_Q$ for the action of $\Spin_Q$ on $\Grk$.
\item[$\bullet$] $\StSO_L<\SO_Q$ is the identity component of the stabilizer group of $L$ in $\SO_Q$ for the action of $\SO_Q$ on $\Grk$.
\end{enumerate}
Note that we have an isogeny $\StSpin_L \to \StSO_L$.
Furthermore, the restriction to $L$ resp.~$L^\perp$ yields an isomorphism of $\Q$-groups
\begin{align}\label{eq:ConnectedComponentOfStabilizerAsProduct}
\StSO_L \to \SO_{Q|_L} \times \SO_{Q|_{L^\perp}}.
\end{align}
To see this, one needs to check that the image consists indeed of special orthogonal transformations. This follows from the fact that the determinant of the restrictions is a morphism with finite image and hence its kernel must be everything by connectedness.
In particular, we have the following cases:
\begin{itemize}
\item If $k \geq 3$ and $n-k \geq 3$, $\StSO_L$ (and hence also $\StSpin_L$) is semisimple.
\item If $k = 2$ and $n-k \geq 3$ (or $k\geq 3$ and $n-k =2$), $\StSO_L$ is reductive.
\item If $k=2$ and $n-k=2$ (which is not a case this paper covers), $\StSO_L$ is abelian.
\end{itemize}

\begin{remark}[Special Clifford groups and \eqref{eq:ConnectedComponentOfStabilizerAsProduct}]\label{rem:isogenytostabprod}
While it might seem appealing to suspect that $\StSpin_L$ is simply-connected, this is actually false.
The following vague and lengthy explanation is not needed in the sequel.
Denote by $\mathbf{M}$ the special Clifford group of $Q$ and similarly by $\mathbf{M}_1$ resp.~$\mathbf{M}_2$ the special Clifford groups of $Q|_L$ resp.~$Q|_{L^\perp}$ (for the duration of this remark) -- cf.~\cite{knus}. 
These are reductive groups whose center is a one-dimensional $\Q$-isotropic torus.
We identify $\mathbf{M}_1,\mathbf{M}_2$ as subgroups of $\mathbf{M}$ and write $\mathbf{C}$ for the center of $\mathbf{M}$ which is in fact equal to $\mathbf{M}_1 \cap \mathbf{M}_2$.
The natural map $\phi: \mathbf{M}_1 \times \mathbf{M}_2 \to \mathbf{M}$ has kernel $\{(x,y) \in \mathbf{C}\times \mathbf{C}: xy = 1\}$ so that
\begin{align*}
\rquot{\mathbf{M}_1 \times \mathbf{M}_2}{\{(x,y) \in \mathbf{C}\times \mathbf{C}: xy = 1\}} \simeq \{g \in \mathbf{M}: g \text{ preserves } L\}^\circ.
\end{align*}
Furthermore, we have the spinor norm which is a character $\chi: \mathbf{M} \to \G_m$ whose kernel is the spin group. Similarly, we have spinor norms $\chi_1,\chi_2$ for $\mathbf{M}_1$ resp.~$\mathbf{M}_2$ which are simply the restrictions of $\chi$.
The above yields that
\begin{align*}
\StSpin_L \simeq \rquot{\{(g_1,g_2) \in \mathbf{M}_1 \times \mathbf{M}_2: \chi(g_1)\chi(g_2) = 1\}}{\ker(\phi)}
\end{align*}
which is isogenous (but not isomorphic) to $\Spin_{Q|_L} \times \Spin_{Q|_{L^\perp}}$.
\end{remark}

The first result we prove states that the group $\StSpin_L$ totally determines the subspace $L$ (up to orthogonal complements). More precisely,

\begin{Prop}
\label{Proposition: Stabilizer Uniquely Determines Subspace}
Let $L_1, L_2 \leq V$ be non-degenerate\footnote{Recall that a non-trivial subspace $W \subset V$ is non-degenerate if $\disc(Q|_W)\neq 0$ or equivalently if there is no non-zero vector $w \in W$ so that $\langle w,w'\rangle = 0$ for all $w' \in W$. This notion is stable under extension of scalars.} subspaces. If $\StSpin_{L_1} = \StSpin_{L_2}$, then $L_1 = L_2$ or $L_1 = L_2^{\perp}$.
\end{Prop}

The proposition follows directly from the following simple lemma:

\begin{Lem}
Let $L \subset V$ be a non-degenerate subspace and let $W \subset V$ be a non-trivial non-degenerate subspace invariant under $\StSO_L$. Then $W \in \{L,L^\perp,V\}$.
\end{Lem}

\begin{proof}
We first observe the following: over $\bar{\Q}$, $\StSO_{L}$ acts transitively on the set of anisotropic lines in $L$ and in $L^\perp$.
Indeed, by Witt's theorem \cite[p.~20]{Cassels} the special orthogonal group in dimension at least $2$ acts transitively on vectors of the same quadratic value. In any two lines one can find vectors of the same quadratic value by taking roots.

Let $w \in W$ be anisotropic and write $w = w_1 + w_2$ for $w_1 \in L$ and $w_2 \in L^\perp$. As $w$ is anisotropic, one of $w_1$ or $w_2$ must also be anisotropic; we suppose that $w_1$ is anisotropic without loss of generality.
Let $h \in \StSO_{L}(\bar{\Q})$ be such that $hw_1 \neq w_1$ and $hw_2 = w_2$. Then
\begin{align*}
u := hw-w =  hw_1 -w_1 \in L \cap W.
\end{align*}
We claim that we can choose $h$ so that $u$ is anisotropic. Indeed, as $w_1$ is anisotropic its orthogonal complement in $L$ is non-degenerate (as $L$ is non-degenerate). We can thus choose $h$ to map $w_1$ to a vector orthogonal to it by the above variant of Witt's theorem. Then
\begin{align*}
Q(u) = Q(hw_1)+Q(w_1) = 2Q(w_1) \neq 0.
\end{align*}
Now note that $L \cap W$ is $\StSO_L$-invariant.
By a further application of the above variant of Witt's theorem and the fact that $L$ is spanned by anisotropic vectors ($L$ is non-degenerate), we obtain that $L \cap W = L$ or equivalently $L \subset W$.
Thus, we may write $W = L \oplus W'$ where $W'$ is an orthogonal complement to $L$ in $W$ and in particular contained in $L^\perp$.
The subspace $W'$ must be non-degenerate as $W$ and $L$ are and hence is trivial or contains anisotropic vectors.
If $W'$ is trivial, $W = L$ and we are done.
Otherwise, we apply the above variant of Witt's theorem and obtain that $W' = L^\perp$ and $W = V$.
\end{proof}

An analogous statement holds for the relationship between quadratic forms and their special stabilizer groups.

\begin{Prop}
\label{Proposition: Stabilizer Uniquely Determines Quadratic Form}
Let $Q_1, Q_2$ be rational quadratic forms on $V$. If $\SO_{Q_1} = \SO_{Q_2}$, then $Q_1 = r Q_2$ for some $r \in \Q$.
\end{Prop}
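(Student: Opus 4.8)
The plan is to show that $\SO_{Q_1} = \SO_{Q_2}$ forces the bilinear forms to be proportional, by studying how the common orthogonal group constrains the forms. The key observation is that if a quadratic form $Q$ has orthogonal group $\mathbf{G} = \SO_{Q_1} = \SO_{Q_2}$, then the associated symmetric bilinear form, viewed as an element of $\mathrm{Sym}^2(V^*)$, must be fixed (up to scalars) by the natural action of $\mathbf{G}$; so the real content is that the space of $\mathbf{G}$-invariant symmetric bilinear forms on $V$ is one-dimensional. I would reduce to this linear-algebra statement and then prove it using irreducibility of the standard representation.

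Concretely, first I would pass to $\bar{\Q}$ (proportionality over $\bar\Q$ plus rationality of both forms gives $r \in \Q$). The group $\SO_{Q_1}$ acts on $V$ via the standard representation, and over $\bar\Q$ this representation is irreducible when $\dim V \geq 3$ (this is a standard fact; in our setting $n = \dim V \geq k + (n-k) \geq 4$, and indeed $Q$ is non-degenerate of dimension $\geq 3$ throughout). Now both $M_{Q_1}$ and $M_{Q_2}$ define $\SO_{Q_1}$-equivariant isomorphisms $V \to V^*$: for $Q_1$ this is immediate from the definition of $\SO_{Q_1}$, and for $Q_2$ it holds because $\SO_{Q_2} = \SO_{Q_1}$ preserves $\langle\cdot,\cdot\rangle_{Q_2}$. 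Composing, $M_{Q_1}^{-1} M_{Q_2}$ is an $\SO_{Q_1}$-equivariant endomorphism of the irreducible representation $V$, hence a scalar $r$ by Schur's lemma; this says $M_{Q_2} = r M_{Q_1}$, i.e. $Q_2 = r Q_1$. Finally $r \in \bar\Q^\times$, but comparing any nonzero entry of the rational matrices $M_{Q_1}, M_{Q_2}$ shows $r \in \Q^\times$, giving $Q_1 = r^{-1} Q_2$ (or $r Q_2$ after renaming), as claimed.

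The one point requiring care is the irreducibility input: $\SO_q$ in its standard $d$-dimensional representation is irreducible over $\bar\Q$ precisely for $d \geq 3$ (for $d = 2$ the torus $\SO_q$ splits $V$ into two lines and the statement genuinely fails, consistent with the fact that the paper excludes small cases), and one should also note that it suffices to work with the connected group or even with $\SO_q(\bar\Q)$ since a Zariski-dense subgroup has the same invariants. Granting this, everything else is Schur's lemma and a rationality check, so I do not expect a serious obstacle; the main thing is simply to invoke irreducibility of the standard representation of $\SO_q$ for $\dim \geq 3$ correctly, which is where the hypotheses on $k$ and $n-k$ (ensuring $n \geq 4$) are implicitly used.
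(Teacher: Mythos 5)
Your argument is correct, and it is genuinely self-contained where the paper is not: for this proposition the paper offers no proof at all, only a citation to \cite{AES-higherdim}*{Lemma 3.3}. Your route -- view $M_{Q_1},M_{Q_2}$ as $\SO_{Q_1}$-equivariant maps $V\to V^*$ (for the standard and contragredient actions), compose to get an equivariant endomorphism of the standard representation, invoke irreducibility over $\bar{\Q}$ for $\dim V\geq 3$ and Schur's lemma, then recover rationality of the scalar from rationality of the matrices -- is a clean way to get exactly the statement needed, and the Zariski-density remark correctly bridges invariants of the algebraic group and of its $\bar{\Q}$-points. Two caveats. First, your proof implicitly needs $Q_1$ (and, for a nonzero scalar, $Q_2$) non-degenerate, both for $M_{Q_1}^{-1}$ and for irreducibility; the proposition as printed does not say this, but every application in the paper is to non-degenerate (indeed definite) forms, so this is a harmless sharpening of hypotheses that you should state. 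Second, your side remark that the statement ``genuinely fails'' for $d=2$ is wrong: for a non-degenerate binary form the standard representation of $\SO_q$ is indeed reducible over $\bar{\Q}$, but in a hyperbolic basis the weight-$\pm2$ action forces $B(e,e)=B(f,f)=0$ for any invariant symmetric bilinear form $B$, so the space of invariants is still one-dimensional and the conclusion still holds; what fails at $d=2$ is only the irreducibility-plus-Schur mechanism, and the paper's exclusion of $(k,n-k)=(2,2)$ is for dynamical reasons (the stabilizer becomes abelian), not because this proposition breaks. Neither caveat affects the validity of your proof of the stated proposition, since $\dim V\geq 5$ here.
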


For a proof see \cite[Lemma 3.3]{AES-higherdim}.

\subsubsection{Maximality}
We now aim to prove that for any non-degenerate subspace $L$ the connected $\Q$-groups $\StSO_L$ and $\StSpin_L$ are maximal subgroups.
Here, maximal is meant among connected and proper subgroups (as it was in Remark~\ref{rem:maximality}).

\begin{Prop}
\label{StabilizerMaximallyConnectedInSOQ}
For any non-degenerate subspace $L \subset V$ the groups $\StSO_L$ and $\StSpin_L$ are maximal.
\end{Prop}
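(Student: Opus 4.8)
The plan is to reduce the statement to a purely algebraic fact about maximal subgroups of classical groups, using the structural isomorphism \eqref{eq:ConnectedComponentOfStabilizerAsProduct} together with the two "well known facts" recalled at the end of \S\ref{sec:groupsdef}: that $\SO_q$ is maximal in $\SL_d$ for any nondegenerate $q$, and (by a parabolic-type argument) that block-upper-triangular or block-diagonal groups are maximal among their natural overgroups. First I would note that it suffices to treat $\StSO_L$, since $\StSpin_L$ is its preimage under the central isogeny $\pi_Q$ and the lattice of connected subgroups is preserved by isogenies. Write $V = L \oplus L^\perp$ and set $d = k$, $e = n-k$; by \eqref{eq:ConnectedComponentOfStabilizerAsProduct} we identify $\StSO_L \simeq \SO_{Q|_L} \times \SO_{Q|_{L^\perp}}$ sitting inside $\SO_Q$.

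The core of the argument is the following: suppose $\mathbf{M}$ is a connected $\Q$-group with $\StSO_L \subsetneq \mathbf{M} \subseteq \SO_Q$; I want to show $\mathbf{M} = \SO_Q$. Consider the action of $\mathbf{M}$ on $V$. Since $\StSO_L \subset \mathbf{M}$ and $\StSO_L$ contains (up to isogeny) $\SO_{Q|_L}$ acting irreducibly on $L$ (for $\dim L \geq 3$, or at least acting without invariant lines when $\dim L = 2$ over $\bar\Q$ one must be a bit more careful — but the cases under consideration have one of the two factors of dimension $\geq 3$ and the other $\geq 2$), the $\StSO_L$-invariant nondegenerate subspaces of $V$ are exactly $0, L, L^\perp, V$ — this is essentially the computation already carried out inside the proof of Proposition~\ref{Proposition: Stabilizer Uniquely Determines Subspace}, which I would cite or re-run. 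Hence $\mathbf{M}$ either preserves $L$ (and then $\mathbf{M} \subseteq \StSOFull_L$, forcing $\mathbf{M} = \StSO_L$ by connectedness, contradicting properness of the inclusion), or $\mathbf{M}$ acts irreducibly on $V$. In the irreducible case, $\mathbf{M}$ is a connected irreducible subgroup of $\SO_Q \subset \SL_n$ containing the semisimple (or reductive) group $\StSO_L$; since $\SO_Q$ is maximal in $\SL_n$ and $\mathbf{M} \subseteq \SO_Q$, one concludes $\mathbf{M} = \SO_Q$ provided one rules out $\mathbf{M}$ being a proper irreducible subgroup strictly between $\StSO_L$ and $\SO_Q$. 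This last point I would handle either by a dimension/root-system comparison — $\StSO_L$ has corank growing like $k(n-k)$ inside $\SO_Q$, and there is no intermediate classical subgroup of the right type — or by invoking the Dynkin/Liebeck--Seitz classification of maximal subgroups of $\SO_n$ cited as \cite{LiebeckSeitzMax}, checking that none of the maximal subgroups of $\SO_Q$ contains $\SO_{Q|_L} \times \SO_{Q|_{L^\perp}}$ with the stated dimension constraints.

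An alternative, cleaner route that avoids the full maximal-subgroup classification: work over $\bar\Q$ and use the theory of the involution. The subgroup $\StSO_L$ is the identity component of the fixed points of the involution $\theta = \mathrm{Ad}(s)$ where $s$ is the $Q$-orthogonal reflection that is $+1$ on $L$ and $-1$ on $L^\perp$ (a symmetric subgroup, i.e.\ the datum of a symmetric space). Identity components of symmetric subgroups of a simple group are maximal among connected $\theta$-stable subgroups, and by a standard argument (the "maximality of symmetric subgroups" — e.g.\ any connected overgroup either normalizes the $(-1)$-eigenspace structure or is everything) one gets maximality outright when the symmetric space is irreducible, which holds here since $\SO_Q$ is simple and $\theta$ is a single involution. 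I would likely present this symmetric-subgroup argument as the main line since it is short, and remark that it also transparently explains the two exceptional behaviours excluded (the abelian case $k=e=2$, where $\theta$-fixed points are a maximal torus, still maximal but the "strong isotropy" fails).

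The main obstacle I anticipate is the low-rank boundary cases: when $\dim L = 2$, $\SO_{Q|_L}$ is a one-dimensional torus and does \emph{not} act irreducibly on $L$ over $\bar\Q$ (it has two invariant isotropic lines), so the "$\StSO_L$-invariant subspaces are only $0,L,L^\perp,V$" step needs the other factor $\SO_{Q|_{L^\perp}}$ (dimension $\geq 3$, genuinely irreducible on $L^\perp$) to pin things down, and one must argue that an $\mathbf{M}$ preserving one of the isotropic lines in $L$ is impossible because such an $\mathbf{M}$ would be contained in a parabolic, whereas $\StSO_L$ is reductive and not contained in any proper parabolic. Making this airtight — that a connected overgroup of a reductive symmetric subgroup cannot lie in a proper parabolic — is the one place requiring genuine care; I would dispatch it via the fact that $\StSO_L$ contains a maximal torus of $\SO_Q$ (true by the eigenspace decomposition of $\theta$) together with a unipotent of $\SO_{Q|_{L^\perp}}$ positioned so that no proper parabolic contains the pair, or simply by the general principle that a reductive subgroup lying in a parabolic lies in a Levi, and no Levi of $\SO_Q$ contains $\StSO_L$ when $\dim L^\perp \geq 3$.
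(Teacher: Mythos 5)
Your ``cleaner'' second route is really the paper's own argument in disguise: the paper works with the eigenspace decomposition $\g=\h\oplus\mathfrak{r}$ of the involution $\mathrm{Ad}(s)$ (with $s=+1$ on $L$, $-1$ on $L^\perp$) and derives maximality from the fact that an intermediate Lie algebra meets $\mathfrak{r}$ in an $\StSO_L$-invariant subspace. But the step you outsource to a general principle is exactly where the paper does its work, and the principle is false as you state it: ``$(G^\theta)^\circ$ is maximal connected whenever $G$ is simple and $\theta$ is an involution'' fails, e.g.\ for $S(\GL_1\times\GL_{n-1})\subset\SL_n$ and for $\SO_2\times\SO_{n-2}\subset\SO_n$ over $\bar{\Q}$ -- both are fixed-point groups of involutions of simple groups, yet both lie in proper parabolics, because whenever the $(-1)$-eigenspace $\mathfrak{r}$ is a reducible $\h$-module, $\h\oplus\mathfrak{r}^+$ is an intermediate subalgebra (recall $[\mathfrak{r},\mathfrak{r}]\subset\h$ for any symmetric pair). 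What maximality actually requires is irreducibility of the isotropy representation of $\StSO_L$ on $\mathfrak{r}\simeq\Mat(k,n-k)$, and this is precisely the paper's Lemma~\ref{lem:irred rep of SO_k times SO_m} (tensor product of the two standard representations), which genuinely needs $k,n-k\geq 3$ over $\bar{\Q}$. Your treatment of the boundary case is also incorrect as written: for $k=2$ the module splits over $\bar{\Q}$ and $\StSO_L\otimes\bar{\Q}$ \emph{is} the Levi $\GL_1\times\SO_{n-2}$ of the parabolic stabilizing an isotropic line of $L\otimes\bar{\Q}$, so the claims ``no Levi of $\SO_Q$ contains $\StSO_L$'' and ``the pair cannot lie in a proper parabolic'' are false geometrically; what rescues the $\Q$-statement for definite $Q$ is rationality (Galois swaps the two isotropic lines, forcing any $\Q$-subalgebra containing $\h$ and one piece of $\mathfrak{r}$ to contain both), and that argument does not appear in your write-up.

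Your first route has a gap of the same nature. After reducing to ``$\mathbf{M}$ acts irreducibly on $V$'' (which is fine for $k,n-k\geq 3$), nothing has been proved: maximality of $\SO_Q$ in $\SL_n$ says nothing about subgroups \emph{of} $\SO_Q$, and $\SO_Q$ does contain proper connected irreducible subgroups (e.g.\ $G_2\subset\SO_7$, $\Spin_7\subset\SO_8$, tensor-product embeddings), so ``irreducible $\Rightarrow$ all of $\SO_Q$'' is exactly the point at issue; a corank slogan or an unexecuted appeal to the Dynkin/Liebeck--Seitz classification is not a proof. In short, both routes are missing the same concrete ingredient -- the irreducibility of the adjoint action of $\SO_{Q|_L}\times\SO_{Q|_{L^\perp}}$ on $\Mat(k,n-k)$ (or an actual verification against the classification) -- so the proposal as it stands has a genuine gap.
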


The result above is well-known and due to Dynkin, who classified the maximal subgroups of the classical groups in \cite{DynkinMaximal} (see also the work of Liebeck and Seitz such as \cite{LiebeckSeitzMax}).
We will give an elementary proof.

\begin{proof}
Note that it suffices to prove the statement for $\StSO_L$.
As $L$ is non-degenerate, we may choose an orthogonal basis of $V$ consisting of an orthogonal basis of $L$ and an orthogonal basis of $L^\perp$. 
Let
\[ M_Q = \begin{pmatrix}
M_1 & 0 \\
0 & M_4
\end{pmatrix} \ \text{with} \ M_1,M_4 \ \text{diagonal matrices} \]
be the matrix representation of $Q$ in this basis.
Computing the Lie algebras of $\SO_Q$ and $\StSO_L$ we obtain:
\[ \g := \Lie(\SO_Q) = \{ A \in \Mat(n) : A^TM_Q+M_QA = 0 \} \]
and 
 \[ \h := \Lie(\StSO_L) =  \Big\{ A \in \Mat(n) : A = \begin{pmatrix}
A_1 & 0 \\
0 & A_4
\end{pmatrix}
\ \text{and} \ A_i^{T}M_i + M_iA_i = 0, i = 1,4 \Big\}. \]

We may split $\g$ in a direct sum $\h \oplus \mathfrak{r}$ where $\mathfrak{r}$ is an invariant subspace under the adjoint action of $\StSO_L$ on $\g$.
Explicitly, we may set 
\[ \mathfrak{r} = 
\Big\{ \begin{pmatrix}
0 & A_2 \\
A_3 & 0 \\
\end{pmatrix} : 
A_2^TM_1 +M_4A_3 = 0 \Big\}. \]
We claim that the representation of $\StSO_L$ on $\mathfrak{r}$ is irreducible.
Note that we may as well show that the representation of $\SO_{Q|_L}  \times  \SO_{Q|_{L^{\perp}}}$ on $\Mat(k, n-k)$ given by
\[ ((\sigma_1, \sigma_2) , A) \mapsto \sigma_1 A \sigma_2^{-1}  \]
is irreducible.
Over $\bar{\Q}$ we may apply Lemma \ref{lem:irred rep of SO_k times SO_m} below from which this follows.

Now let $\mathbf{M}$ be a connected group containing $\StSO_L$ and let $\mathfrak{m}$ be its Lie algebra.
Note that $\mathfrak{m} \cap \mathfrak{r}$ is an invariant subspace under the adjoint action of $\StSO_L$ on $\mathfrak{r}$.
Since this representation is irreducible, $\mathfrak{m} \cap \mathfrak{r} = \{0\}$ or $\mathfrak{m} \cap \mathfrak{r} =\mathfrak{r}$. In the former case, we have that $\mathfrak{m} = \mathfrak{h}$ and in the latter $\mathfrak{m} = \mathfrak{g}$. It follows that $\StSO_L$ is maximal and the proof is complete.
\end{proof}

\begin{Lem}\label{lem:irred rep of SO_k times SO_m}
For any $k,m \geq 3$ the action of $\SO_k \times \SO_m$ on $\Mat(k,m)$ by right- resp.~left-multiplication is irreducible.
\end{Lem}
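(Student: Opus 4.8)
The plan is to prove the statement over an algebraically closed field (say $\bar{\Q}$, or any field of characteristic zero), since irreducibility over $\bar{\Q}$ implies irreducibility over $\Q$; we may therefore think of $\SO_k \times \SO_m$ as a split reductive group. The representation $\Mat(k,m)$ is the external tensor product $\R^k \boxtimes \R^m$, i.e.\ the standard representation of $\SO_k$ tensored with the standard representation of $\SO_m$, with $(\sigma_1,\sigma_2)$ acting by $A \mapsto \sigma_1 A \sigma_2^{-1}$ (using that $\sigma_2^{-1} = \sigma_2^T$ on the orthogonal group up to the relevant form, which does not affect irreducibility). Since each factor acts only on one tensor slot, the representation is a genuine external tensor product of representations of the two factors.

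The key step is then the standard fact that an external tensor product $U \boxtimes W$ of a representation $U$ of a group $G_1$ and a representation $W$ of a group $G_2$ is irreducible as a representation of $G_1 \times G_2$ if and only if $U$ is irreducible for $G_1$ and $W$ is irreducible for $G_2$. (One direction is what we need: if both factors are irreducible, so is the tensor product. This can be proved via the density/Jacobson density theorem — the image of the group algebra of $G_i$ spans $\mathrm{End}(U)$ resp.\ $\mathrm{End}(W)$, hence the image of the group algebra of $G_1\times G_2$ spans $\mathrm{End}(U)\otimes\mathrm{End}(W) = \mathrm{End}(U\otimes W)$ — or, in the semisimple/reductive setting, via characters.) So the proof reduces to: the standard representation $\bar{\Q}^k$ of $\SO_k$ is irreducible for $k \geq 3$, and likewise $\bar{\Q}^m$ of $\SO_m$ is irreducible for $m \geq 3$.

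To finish I would verify that the standard representation of $\SO_d$ is irreducible for $d \geq 3$. This is classical: $\SO_d$ acts transitively on the nonzero vectors of each fixed nonzero norm (Witt's theorem, already invoked in the excerpt), and by scaling one sees there is no proper nonzero invariant subspace — any such subspace, being $\SO_d$-stable and hence (by Witt) containing a whole "sphere" once it contains one anisotropic vector, must be everything; and every nonzero subspace over $\bar\Q$ contains an anisotropic vector for $d\ge 3$ because a nonzero subspace on which the form vanishes identically has dimension at most $\lfloor d/2\rfloor < d$, so it cannot be invariant (its span under $\SO_d$ would be larger). Alternatively one checks it on the level of the Lie algebra $\mathfrak{so}_d$, whose standard representation is the well-known fundamental/vector representation and is irreducible for $d \geq 3$ (the cases $d=3,4$ being $\mathfrak{sl}_2$ and $\mathfrak{sl}_2\times\mathfrak{sl}_2$ acting on their $3$- resp.\ $(2\otimes 2)$-dimensional modules). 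Note $d=2$ genuinely fails — $\SO_2$ is a torus and $\bar{\Q}^2$ splits into two characters — which is exactly why the hypothesis $k,m\geq 3$ is needed, matching the restriction throughout the paper.

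The main obstacle is essentially bookkeeping rather than a deep point: one must be a little careful that $A\mapsto \sigma_1 A\sigma_2^{-1}$ really is the external tensor product (as opposed to an internal tensor product where one group acts diagonally), and one must invoke the tensor-product irreducibility lemma in a form valid for linear algebraic groups over a field that need not be algebraically closed — which is why reducing to $\bar{\Q}$ first is the cleanest route. Everything else is standard representation theory of the classical groups, and could even be cited (e.g.\ from \cite{LiebeckSeitzMax} or a standard text), though the self-contained Witt-theorem argument above is short enough to include.
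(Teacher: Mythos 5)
Your proposal is correct and follows essentially the same route as the paper: identify $\Mat(k,m)$ with the external tensor product of the standard representations of $\SO_k$ and $\SO_m$, use that an external tensor product of irreducibles is irreducible, and note that the standard representation of $\SO_d$ is irreducible precisely for $d\geq 3$ (the paper cites a reference for the tensor-product fact and asserts irreducibility of the standard representation with a footnote about $d=2$, whereas you supply the density-theorem and Witt-theorem details yourself). No gaps; the extra care about working over $\bar{\Q}$ matches how the lemma is actually applied in the paper.
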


\begin{proof}
We write a very elementary proof for the sake of completeness. 
First, assume that $k,m \geq 3$.
Note that the standard representation of $\SO_k$ (resp.~$\SO_m$) is irreducible as\footnote{Note that whenever $k=2$ any isotropic vector is a fixed vector.}
$k \geq 3$ (resp.~$m \geq 3$).
It follows that the representation of $\SO_k \times \SO_m$ on the tensor product of the respective standard representations is also irreducible (see, for instance, \cite[Theorem 3.10.2]{TensorProduct_Irreducible}); the latter is isomorphic to the representation in the lemma.
\end{proof}

\subsection{The isotropy condition}

We establish here congruence conditions which imply isotropy of the stabilizer groups $\StSpin_L$.
Recall that a $\Q_p$-group $\G$ is \emph{strongly isotropic} if for every connected non-trivial normal subgroup $\mathbf{N} < \G$ defined over $\Qp$, the group $\mathbf{N}(\Qp)$ is not compact.
We say that a $\Q$-group $\G$ is \emph{strongly isotropic at a prime $p$} if $\G$ is strongly isotropic as a $\Q_p$-group.

\begin{Prop}
Let $(V',Q')$ be any non-degenerate quadratic space over $\Q_p$. Then $Q'$ is isotropic if and only if $\Spin_{Q'}$ is strongly isotropic.
\end{Prop}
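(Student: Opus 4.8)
The plan is to prove both implications using the structure theory of $\Spin_{Q'}$ over the local field $\Q_p$. Recall that for a non-degenerate quadratic space $(V',Q')$ over $\Q_p$ of dimension $d$, the group $\Spin_{Q'}$ is semisimple and simply connected, and (except in small degenerate dimensions) it is $\Q_p$-simple or a product of at most two $\Q_p$-simple factors — so being strongly isotropic is equivalent to every almost-$\Q_p$-simple factor being isotropic, which by definition means having $\Q_p$-rank at least $1$. I would first reduce to showing: $Q'$ is isotropic over $\Q_p$ if and only if the $\Q_p$-rank of $\Spin_{Q'}$ is positive, and moreover that when $Q'$ is anisotropic the whole group $\Spin_{Q'}(\Q_p)$ is compact. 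The key classical input is that the $\Q_p$-rank of $\SO_{Q'}$ (equivalently $\Spin_{Q'}$) equals the Witt index of $Q'$, i.e. the dimension of a maximal totally isotropic subspace.

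For the forward direction, suppose $Q'$ is anisotropic. Then its Witt index is $0$, so $\Spin_{Q'}$ has $\Q_p$-rank $0$, i.e. it is $\Q_p$-anisotropic. A $\Q_p$-anisotropic semisimple group has compact group of $\Q_p$-points (this is a standard consequence of the Bruhat–Tits theory / reduction theory over local fields, or one can see it directly: $\Orth_{Q'}(\Q_p)$ preserves the compact sets $\{v : Q'(v) = c\}$ which cover $V' \setminus \{0\}$ since $Q'$ is anisotropic, hence $\Orth_{Q'}(\Q_p)$ is compact, and $\Spin_{Q'}(\Q_p)$ maps into it with compact kernel). In particular no connected normal $\Q_p$-subgroup can have non-compact points — there are none that are non-compact — so $\Spin_{Q'}$ is \emph{not} strongly isotropic (the condition fails vacuously is the wrong reading: strong isotropy requires non-compactness, which fails). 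Conversely, if $Q'$ is isotropic, I want to show every connected non-trivial normal $\Q_p$-subgroup $\mathbf{N}$ has non-compact $\Q_p$-points. Here I would use that $\mathbf{N}$, being normal in the semisimple group $\Spin_{Q'}$, is a product of some of the almost-$\Q_p$-simple factors; so it suffices to show each such factor is isotropic.

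The main obstacle, and the step requiring care, is handling the decomposition of $\Spin_{Q'}$ into $\Q_p$-simple factors and showing that isotropy of $Q'$ forces \emph{each} factor to be isotropic. Generically $\Spin_{Q'}$ is almost $\Q_p$-simple and there is nothing to do; the genuine issue is the low-dimensional and split-into-two-factors cases: $d = 2$ (where $\Spin_{Q'}$ is a one-dimensional torus, a norm-$1$ torus of a quadratic étale algebra, which is isotropic iff $Q'$ is — but the proposition presumably is applied with $\dim \geq 3$, and in any case $d=2$ should either be excluded or noted separately), $d = 4$ (where $\Spin_{Q'}$ can have type $A_1 \times A_1$ over $\Q_p$, corresponding to $Q'$ being similar to a norm form and the even Clifford algebra being a product of two quaternion algebras; then isotropy of $Q'$ forces both quaternion algebras to be split, hence both $\SL_1$-factors to be $\SL_2$, i.e. isotropic), $d = 6$ (type $A_3 = D_3$, handled by the $\SL_1$ of a degree-$4$ algebra / triality-free argument), and also the case where $Q'$ is isotropic but has a large anisotropic kernel. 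I would dispatch these by invoking the explicit description of the even Clifford algebra $C_0(Q')$ and its behaviour under Witt decomposition $Q' \cong \mathbb{H}^{\oplus r} \perp Q'_{\mathrm{an}}$: splitting off a hyperbolic plane tensors $C_0$ with a matrix algebra, which forces the relevant factors to be isotropic, and over a local field there is no room for a "mixed" situation because $\Q_p$-rank is additive over the factors and equals the Witt index. Once each $\Q_p$-almost-simple factor of $\Spin_{Q'}$ is seen to be isotropic whenever $Q'$ is, it follows that each has non-compact $\Q_p$-points (an isotropic almost-simple group over a local field contains a copy of a $\Q_p$-split torus or at least a non-trivial unipotent, hence is non-compact), and therefore so does any connected non-trivial normal subgroup, which is what strong isotropy demands. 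This completes the equivalence.
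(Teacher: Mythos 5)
Your proposal is correct and takes essentially the same route as the paper: both directions rest on ``$Q'$ isotropic $\iff$ $\Spin_{Q'}$ isotropic'' (anisotropic $\Rightarrow$ compactness of $\Spin_{Q'}(\Q_p)$), strong isotropy is reduced to isotropy of the $\Q_p$-simple factors, and the only genuinely non-simple case, $\dim V'=4$ with square discriminant, is handled exactly as in the paper via the even Clifford algebra being a product of quaternion algebras whose splitness is forced by isotropy of $Q'$. The only cosmetic difference is that you invoke rank $=$ Witt index and general anisotropic-implies-compact structure theory (plus some unnecessary worry about $d=6$, where the group is absolutely almost simple), whereas the paper exhibits a split torus inside $\Spin_{Q'|_H}$ for a hyperbolic plane $H$ and argues compactness directly.
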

\begin{proof}
If $Q'$ is isotropic, $V'$ contains an hyperbolic plane $H$ (see \cite[Chapter 2. Lemma 2.1]{Cassels}). 
Then $\Spin_{Q'}$ contains $\Spin_{Q'|_H}$ which is a split torus. Hence, $\Spin_{Q'}$ is isotropic. Conversely, if $Q'$ is anisotropic then $\Spin_{Q'}(\Q_p)$ is compact as the hypersurface $Q'(x) =1$ is compact.
This proves that $Q'$ is isotropic if and only if $\Spin_{Q'}$ is isotropic.
This is sufficient to prove the proposition if $\dim(V') =2$ (as the torus $\Spin_{Q'}$ is one-dimensional) and if $\dim(V') > 2$ is not equal to $4$ as $\Spin_{Q'}$ is absolutely almost simple in these cases.

Suppose that $\dim(V') = 4$. We freely use facts about Clifford algebras and spin groups from \cite{knus} (mostly Chapter $9$ therein).
Recall that $\Spin_{Q'}$ is equal to the norm one elements of the even Clifford algebra $\mathcal{C}^0$ of $Q'$. If the center $\mathcal{Z}$ of $\mathcal{C}^0$ is a field over $\Q_p$, $\mathcal{C}^0$ is a quaternion algebra over $\mathcal{Z}$ and $\Spin_{Q'}$ is simple. In this case, the proof works as in the case of $\dim(V') \neq 4$.

So suppose that the center is split which is equivalent to $\disc(Q')$ being a square in $\Q_p$.
Thus, there is a quaternion algebra $\mathcal{B}$ over $\Q_p$ such that $(V',Q')$ is similar to $(\mathcal{B},\mathrm{Nr})$ where $\mathrm{Nr}$ is the norm on $\mathcal{B}$.
Then $\Spin_{Q'} \simeq \SL_1(\mathcal{B}) \times \SL_1(\mathcal{B})$ which is a product of two $\Q_p$-simple groups. 
Note that $\mathcal{B}$ or $\SL_1(\mathcal{B})$ are isotropic if and only if $Q'$ is isotropic. This concludes the proof of the proposition.
\end{proof}

Via \eqref{eq:ConnectedComponentOfStabilizerAsProduct} we obtain the following.

\begin{Cor} \label{cor: strong-isotropicity stabilizer iff subspace}
Let $L \in \Grk(\Q)$ and $p$ be an odd prime. 
Then, $\StSpin_{L}$ is strongly isotropic at $p$ if and only if the quadratic spaces $(L,Q|_{L})$ and $(L^{\perp},Q|_{L^{\perp}})$ are isotropic over $\Q_p$.
\end{Cor}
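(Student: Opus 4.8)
The plan is to combine the previous proposition (characterizing isotropy of a spin group over $\Q_p$ via strong isotropicity) with the decomposition \eqref{eq:ConnectedComponentOfStabilizerAsProduct} of the connected stabilizer. First I would recall that, since $k$ and $n-k$ fall into one of the three admissible ranges fixed in the setup, the group $\StSO_L$ is reductive and the restriction maps give the $\Q$-isomorphism $\StSO_L \simeq \SO_{Q|_L} \times \SO_{Q|_{L^\perp}}$. Passing through the isogeny $\StSpin_L \to \StSO_L$ and the discussion in Remark~\ref{rem:isogenytostabprod}, the connected group $\StSpin_L$ is isogenous to $\Spin_{Q|_L} \times \Spin_{Q|_{L^\perp}}$; since strong isotropicity (a statement about non-compactness of $\Q_p$-points of normal subgroups) is invariant under $\Q_p$-isogeny and passes through almost direct products, $\StSpin_L$ is strongly isotropic at $p$ if and only if both factors $\Spin_{Q|_L}$ and $\Spin_{Q|_{L^\perp}}$ are.

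Next I would apply the preceding proposition to each factor: $\Spin_{Q|_L}$ is strongly isotropic over $\Q_p$ if and only if $(L,Q|_L)$ is isotropic over $\Q_p$, and likewise for $(L^\perp, Q|_{L^\perp})$. Chaining these two equivalences with the previous paragraph yields exactly the claimed statement. The hypothesis that $p$ be odd is what guarantees we are in a setting where the previous proposition applies cleanly (it is already built into that proposition's use for the $\dim = 2$ factors, where one must avoid the dyadic prime); I would simply carry it along.

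The one genuinely delicate point, and the step I expect to be the main obstacle, is making precise that strong isotropicity transfers correctly across the isogeny and the central quotient relating $\StSpin_L$ to $\Spin_{Q|_L} \times \Spin_{Q|_{L^\perp}}$. The subtlety flagged in Remark~\ref{rem:isogenytostabprod} is that $\StSpin_L$ is \emph{not} simply connected and not literally a direct product of the two spin groups — it is a quotient of $\{(g_1,g_2)\in \mathbf M_1\times\mathbf M_2 : \chi(g_1)\chi(g_2)=1\}$ by a central subgroup. Still, the connected normal $\Q_p$-subgroups of $\StSpin_L$ correspond (up to isogeny, and bijectively on the level of almost-simple factors) to those of $\Spin_{Q|_L}\times\Spin_{Q|_{L^\perp}}$, and an isogeny of connected $\Q_p$-groups restricts to an isogeny on any normal subgroup, hence preserves compactness of the $\Q_p$-points (the kernel is finite, so the induced map on $\Q_p$-points is proper with finite kernel and open image of finite index, all of which preserve non-compactness). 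Thus no factor can become compact or non-compact in the passage, and the equivalence descends factor-by-factor. Once this bookkeeping is in place the corollary follows immediately.
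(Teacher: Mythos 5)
Your proposal is correct and follows the same route the paper intends: the corollary is deduced from the preceding proposition by passing through the decomposition \eqref{eq:ConnectedComponentOfStabilizerAsProduct} (equivalently, the isogeny of $\StSpin_L$ with $\Spin_{Q|_L}\times\Spin_{Q|_{L^\perp}}$), with the observation that strong isotropicity is insensitive to $\Q_p$-isogenies and is checked factor-by-factor — the bookkeeping you flag as delicate is exactly what the paper leaves implicit. One minor remark: the oddness of $p$ is not actually needed for the proposition (which holds at the dyadic prime too); it is simply carried along because of how the corollary is used later.
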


Using standard arguments (as in \cite[Lemma 3.7]{AES-higherdim} for example) we may deduce the following explicit characterization of isotropy.

\begin{Prop}
\label{Proposition: Strong Isotropicity criterion for HL }
Let $L \in \Grk(\Q)$ be a rational subspace and let $p$ be an odd prime. Then, $\StSpin_L$ is strongly isotropic at $p$ if any of the following conditions hold:
\begin{itemize}
\item $k \geq 5$ and $n-k \geq 5$.
\item $3 \leq k < 5$, $n- k \geq 5$, and $p \nmid \disc_Q(L)$.
\item $k \geq 5$, $3 \leq n- k < 5$, and $p \nmid \disc_Q(L^{\perp})$.
\item $3 \leq k < 5$, $3 \leq n- k < 5$, $p \nmid \disc_Q(L)$, and $p \nmid \disc_Q(L^{\perp})$.
\item $k=2$, $n-k \geq 5$, and $-\disc_Q(L) \in (\mathbb{F}_p^\times)^2$ (i.e.~$-\disc_Q(L)$ is a non-zero square modulo $p$).
\item $k=2$, $3 \leq n-k < 5$, $p \nmid \disc_Q(L^{\perp})$, and $-\disc_Q(L)\in (\mathbb{F}_p^\times)^2$.
\item $k\geq 5$, $n-k = 2$, and $-\disc_Q(L^\perp)\in (\mathbb{F}_p^\times)^2$.
\item $3 \leq k < 5$, $n-k=2$, $p \nmid \disc_Q(L)$, and $-\disc_Q(L^{\perp})\in (\mathbb{F}_p^\times)^2$.
\end{itemize}
\end{Prop}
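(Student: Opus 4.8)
The plan is to deduce the whole statement from Corollary~\ref{cor: strong-isotropicity stabilizer iff subspace}: since $p$ is odd, $\StSpin_L$ is strongly isotropic at $p$ exactly when the two positive definite quadratic spaces $(L,Q|_L)$ and $(L^\perp,Q|_{L^\perp})$ are both isotropic over $\Qp$. The eight bullet points are precisely all combinations of the three ``size regimes'' $d\geq 5$, $d\in\{3,4\}$, $d=2$ for $d=k$ and for $d=n-k$, the combination $k=n-k=2$ being excluded by our standing hypotheses on $(k,n)$. Hence it suffices to establish the following local statement and to apply it to $W=L$ (with $d=k$) and to $W=L^\perp$ (with $d=n-k$): for a non-degenerate subspace $W\subseteq V$ of dimension $d$ and an odd prime $p$, the space $(W,Q|_W)$ is isotropic over $\Qp$ provided that either (i)~$d\geq 5$; or (ii)~$d\in\{3,4\}$ and $p\nmid\disc_Q(W)$; or (iii)~$d=2$ and $-\disc_Q(W)\in(\F_p^\times)^2$.

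For (i) I would invoke the classical fact that the $u$-invariant of $\Qp$ equals $4$, so every non-degenerate quadratic form over $\Qp$ in at least five variables is isotropic (see e.g.\ \cite{Cassels}); the discriminant plays no role here. For (ii), the hypothesis $p\nmid\disc_Q(W)$ means that a Gram matrix of $Q|_{W(\Z)}$ has determinant in $\Zp^\times$, i.e.\ $W(\Z)\otimes\Zp$ is a unimodular $\Zp$-lattice; as $p$ is odd it diagonalizes over $\Zp$ as $\langle u_1,\dots,u_d\rangle$ with $u_i\in\Zp^\times$. Reducing modulo $p$ gives a non-degenerate quadratic form over $\F_p$ in $d\geq 3$ variables, which has a non-trivial zero $\bar v\neq 0$ by Chevalley--Warning; since $p$ is odd and $\bar v\neq 0$, the point $\bar v$ is a smooth $\F_p$-point of the affine quadric cut out by the reduction of $Q|_W$, so Hensel's lemma lifts it to a non-trivial zero in $\Zp^d$, and $Q|_W$ is isotropic over $\Qp$. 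For (iii), a binary space over $\Qp$ is isotropic iff its discriminant lies in $-(\Qp^\times)^2$; since $-\disc_Q(W)$ is a $p$-adic unit that is a square modulo $p$, it is a square in $\Zp^\times$ by Hensel, so $Q|_W$ is isotropic.

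Granting this lemma, the eight cases of the Proposition are pure bookkeeping: in each, the stated congruence hypotheses are exactly what is needed to apply the relevant clause among (i)--(iii) to both $L$ and $L^\perp$ (using $\disc_Q(L^\perp)$ in place of $\disc_Q(W)$ for the complement), whereupon Corollary~\ref{cor: strong-isotropicity stabilizer iff subspace} concludes. I do not anticipate a real obstacle: the only point that deserves a little care is the translation of the arithmetic condition $p\nmid\disc_Q(W)$ into $\Zp$-unimodularity of $W(\Z)\otimes\Zp$ (which is what makes the reduction-and-Hensel argument legitimate); after that everything is the standard local classification of quadratic forms over $\Qp$, carried out exactly as in \cite[Lemma~3.7]{AES-higherdim}.
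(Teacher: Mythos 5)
Your argument is correct and is exactly the ``standard argument'' the paper has in mind: it leaves the proof to the reader, citing Corollary~\ref{cor: strong-isotropicity stabilizer iff subspace} and the local analysis of \cite[Lemma 3.7]{AES-higherdim}, which is precisely your reduction to isotropy of $(L,Q|_L)$ and $(L^\perp,Q|_{L^\perp})$ over $\Qp$ followed by the three local clauses (forms in $\geq 5$ variables over $\Qp$ are isotropic; unimodular rank $3,4$ lattices at odd $p$ via reduction mod $p$ and Hensel; binary forms isotropic iff $-\disc$ is a square). No gaps; the case bookkeeping matches the eight bullets.
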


While the list is lengthy, let us note that half of it consists in interchanging the roles of $k$ and $n-k$ as well as $L$ and $L^\perp$.
Also, whenever $p \nmid \disc(Q)$ the conditions $p \nmid \disc_Q(L)$ and $p \nmid \disc_Q(L^\perp)$ are equivalent (see Proposition~\ref{Prop:Isomorphism between the quotients of piL(Rn) and piL perp (Rn)} and its corollary).
When $k=4$ or $n-k=4$ the above criteria are sufficient but not necessary. For example, the form $x_1^2+x_2^2+ x_3^2 + px_4^2$ is isotropic though its discriminant is divisible by $p$.

\subsection{Diagonal embeddings of stabilizer groups}\label{sec:diagemb}

In this section, we define a diagonally embedded copy $\StDiag_L < \Spin_Q\times \SPnk$ of the stabilizer group of any subspace $L  \in \Gr{k}(\Q)$.

With the arithmetic application in Part $2$ in mind, we must allow for any rational subspace a choice of a full rank $\Z$-lattice $\Lambda_L \subset \Q^n$ with
\begin{align*}
\Z^n \subset \Lambda_L\subset (\Z^n)^\# := \{v \in \Q^n: \langle v,w\rangle \in \Z \text{ for all } w \in \Z^n\}.
\end{align*}
If $Q$ is unimodular (i.e.~$\disc(Q) =1$) then $\Lambda_L = \Z^n=(\Z^n)^\#$.
We emphasize that for the arguments in the current Part $1$, this choice of intermediate lattice $\Lambda_L$ is inconsequential and the reader may safely assume $\Lambda_L = \Z^n$ at first.

Let $g_L \in \GL_n(\Q)$ be such that $g_L \Z^n = \Lambda_L$, its first $k$ columns are a basis of $L \cap \Lambda_L$ and such that $\det(g_L) >0$. 
In words, the columns of $g_L$ complement a basis of $L\cap \Lambda_L$ into an oriented basis of $\Lambda_L$.
We then have a well-defined morphism with finite kernel
\begin{align}\label{eq:defisogenystab}
\Psi_L: \StSpin_L \to \SPnk,\ h \mapsto g_L^{-1}\isog(h) g_L.
\end{align}
Note that the morphism depends on the choice of $\Lambda_L$, but we omit this dependency here to simplify notation.
It also depends on the choice of basis; a change of basis conjugates $\Psi_L$ by an element of $\SPnk(\Z)$.

One can restrict the action of an element of $\StSpin_L$ to $L$ and represent the so-obtained special orthogonal transformation in the basis contained in $g_L$.
This yields an epimorphism (as in \eqref{eq:ConnectedComponentOfStabilizerAsProduct})
\begin{align*}
\psi_{1,L}: \StSpin_L \to \SO_{q_{{ }_{L \cap \Lambda_L}}}.
\end{align*}
Explicitly, the epimorphism is given by
\begin{align*}
\psi_{1,L}: h \in \StSpin_L \mapsto \pi_1(g_L^{-1}\isog(h)g_L) = \pi_1 \circ \Psi_L(h) \in \SO_{q_{{ }_{L \cap \Lambda_L}}}.
\end{align*}

Similarly to the above, one can obtain an epimorphism $\StSpin_L \to \SO_{Q|_{L^\perp}}$. 
To explicit this, we would like to specify how to obtain a basis of $L^\perp \cap \Lambda_L^\#$ from $g_L$.
For this, observe first that the basis dual to the columns of $g_L$ is given by the columns of $M_Q^{-1} (g_L^{-1})^t$.
Note that the last $n-k$ columns of $M_Q^{-1} (g_L^{-1})^t$ are orthogonal to $L$ so they form a basis of $\Lambda_L^\# \cap L^\perp$.
We hence obtain an epimorphism
\begin{align*}
\psi_{2,L}: h \in \StSpin_L \mapsto \pi_2(g_L^t M_Q\isog(h)M_Q^{-1} (g_L^{-1})^t) \in \SO_{q_{L^\perp \cap \Lambda_L^\#}}.
\end{align*}
Note that
\begin{align*}
g_L^{t}M_Q \isog(h) M_Q^{-1}(g_L^{-1})^t
= g_L^t \isog(h^{-1})^t (g_L^{-1})^t = (g_L^{-1}\isog(h^{-1})g_L)^t
\end{align*}
which shows that
\begin{align*}
\psi_{2,L}(h) = \pi_2((g_L^{-1}\isog(h^{-1})g_L)^t) = \pi_2(g_L^{-1}\isog(h^{-1})g_L)^t
= \pi_2(\Psi_L(h^{-1}))^t.
\end{align*}

We define the group
\begin{align}\label{eq:def diagonal stab}
\StDiag_L 
= \{(h, \Psi_L(h)) : h \in \StSpin_L\} \subset \Spin_Q \times \SPnk = \SG.
\end{align}
By the definitions above, the morphism
\begin{align*}
\SG \to \bSG,\ (g_1,g_2) \mapsto (g_1,\pi_1(g_2),\pi_2(g_2^{-1})^t)
\end{align*}
induces a morphism
\begin{align*}
\StDiag_L \to \{ (h,\psi_{1,L}(h),\psi_{2,L}(h)): h \in \StSpin_L\}
=: \bStDiag_L \subset \bSG
\end{align*}
which is in fact an isogeny.

\section{The dynamical version of the theorem in codimension at least 3}\label{sec:dimatleast4}

As mentioned in the introduction, our aim is to translate the main theorems into a statement concerning weak${}^\ast$ limits of orbit measures on an adequate adelic homogeneous space. 
In this and the next section we shall establish these equidistribution theorems for orbit measures.
This section treats the case $k, n-k \geq 3$.

In the following we call a sequence of subspaces $L_i \in \Grk(\Q)$ \emph{admissible} if
\begin{enumerate}
\item \label{item: c1 admissibility} $\disc_Q(L_i) \to \infty$ as $i \to \infty$,
\item \label{item: c2 admissibility}$\disc(\tilde{q}_{L_i}) \to \infty$ as $i \to \infty$,
\item \label{item: c3 admissibility}$\disc(\tilde{q}_{L_i^\perp}) \to \infty$ as $i \to \infty$, and
\item \label{item: c4 admissibility}there exists a prime $p$ such that $\StSpin_{L_i}(\Q_p)$ is strongly isotropic for all $i$.
\end{enumerate}

This section establishes the following theorem.
Conjecturally, an analogous version should hold when $k=2$ or $n-k =2$ (see Remark~\ref{rem:gridsIlya}).

\begin{Thm}\label{thm:dyn-simplyconnected}
Let $L_i \in \Grk(\Q)$ be an admissible sequence of rational subspaces (with a choice of lattice $\Lambda_{L_i}$ as in \S\ref{sec:diagemb}), let $g_i \in \SG(\R)$ and let $\mu_i$ be the Haar probability measure on the closed orbit
\begin{align*}
g_i\StDiag_{L_i}(\A) \SG(\Q) \subset \rquot{\SG(\A)}{\SG(\Q)}.
\end{align*}
Then $\mu_i$ converges to the Haar probability measure on $\rquot{\SG(\A)}{\SG(\Q)}$ as $i \to \infty$.
\end{Thm}
The rest of the section is devoted to proving Theorem~\ref{thm:dyn-simplyconnected}.
We remark that the notion of admissible sequences here is an ad hoc notion which appeared in other instances (see e.g.~\cite{2in4}) to assert a similar goal. 
The assumptions \eqref{item: c1 admissibility}--\eqref{item: c3 admissibility}
in the definition of admissibility are in fact necessary for the above theorem to hold while \eqref{item: c4 admissibility} can conjecturally be removed.

\subsection{A general result on equidistribution of packets}\label{sec:gorodnikoh}

The crucial input to our results is an $S$-arithmetic extension of a theorem of Mozes and Shah \cite{mozesshah} by Gorodnik and Oh \cite{gorodnikoh}. We state a version of it here for the reader's convenience.

Let $\mathsf{G}$ be a simply-connected connected semisimple algebraic group defined over $\Q$ and $Y_\A = \mathsf{G}(\A)/\mathsf{G}(\Q)$. 
Let $W$ be a compact open subgroup of $\mathsf{G}(\A_f)$. 
We denote by $C_c(Y_\A, W)$ the set of all continuous compactly supported functions on $Y_\A$ which are $W$ invariant.
Consider a sequence $(\mathsf{H}_i)_{i\in \N}$ of connected semisimple subgroups of $\mathsf{G}$ and let $\mu_i$ denote the Haar probability measure on the orbit $\mathsf{H}_i(\A)^+ \mathsf{G}(\Q) \subset Y_\A$ where $\mathsf{H}_i(\A)^+$ is the image of the adelic points of the simply connected cover of $\mathsf{H}_i$ in $\mathsf{H}_i(\A)$.
 For given $g_i \in \mathsf{G}(\A)$ we are interested in the weak* limits of the sequence of measures $g_i\mu_i$.

\begin{Thm}[{Gorodnik-Oh \cite[Theorem 1.7]{gorodnikoh}}]
\label{Theorem: GorodnikOh}
Assume that there exists a prime $p$ such that $\mathsf{H}_i$ is strongly isotropic at $p$ for all $i \in \N$. Then, for any  weak${}^\ast$ limit of the sequence $(g_i\mu_i)$ with $\mu(Y_\A) = 1$, there exists a connected $\Q$-group $\mathsf{M}< \mathsf{G}$ such that the following hold:
\begin{enumerate}[(1)]
\item For all $i$ large enough, there exist $\delta_i \in \mathsf{G}(\Q)$ such that:
\[ \delta_i^{-1} \mathsf{H}_i \delta_i \subset \mathsf{M}. \]
\item \label{item:item2-go} For any compact open subgroup $W$ of $\mathsf{G}(\A_f)$ there exists a finite index normal subgroup $M_0 = M_0(W)$ of $\mathsf{M}(\A)$ and $g \in \mathsf{G}(\A)$ such that $\mu$ agrees with the Haar probability measure on $gM_0 \mathsf{G}(\Q)$ when restricted to $C_c(Y_\A,W)$.
Moreover, there exists $h_i \in \mathsf{H}_i(\A)^+$ such that $g_i h_i \delta_i \rightarrow g$ as $i \rightarrow \infty$.
\item If the centralizers of $\mathsf{H}_i$ are $\Q$-anisotropic for all $i \in \N$, then $\mathsf{M}$ is semisimple.
Moreover, for any compact open subgroup $W$, $M_0 = M_0(W)$ in \ref{item:item2-go} contains $\mathsf{M}(\A)^+ \mathsf{M}(\Q)$.
\end{enumerate}
\end{Thm}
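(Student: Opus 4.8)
Since this statement is quoted from \cite{gorodnikoh}, I only indicate the shape of a proof; it is an $S$-arithmetic incarnation of the Mozes--Shah principle \cite{mozesshah}, resting on three pillars: Ratner's measure classification theorem \cite{ratner91-measure,ratner-p-adic} in its $S$-arithmetic form (Margulis--Tomanov), the quantitative non-divergence of unipotent flows (Dani--Margulis, and its $S$-arithmetic version of Kleinbock--Tomanov), and the Dani--Margulis linearization technique.

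The plan is first to reduce everything to a statement about unipotent invariance. Since $\mathsf{H}_i$ is semisimple and strongly isotropic at $p$, the group $\mathsf{H}_i(\A)^+$ is topologically generated by images of one-parameter $\Q_v$-unipotent subgroups ($v$ ranging over the places, with a genuinely non-compact, hence non-trivially-unipotently-generated, contribution at $p$); consequently each $\mu_i$ is invariant under a family $\mathcal{U}_i$ of such unipotents. Passing to a subsequence, I would arrange that the relevant Lie-theoretic data — the conjugacy type of $\mathrm{Lie}(\mathsf{H}_i)$ inside a suitable Grassmann bundle, and the directions of the generating unipotents — converge. By the $S$-arithmetic quantitative non-divergence estimates the family $(g_i\mu_i)$ is uniformly non-divergent, so any weak${}^\ast$ limit $\mu$ with $\mu(Y_\A)=1$ is an honest probability measure with no escape of mass toward the cusps.

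The core of the argument is to show that $\mu$ is homogeneous and to locate the smallest $\Q$-subgroup $\mathsf{M}$ that, up to $\mathsf{G}(\Q)$-conjugacy, eventually contains every $\mathsf{H}_i$. Since $\mu$ is invariant under a limiting family of unipotents, Ratner's theorem shows its ergodic components are homogeneous; to upgrade this one runs the linearization argument. For each proper $\Q$-subgroup one forms the associated "singular set" in $Y_\A$, and the Dani--Margulis linearization lemma shows that a $\mathsf{H}_i$-orbit cannot carry a definite proportion of its mass near such a set unless $\mathsf{H}_i$ is itself trapped — after conjugation by some $\delta_i \in \mathsf{G}(\Q)$ — inside that subgroup. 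Iterating this dichotomy along a descending chain of subgroups simultaneously produces $\mathsf{M}$, the elements $\delta_i$ with $\delta_i^{-1}\mathsf{H}_i\delta_i \subset \mathsf{M}$, and the fact that $\mu$ is a Haar measure on an $\mathsf{M}$-type orbit; because strong approximation may fail at the finite places, the precise conclusion is only that $\mu$ agrees on $C_c(Y_\A,W)$ with the Haar measure on a coset $gM_0\mathsf{G}(\Q)$, where $M_0$ is a finite-index normal subgroup of $\mathsf{M}(\A)$ depending on $W$. To extract the translation relation one writes
\begin{align*}
g_i\mathsf{H}_i(\A)^+\mathsf{G}(\Q) = (g_i\delta_i)(\delta_i^{-1}\mathsf{H}_i\delta_i)(\A)^+\mathsf{G}(\Q) \subset (g_i\delta_i)\mathsf{M}(\A)\mathsf{G}(\Q)
\end{align*}
and compares $\mu_i$ with the $\mathsf{M}$-invariant measure on the ambient orbit; minimality of $\mathsf{M}$ forces equidistribution inside it, and matching the limit orbit with $gM_0\mathsf{G}(\Q)$ supplies $h_i\in\mathsf{H}_i(\A)^+$ with $g_ih_i\delta_i\to g$. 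For part (3), $\Q$-anisotropy of the centralizers of the $\mathsf{H}_i$ prevents $\mathsf{M}$ from acquiring a nontrivial $\Q$-split central torus, so $\mathsf{M}$ is semisimple, and for semisimple $\mathsf{M}$ strong approximation for the simply connected cover controls the component groups, yielding $M_0\supset \mathsf{M}(\A)^+\mathsf{M}(\Q)$.

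The hard part is the third paragraph: making the $S$-arithmetic linearization estimates uniform, carrying out the induction on the poset of proper $\Q$-subgroups, and carefully tracking both the rational conjugating elements $\delta_i$ and the component groups at the finite places — this last point being precisely why one obtains only a finite-index $M_0 = M_0(W)$ and must restrict attention to $W$-invariant test functions.
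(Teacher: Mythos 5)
This statement is imported verbatim (specialized to simply connected $\mathsf{G}$) from Gorodnik--Oh, and the paper offers no proof of it, so there is no internal argument to compare your sketch against. Your outline does match the strategy of the cited proof: $S$-arithmetic Ratner/Margulis--Tomanov measure classification, Dani--Margulis/Kleinbock--Tomanov non-divergence, and linearization in the style of Mozes--Shah, with the compact open subgroup $W$ and the finite-index group $M_0(W)$ accounting for the failure of strong approximation issues at the finite places. One small imprecision worth noting: it is not $\mathsf{H}_i(\A)^+$ as a whole that is topologically generated by unipotent one-parameter subgroups (at places where $\mathsf{H}_i$ is anisotropic the local factors are compact and contain no nontrivial unipotents); the dynamically relevant input is the fixed prime $p$ of strong isotropicity, which guarantees that every $\Q_p$-normal subgroup of $\mathsf{H}_i(\Q_p)$ is non-compact and hence that $\mathsf{H}_i(\Q_p)^+$ supplies, uniformly in $i$, the unipotent invariance at a single place on which the non-divergence and rigidity machinery is run.
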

We remark that the theorem as stated in \cite{gorodnikoh} does not assume that $\mathsf{G}$ is simply connected; we will however only need this case.
\subsection{Proof of Theorem~\ref{thm:dyn-simplyconnected}}\label{sec:proofdyn}
We shall prove Theorem~\ref{thm:dyn-simplyconnected} in several steps and start with a short overview. 
Note that we have a morphism
\begin{align*}
\SG \to \bSG = \Spin_Q \times \SL_k \times \SL_{n-k}
\end{align*}
given by mapping $g \in \SPnk$ to $(\pi_1(g),\pi_2(g^{-1})^t)$ and $\Spin_Q$ to itself via the identity map (see also \S\ref{sec:diagemb}).
The first step of the theorem establishes equidistribution of the projections to the respective homogeneous quotients for $\Spin_Q, \SL_k, \SL_{n-k}$ (henceforth called 'individual equidistribution'). The second step is the analogous statement  for~$\bSG$.
Note that the admissibility assumption on the sequence of subspaces $L_i$ is used for individual equidistribution and in fact, the different conditions \eqref{item: c1 admissibility}--\eqref{item: c3 admissibility} imply the corresponding individual equidistribution statements (i.e.~\eqref{item: c1 admissibility} implies equidistribution in the homogeneous quotient $\rquot{\Spin_Q(\A)}{\Spin_Q(\Q)}$ etc.).

To vaguely outline the argument here, consider a sequence of orbits 
\begin{align*}
g_i' \StSpin_{L_i}(\A)\Spin_Q(\Q) \subset \rquot{\Spin_Q(\A)}{\Spin_Q(\Q)}.
\end{align*}
As the groups $\StSpin_{L_i}$ are maximal subgroups, the theorem of Gorodnik and Oh above implies that either the orbits are equidistributed or that there exist lattice elements $\delta_i$ so that $\delta_i\StSpin_{L_i}\delta_i^{-1}$ is eventually independent of $i$.
In the latter case, we also know that the lattice elements are up to a bounded amount in the stabilizer group; this will be shown to contradict the assumption that $\disc_Q(L_i) \to \infty$.

\subsubsection{Applying Theorem~\ref{Theorem: GorodnikOh}}

Consider the subgroup $\mathbf{J} = \Spin_Q \times \SL_n$. Note that $\mathbf{J}$ is semisimple and simply connected so that we may apply Theorem~\ref{Theorem: GorodnikOh} given a suitable sequence of subgroups.

The groups $\StSpin_{L_i}$ are potentially not simply connected so that a little more care is needed in applying Theorem~\ref{Theorem: GorodnikOh} to the orbit measures $\mu_i$.
We fix for any $i$ some $h_i \in \StDiag_{L_i}(\A)$ and consider the orbit measures on $g_ih_i \StDiag_{L_i}(\A)^+\SG(\Q)$. 
In view of the theorem, it suffices to show that these converge to the Haar probability measure on $\rquot{\SG(\A)}{\SG(\Q)}$.
Indeed, by disintegration the Haar measure on $g_i\StDiag_{L_i}(\A) \SG(\Q)$ is the integral over the Haar measures on $g_ih_i \StDiag_{L_i}(\A)^+\SG(\Q)$ when $h_i$ is integrated with respect to the Haar probability measure on the compact group $\StDiag_{L_i}(\A)/\StDiag_{L_i}(\A)^+$. 
In other words, the Haar measure on $g_i\StDiag_{L_i}(\A) \SG(\Q)$ is a convex combination of the Haar measures on the orbits $g_ih_i\StDiag_{L_i}(\A)^+ \SG(\Q)$.
To simplify notation, we replace $g_i$ by $g_ih_i$ in order to omit $h_i$.
Furthermore, we abuse notation and write $\mu_i$ for these ''components'' of the original orbit measures.

We fix a compact open subgroup $W$ of $\SG(\A_f)$ in view of (2) b) in Theorem~\ref{Theorem: GorodnikOh} and an odd prime $p$ as in the definition of admissibility of the sequence $(L_i)_i$.

Let $\mu$ be any weak${}^\ast$-limit of the measures $\mu_i$. Note that $\mu$ is a probability measure. Indeed, the pushforward of the measures $\mu_i$ to $\Spin_Q(\A)/\Spin_Q(\Q)$ has to converge to a probability measure as $\Spin_Q(\A)/\Spin_Q(\Q)$ is compact.
We let $\mathbf{M} < \mathbf{J}$ be as in Theorem~\ref{Theorem: GorodnikOh}.
As $g_i \in \SG(\A)$ and $\StDiag_{L_i} < \SG$ for all $i$, the support of the measures $\mu_i$ is contained in $\SG(\A) \mathbf{J}(\Q) \simeq \rquot{\SG(\A)}{\SG(\Q)}$. Thus $\mathbf{M}< \SG$.

\begin{claim*}
It suffices to show that $\mathbf{M} = \SG$.
\end{claim*}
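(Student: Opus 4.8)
The plan is to deduce from $\mathbf{M} = \SG$ that every weak$^\ast$ limit $\mu$ of the sequence $(\mu_i)$ coincides with the Haar probability measure $m$ on $\rquot{\SG(\A)}{\SG(\Q)}$, and then to upgrade this to $\mu_i \to m$ by a soft compactness argument. For the upgrade: we have already observed that no mass escapes to infinity (the pushforward of $\mu_i$ to the compact quotient $\Spin_Q(\A)/\Spin_Q(\Q)$ stays a probability measure in the limit), so $(\mu_i)$ is tight; hence every subsequence of $(\mu_i)$ has a further subsequence converging weak$^\ast$ to a probability measure, which by the claim must be $m$, and therefore $\mu_i \to m$. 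So it remains to show: if $\mathbf{M} = \SG$ then $\mu = m$.

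Assuming $\mathbf{M} = \SG$, I would invoke Theorem~\ref{Theorem: GorodnikOh}(2) with the fixed compact open subgroup $W < \SG(\A_f)$: it produces a finite-index normal subgroup $M_0 = M_0(W) \trianglelefteq \SG(\A)$ such that $\mu$ agrees, on $C_c(Y_\A, W)$, with the Haar probability measure on a single $M_0$-orbit. Since the support of every $\mu_i$ — and hence of $\mu$ — lies in the closed subset $\rquot{\SG(\A)}{\SG(\Q)} \simeq \SG(\A)\mathbf{J}(\Q)/\mathbf{J}(\Q)$ of $Y_\A$ (closed because $\SG$ has no nontrivial $\Q$-characters), this $M_0$-orbit lies inside $\rquot{\SG(\A)}{\SG(\Q)}$, where $M_0$ acts by left translation; being clopen, it equals $\supp(\mu)$ and is in particular independent of $W$. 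As $\bigcup_W C_c(Y_\A, W)$ is dense, it follows that $\mu$ is the unique $M_0$-invariant probability measure on this orbit. Consequently $\mu = m$ provided the orbit is all of $\rquot{\SG(\A)}{\SG(\Q)}$, i.e.\ provided $\SG(\Q)$ surjects onto the finite group $\SG(\A)/M_0$.

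I would establish this surjectivity factor by factor in $\SG = \Spin_Q \times \SPnk$. For $\SPnk$ it is strong approximation: $\SPnk$ is perfect, with $\Q$-isotropic simply connected reductive quotient $\SL_k \times \SL_{n-k}$ (here $k, n-k \geq 3$) and unipotent radical, so $\SPnk(\Q)$ is dense in $\SPnk(\A_f)$ and hence surjects onto every finite quotient of $\SPnk(\A)$. The factor $\Spin_Q$ is the delicate one, since $Q$ positive definite makes $\Spin_Q$ $\Q$-anisotropic and plain strong approximation fails there; I would handle it by exploiting that $W$ is arbitrary together with the extra invariance coming from the auxiliary prime $p$ (admissibility condition~(4) makes $\StSpin_{L_i}(\Q_p)$ non-compact, so the $p$-adic factor of the orbits spreads out), combined with the maximality of $\StSpin_{L_i}$ in $\Spin_Q$ (Proposition~\ref{StabilizerMaximallyConnectedInSOQ}) and Proposition~\ref{Proposition: Stabilizer Uniquely Determines Subspace} — the latter ruling out the pushforward to $\Spin_Q(\A)/\Spin_Q(\Q)$ concentrating on a fixed lower-dimensional orbit as $\disc_Q(L_i)\to\infty$ — or, alternatively, by reassembling: the original packet measure on $g_i\StDiag_{L_i}(\A)\SG(\Q)$ is the average over the finite group $\StDiag_{L_i}(\A)/\StDiag_{L_i}(\A)^+$ of its component measures, whose weak$^\ast$ limits exhaust the $M_0$-cosets, so the average converges to the uniform combination $m$. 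I expect this last point — controlling the finite-index normal subgroups $M_0(W)$ on the anisotropic factor $\Spin_Q$ — to be the main obstacle; the remaining ingredients (tightness, closedness of the $\SG$-orbit in $Y_\A$, and strong approximation for $\SPnk$) are routine.
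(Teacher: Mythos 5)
Your overall scaffolding (tightness, invoking Theorem~\ref{Theorem: GorodnikOh}(2) with the subgroup $M_0(W)$, reducing to the statement that the single $M_0$-orbit is all of $\rquot{\SG(\A)}{\SG(\Q)}$) matches the intended structure, but the proposal stops exactly at the crux, and you flag this yourself: you never prove anything about the groups $M_0(W)$ on the $\Spin_Q$-factor. The paper closes this in one line with a fact you are missing: $\SG(\A)$ has \emph{no proper finite-index subgroups at all} (cited there as \cite[Theorem 6.7]{BorelHomAbstr}; the local input is that every $\Spin_Q(\Q_v)$ at a finite place is isotropic since $n\geq 6$, hence generated by unipotents, while $\Spin_Q(\R)$ is compact connected hence divisible, and similarly the $\SL$-factors and the unipotent radical of $\SPnk$ contribute nothing). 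Consequently $M_0(W)=\SG(\A)$ for every $W$, the $M_0$-orbit is automatically the full space, and $\mu$ agrees with the Haar measure on every $W$-invariant function; letting $W$ shrink finishes the claim. In particular there is no need to show that $\SG(\Q)$ surjects onto $\SG(\A)/M_0$, nor to invoke strong approximation for $\SPnk$, nor to confront the anisotropy of $\Spin_Q$ at $\infty$.

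The substitutes you sketch for the $\Spin_Q$-factor do not close the gap. Maximality of $\StSpin_{L_i}$, Proposition~\ref{Proposition: Stabilizer Uniquely Determines Subspace} and the growth of $\disc_Q(L_i)$ are the tools used (earlier, and later in Proposition~\ref{prop:projontofactors of Levi}) to rule out intermediate groups $\mathbf{M}\subsetneq \SG$; they say nothing about an abstract finite-index normal subgroup $M_0(W)$ of $\SG(\A)$, which is what remains once $\mathbf{M}=\SG$ is assumed. Note also that Theorem~\ref{Theorem: GorodnikOh}(3), which would give $M_0\supset \mathsf{M}(\A)^+\mathsf{M}(\Q)$ and let a strong-approximation argument at the auxiliary prime $p$ run, is not available here: the centralizer of $\StDiag_{L_i}$ in $\mathbf{J}$ contains a $\Q$-split torus (e.g.\ $\diag(tI_k,sI_{n-k})$ inside the $\SL_n$-factor), so its hypothesis fails. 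Finally, the "reassembling" idea changes the statement being proved --- the claim concerns a single component measure after the replacement of $g_i$ by $g_ih_i$, not the average over $\StDiag_{L_i}(\A)/\StDiag_{L_i}(\A)^+$ --- and it assumes without justification that the weak${}^\ast$ limits of the components distribute uniformly over the $M_0$-cosets. So as written the argument is incomplete at precisely the point you call the main obstacle; the missing ingredient is the absence of proper finite-index subgroups of $\SG(\A)$.
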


\begin{proof}[Proof of the claim]
Suppose that $\mathbf{M} = \SG$.
Let $M_0 = M_0(W)$ be as in Theorem~\ref{Theorem: GorodnikOh}. 
Since $\SG(\A)$ has no proper finite-index subgroups \cite[Theorem 6.7]{BorelHomAbstr}, we have $M_0 = \SG(\A)$ (independently of $W$). Therefore, for any $W$-invariant continuous compactly-supported function $f$, the integral $\mu(f)$ agrees with the integral against the Haar measure on $\SG(\A)/\SG(\Q)$. But any continuous compactly-supported function is invariant under some compact open subgroup $W$, hence the claim follows.
\end{proof}

We now focus on proving that $\mathbf{M} = \SG$.
By Theorem~\ref{Theorem: GorodnikOh} there exist $\delta_i \in \SG(\Q)$ such that $\delta_i^{-1}\StDiag_{L_i}\delta_i < \mathbf{M}$ for all $i \geq i_0$. Furthermore, we fix $g \in \SG(\A)$ as well as $\hat{h}_i = (h_i, \Psi_{L_i}(h_i))\in \StDiag_{L_i}(\A)^+$ as in Theorem~\ref{Theorem: GorodnikOh} such that
\begin{align}\label{eq:GO-convergence for latticeel}
g_i \hat{h}_i \delta_i \to g.
\end{align}

\subsubsection{Individual equidistribution of subspaces and shapes}
Consider the morphism
\begin{align}\label{eq:defLeviG}
\SG \to \bSG = \Spin_Q \times \SL_k \times \SL_{n-k}.
\end{align}
In the following step of the proof, we show that the image $\bar{\mathbf{M}}$ of the subgroup $\mathbf{M}$ via \eqref{eq:defLeviG} projects surjectively onto each of the factors of $\bSG$.

\begin{Prop}\label{prop:projontofactors of Levi}
The morphism obtained by restricting the projection of $\bSG$ onto any almost simple factor of $\bSG$ to $\mathbf{M}$ is surjective.
\end{Prop}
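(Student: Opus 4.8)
The plan is to leverage the three equidistribution inputs encoded in admissibility, each of which controls the projection of $\mathbf{M}$ (equivalently $\bar{\mathbf{M}}$) onto one of the three almost simple factors of $\bSG = \Spin_Q \times \SL_k \times \SL_{n-k}$. Write $p_0: \bSG \to \Spin_Q$, $p_1: \bSG \to \SL_k$, $p_2: \bSG \to \SL_{n-k}$ for the three projections and let $\bar{\mathbf{M}}$ be the image of $\mathbf{M}$ under \eqref{eq:defLeviG}; since \eqref{eq:defLeviG} is an isogeny onto its image, it suffices to show each $p_j(\bar{\mathbf{M}})$ is all of the corresponding factor. The key structural fact is that $\StSpin_{L_i}$, restricted to $L_i$ resp.\ $L_i^\perp$, surjects (via $\psi_{1,L_i}$ resp.\ $\psi_{2,L_i}$) onto $\SO_{q_{L_i\cap\Lambda_{L_i}}}$ resp.\ $\SO_{q_{L_i^\perp\cap\Lambda_{L_i}^\#}}$, and these $\SO$'s are \emph{maximal} in $\SL_k$ resp.\ $\SL_{n-k}$ (stated at the end of \S\ref{sec:groupsdef}), while $\Spin_Q$ is absolutely almost simple (for the relevant dimensions) so $p_0(\bar{\mathbf{M}}) = \Spin_Q$ as soon as it is nontrivial.

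First I would treat the $\Spin_Q$-factor. By Theorem~\ref{Theorem: GorodnikOh}(1) we have $\delta_i^{-1}\StDiag_{L_i}\delta_i \subset \mathbf{M}$ for large $i$, so $\delta_i^{-1}\StSpin_{L_i}\delta_i \subset p_0(\mathbf{M})$ after projecting the first coordinate; hence $p_0(\mathbf{M})$ contains a nontrivial connected normal subgroup of $\Spin_Q$ (it contains a conjugate of $\StSpin_{L_i}$, which is nontrivial and semisimple by the case analysis after \eqref{eq:ConnectedComponentOfStabilizerAsProduct}), and since $\Spin_Q$ is almost simple in the range $k,n-k\ge 3$ we get $p_0(\bar{\mathbf M}) = \Spin_Q$. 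Next, for the $\SL_k$-factor: $p_1(\bar{\mathbf{M}})$ is a connected $\Q$-group containing $\delta_i^{-1}$-conjugates of $\psi_{1,L_i}(\StSpin_{L_i}) = \SO_{q_{L_i\cap\Lambda_{L_i}}}$, which is a maximal connected subgroup of $\SL_k$. So $p_1(\bar{\mathbf{M}})$ is either (a conjugate of) this $\SO$ or all of $\SL_k$. To rule out the former I invoke condition~\ref{item: c2 admissibility} of admissibility ($\disc(\tilde q_{L_i})\to\infty$) together with the $S$-arithmetic Duke-type equidistribution of torus/packets in $\shapesp{k}$: if $p_1(\bar{\mathbf M})$ were conjugate into a single $\SO$-subgroup then, pulling back through the identification in Lemma~\ref{lem:grids to shapes}, the shapes $[L_i(\Z)]$ would be confined to a single closed $\SO_{k}$-orbit of bounded volume, contradicting that the associated primitive forms have discriminant tending to infinity (the orbit's volume is comparable to $\disc(\tilde q_{L_i})$ up to the bounded factors from \S\ref{sec:Lambda_L simpleconseq}). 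Symmetrically, condition~\ref{item: c3 admissibility} and Lemma~\ref{lem:prop of Lambda_L} (identifying $L^\perp\cap\Lambda_L^\# = L^\perp(\Z)$) give $p_2(\bar{\mathbf M}) = \SL_{n-k}$.

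The main obstacle, and the step requiring the most care, is the comparison of "volume of the orbit cut out by $p_j(\bar{\mathbf M})$" with "$\disc(\tilde q_{L_i})$": one must check that if $p_1(\bar{\mathbf M})$ were a proper (maximal) subgroup, then the corresponding $\SG$-orbit would, after projection to the moduli/shape picture, land in a finite union of closed orbits whose covolumes stay bounded independently of $i$ — so that the growth of $\disc(\tilde q_{L_i})$ forces a genuine contradiction. This uses the explicit conjugating elements $\delta_i\in\SG(\Q)$, the denominator bounds for $g_L$ from \S\ref{sec:notationforsubsp}, and Lemma~\ref{lem:loc continuity disc} to control how discriminants change under the $\Q$-rational conjugations; the bounded-commensurability remarks of \S\ref{sec:Lambda_L simpleconseq} ensure that passing between $q_L$, $\tilde q_L$, $\tau_{L^\perp}$ etc.\ only costs constants. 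Once this volume/discriminant dichotomy is in place, maximality of $\SO_{q_{L_i\cap\Lambda_{L_i}}}$ in $\SL_k$ (and of $\SO$ in $\SL_{n-k}$) closes each factor, and the proposition follows.
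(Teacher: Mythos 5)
Your overall skeleton (project to each almost simple factor, use maximality of the relevant stabiliser/orthogonal group, and derive a contradiction with admissibility) is the same as the paper's, but the two steps where the actual work happens are not correct as written. For the $\Spin_Q$-factor you argue that $p_0(\mathbf{M})$ contains a conjugate of $\StSpin_{L_i}$, call this a ``nontrivial connected normal subgroup'' of $\Spin_Q$, and conclude by almost simplicity. A conjugate of $\StSpin_{L_i}$ is \emph{not} normal in $\Spin_Q$, and almost simplicity says nothing about subgroups containing a non-normal subgroup: a priori $p_0(\mathbf{M})$ could be exactly $\delta_{i,1}^{-1}\StSpin_{L_i}\delta_{i,1}$, which is a proper maximal subgroup. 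Ruling out precisely this possibility is the entire content of the first case in the paper: one uses maximality (Proposition~\ref{StabilizerMaximallyConnectedInSOQ}) to reduce to the case $p_0(\mathbf{M})=\delta_{i,1}^{-1}\StSpin_{L_i}\delta_{i,1}$ for all large $i$, then Proposition~\ref{Proposition: Stabilizer Uniquely Determines Subspace} to get rational elements $\gamma_i$ with $\gamma_i.L=L_i$ (or $\gamma_i.L^\perp=L_i$), and finally the convergence $g_ih_i\delta_i\to g$ from Theorem~\ref{Theorem: GorodnikOh}, i.e.\ \eqref{eq:GO-convergence for latticeel}, to show $\disc_Q(L_i)$ would then be eventually constant, contradicting condition~\eqref{item: c1 admissibility} of admissibility. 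Your first-factor argument uses none of these ingredients, and in particular never uses $\disc_Q(L_i)\to\infty$, so it cannot be correct.

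For the $\SL_k$- and $\SL_{n-k}$-factors you correctly identify the maximality of the special orthogonal groups as the reduction step, but the mechanism you propose to exclude the proper case is both off-target and, by your own admission in the last paragraph, not carried out. In this proposition $k,n-k\geq 3$, so the groups $\SO_{q_{L_i\cap\Lambda_{L_i}}}$ are semisimple and no torus packets, Duke-type equidistribution, or volume lower bounds in $\shapesp{k}$ enter the argument at all; moreover the statement that confinement to ``a single closed orbit of bounded volume'' contradicts $\disc(\tilde q_{L_i})\to\infty$ is exactly the comparison you defer, and it cannot be made without controlling the conjugating elements $\delta_i$, which is what the convergence \eqref{eq:GO-convergence for latticeel} is for. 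The paper's actual argument is algebraic: if $p_1(\mathbf{M})$ equals the conjugated orthogonal group for all large $i$, then Proposition~\ref{Proposition: Stabilizer Uniquely Determines Quadratic Form} gives coprime integers $m_i,n_i$ with $m_i\,\gamma_i\tilde q_{i_0}=n_i\,\tilde q_i$; the convergence $g_ih_i\delta_i\to g$ then forces $m_i,n_i$ to take finitely many values and shows $\disc(\tilde q_i)$ stays bounded, contradicting conditions~\eqref{item: c2 admissibility} and~\eqref{item: c3 admissibility} (for the third factor one uses $\pi(\StDiag_{L_i})=\SO_{q_{L_i^\perp\cap\Lambda_{L_i}^\#}}$). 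Without an argument of this kind in each factor, your proposal does not close.
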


\begin{proof}
We prove the proposition for each factor separately.
To ease notation, $\pi$ will denote the projection of $\bSG$ onto the factor in consideration, which we extend to $\SG$ by precomposition.

\textsc{First factor:}
As $\pi(\StDiag_{L_i}) = \StSpin_{L_i}$ we have for each $i$
\begin{align*}
\pi(\delta_i)^{-1} \StSpin_{L_i} \pi(\delta_i) < \pi(\mathbf{M}).
\end{align*}
Since $\StSpin_{L_i}$ is a maximal subgroup of $\Spin_Q$ (see Proposition~\ref{StabilizerMaximallyConnectedInSOQ}), there are two options: either $\pi_{1}(\mathbf{M}) = \Spin_Q$ or $\pi(\delta_i)^{-1} \StSpin_{L_i} \pi(\delta_i) = \pi(\mathbf{M})$ for all $i\geq i_0$.

Suppose the second option holds (as we are done otherwise).
Setting $\gamma_i = \pi(\delta_i\delta_{i_0}^{-1})$ and $L = L_{i_0}$ we have
\begin{align*}
\StSpin_{\gamma_i.L} = \gamma_i\StSpin_{L}\gamma_i^{-1} = \StSpin_{L_i}.
\end{align*}
By Proposition \ref{Proposition: Stabilizer Uniquely Determines Subspace} we have $\gamma_i.L = L_i$ or $\gamma_i.L^\perp = L_i$; changing to a subsequence and increasing $i_0$ we may suppose that the former option holds for all $i \geq i_0$.
By \eqref{eq:GO-convergence for latticeel} there exist $h_i \in \StSpin_{L_i}(\A)$ such that $\pi(g_i) h_i \gamma_i \to \pi(g')$ for some $g' \in \SG(\A)$.
Roughly speaking, this implies that $L_i = h_i\gamma_i.L \to \pi(g).L$ as $\Q_p$-subspaces for any prime $p$ contradicting the discriminant condition. More precisely, let $\varepsilon_i\to e$ be such that $\pi(g_i) h_i \gamma_i= \varepsilon_i\pi(g')$. Then for any prime $p$ the local discriminant gives
\begin{align*}
\disc_{p,Q}(L_i) = \disc_{p,Q}(h_{i,p}\gamma_i.L) 
= \disc_{p,Q}(\varepsilon_{i,p}\pi(g_p').L)
\end{align*}
If $i$ is large enough such that $\varepsilon_i \in \Spin_Q(\R\times \widehat{\Z})$, we have
\begin{align*}
\disc_Q(L_i) = \prod_p p^{\nu_p(\disc_{p,Q}(L_i))}
= \prod_pp^{\nu_p(\disc_{p,Q}(\pi(g_p').L))}
\end{align*}
which is constant, contradicting $\disc_Q(L_i) \to \infty$.

\textsc{Second factor:}
The proof is very similar to the first case, so we will be brief. By maximality of special orthogonal groups (Remark~\ref{rem:maximality}) and as 
$\pi(\StDiag_{L_i}) = \SO_{\qsubscr{L_i\cap \Lambda_{L_i}}}$ we may suppose by contradiction that for all $i \geq i_0$
\begin{align*}
\pi(\delta_i)^{-1}\SO_{\qsubscr{L_i \cap \Lambda_{L_i}}}\pi(\delta_i) = \pi(\mathbf{M}).
\end{align*}
Let us simplify notation and write $q_i$ for the least integer multiple of $\qsubscr{L_i \cap \Lambda_{L_i}}$ that has integer coefficients. 
Since $L_i \cap \Lambda_{L_i}$ and $L_i(\Z)$ are commensurable with indices controlled by $\disc(Q)$, we have
$\disc(q_{i}) \asymp \disc(q_{L_i})$ and $\disc(\tilde{q}_{i}) \asymp \disc(\tilde{q}_{L_i})$.
In particular, by our assumption $\disc(\tilde{q}_{i}) \to \infty$ as $i \to \infty$.

Set $\gamma_i = \pi(\delta_i\delta_{i_0}^{-1})\in \SL_k(\Q)$ so that
\begin{align}\label{eq:equalityorthgroups}
\SO_{\gamma_i\tilde{q}_{i_0}} = \SO_{\gamma_iq_{i_0}}
= \gamma_i \SO_{\qsubscr{L_{i_0}}}\gamma_{i}^{-1} 
= \SO_{q_{i}} = \SO_{\tilde{q}_{i}}.
\end{align}
By Proposition \ref{Proposition: Stabilizer Uniquely Determines Quadratic Form} there exist coprime integers $m_i,n_i$ such that
\begin{align*}
m_i \gamma_i\tilde{q}_{i_0} = n_i \tilde{q}_{i}.
\end{align*}
Using \eqref{eq:GO-convergence for latticeel} write $\pi(g_i) h_i \gamma_i = \varepsilon_i \pi(g')$ for some $g' \in \SG(\A)$ and $\varepsilon_i \to e$. 
By \eqref{eq:equalityorthgroups}, $h_i(\gamma_i\tilde{q}_{i_0}) = \gamma_i\tilde{q}_{i_0}$. Thus, we have for any prime $p$
\begin{align*}
m_i \varepsilon_{i,p}\pi(g_p')\tilde{q}_{i_0}
= m_i h_{i,p} \gamma_i\tilde{q}_{i_0}
= n_i \tilde{q}_{i}.
\end{align*}
The form $\pi(g_p')\tilde{q}_{i_0}$ is a form over $\Q_p$ with trivial denominators for all but finitely many $p$. 
Applying $\varepsilon_{i,p}$ for large $i$ does not change this. Furthermore, $m_i$ needs to divide all denominators of $\tilde{q}_{i_0}$ over $\Z_p$ for all $i$ as $\tilde{q}_{i}$ is primitive. Hence, $m_i$ can only assume finitely many values and by reversing roles one can argue the same for $n_i$.
For any prime $p$ we have 
\begin{align*}
\disc_{p}(\tilde{q}_{i})
= p^{\ord_p(\frac{m_i}{n_i})} \disc_p(\pi(g_p')\tilde{q}_{i_0}).
\end{align*}
and hence
\begin{align*}
\disc(\tilde{q}_{i}) 
= \tfrac{m_i}{n_i} \prod_{p} p^{\ord_p(\disc_p(\pi(g_p')\tilde{q}_{i_0}))}
\end{align*}
which is a contradiction to $\disc(\tilde{q}_{i}) \to \infty$.

\textsc{Third factor:}
The proof here is the same as for the second factor. We do however point out that the morphism $\SG \to \bSG$ was constructed to satisfy that for any $h \in \StSpin_{L_i}$ we have $\pi((h,\Psi_{L_i}(h)) = \psi_{2,L_i}(h)$ and hence $\pi(\StDiag_{L_i}) = \SO_{q_{L_i^\perp \cap \Lambda_{L_i}^\#}}$.
\end{proof}

\begin{remark}\label{rem:equidistr subspaces}
We recall from the beginning of this section \S\ref{sec:proofdyn} that the first three conditions in admissibility were used in this order for the three factors in the above proof. This has a consequence: If $L_i\in \Grk(\Q)$ is any sequence of subspaces satisfying properties \eqref{item: c1 admissibility} and \eqref{item: c4 admissibility}, then for any $g_i \in \Spin_Q(\R)$ the packets
\begin{align*}
g_i \StSpin_{L_i}(\A) \Spin_Q(\Q) \subset \rquot{\Spin_Q(\A)}{\Spin_Q(\Q)}
\end{align*}
are equidistributed as $i \to \infty$. This can be used to obtain equidistribution of $\HkQD \subset \Gr{k}(\R)$ without any restrictions on the $k$-power free part of $D$ (as opposed to our main theorems in the introduction).
\end{remark}

\subsubsection{Simultaneous equidistribution of subspaces and shapes}\label{sec:simult subsp+shapes}

Proposition~\ref{prop:projontofactors of Levi} shows that the image $\bar{\mathbf{M}}$ of $\mathbf{M}$ under \eqref{eq:defLeviG} satisfies that the projection onto each simple factors of $\bSG$ is surjective.
We claim that this implies $\bar{\mathbf{M}} = \bSG$.

We first show that the projection of $\bar{\mathbf{M}}$ to $\SL_k \times \SL_{n-k}$ is surjective.
Note that any proper subgroup of $\SL_k \times \SL_{n-k}$ with surjective projections is the graph of an isomorphism $\SL_k \to \SL_{n-k}$. In particular, we are done with the intermediate claim if $k \neq n-k$. Suppose that $ k = n-k$ and choose for some $i \geq i_0$ an element $h \in \StSpin_{L_i}$ acting trivially on $L_i$ but not trivially on $L_i^{\perp}$. The projection of $g_{L_i}^{-1}\isog(h)g_{L_i}$ to the first (resp.~the second) $\SL_{k}$ is trivial (resp.~non-trivial); the projection of $\bar{\mathbf{M}}$ to $\SL_k \times \SL_{n-k}$ thus contains elements of the form $(e,g)$ with $g \neq e$. This rules out graphs under isomorphisms and concludes the intermediate claim.

Now note that $\bar{\mathbf{M}}$ projects surjectively onto $\Spin_Q$ and $\SL_k \times \SL_{n-k}$ and that the latter two $\Q$-groups do not have isomorphic simple factors. By a similar argument as above, we deduce that $\bar{\mathbf{M}} = \bSG$.

\subsubsection{Handling the unipotent radical}

We now turn to proving that $\mathbf{M} = \SG$ which concludes the proof of the theorem.
By \S\ref{sec:simult subsp+shapes} we know that $\mathbf{M}$ surjects to $\bSG$. In particular, by the Levi-Malcev theorem there exists some element in the unipotent radical of $\SPnk$
\[ y_C = 
\begin{pmatrix}
I_k & C \\
0  & I_{n-k}
\end{pmatrix} \in \SPnk(\Q)
\]
such that $\mathbf{M}$ contains $\Spin_Q \times y_C \SDnk y_C^{-1}$.
By maximality of the latter group (cf.~Remark~\ref{rem:maximality}), $\mathbf{M}$ is either equal to $\SG$ or we have
\begin{align*}
\mathbf{M} = \Spin_Q \times y_C \SDnk y_C^{-1}.
\end{align*}
Assume by contradiction the latter.
The inclusion $\delta_i^{-1}\StDiag_{L_i}\delta_i \subset \mathbf{M}$ implies that
\begin{align*}
\delta_{2,i}^{-1}g_{L_i}^{-1}\isog(h)g_{L_i} \delta_{2,i} \in y_C \SDnk y_C^{-1}
\end{align*}
where $\delta_{2,i}$ denotes the second coordinate of the element $\delta_i \in \SG(\Q) = \Spin_Q(\Q)\times \SPnk(\Q)$.
Since $y_C \SDnk y_C^{-1}$ stabilizes two subspaces, namely $y_C L_0 = L_0$ and $L' = y_C(\{(0,\ldots,0)\}\times \Q^{n-k})$, the conjugated group $g_{L_i}\delta_{i,2}y_C \SDnk y_C^{-1} \delta_{i,2}^{-1}g_{L_i}^{-1}$ fixes the subspaces
\begin{align*}
g_{L_i}\delta_{i,2}L_0 
= g_{L_i}L_0 = L_i \quad \text{and} 
\quad g_{L_i}\delta_{i,2}L'.
\end{align*}
As $\StSpin_{L_i}$ fixes exactly the subspaces $L_i,L_i^\perp$, we must have
\begin{align}\label{eq:proofdyngrids1}
L_i^\perp = g_{L_i}\delta_{i,2}L'
\end{align} 
for all $i$.
We denote by $v_{1}^i,\ldots,v_n^i$ the columns of $g_{L_i}$ which is a basis of $\Lambda_{L_i}$ and by $w_1^i,\ldots,w_n^i$ its dual basis.
Recall that $w_{k+1}^i,\ldots, w_n^i$ form a basis of $\Lambda_{L_i}^\# \cap L_i^{\perp}$.
By \eqref{eq:proofdyngrids1}, there exists a rational number $\alpha_i\in \Q^\times$ such that 
\begin{align}\label{eq:proofdyngrids2}
\alpha_i (w^{i}_{k+1}\wedge \ldots \wedge w^{i}_n) = g_{L_i}\delta_{i,2}y_C (e_{k+1} \wedge \ldots \wedge e_n). 
\end{align}
To simplify notation, we set $\eta_i = \delta_{i,2}y_C$. 

We first control the numbers $\alpha_i$.
From \eqref{eq:GO-convergence for latticeel} we know that there are $h_i \in \StSpin_{L_i}$ such that
\begin{align*}
g_{2,i} g_{L_i}^{-1} \isog(h_i) g_{L_i} \eta_i \to g'
\end{align*}
for some $g' \in \SPnk(\A)$. For $i$ large enough, there exist $\varepsilon_i \in \SPnk(\RZ)$ with $g_{2,i} g_{L_i}^{-1} \isog(h_i) g_{L_i} \eta_i= \varepsilon_i g'$.
We now fix a prime $p$ so that $\isog(h_{i,p}) g_{L_i} \eta_i= g_{L_i}\varepsilon_{i,p} g'_p$ (as $g_{2,i} \in \SG(\R)$).
Applying $\isog(h_{i,p})$ to \eqref{eq:proofdyngrids1} we obtain
\begin{align*}
\alpha_i (w^{i}_{k+1}\wedge \ldots \wedge w^{i}_n) = g_{L_i}\varepsilon_{i,p} g'_p (e_{k+1} \wedge \ldots \wedge e_n).
\end{align*}
Considering that the vectors $w^{i}_{k+1}\wedge \ldots \wedge w^{i}_n$ and $e_{k+1} \wedge \ldots \wedge e_n$ are primitive (see e.g.~\cite[Ch.~1, Lemma 2]{casselsGeoNum}) and that $g_{L_i}$ and $g_p'$ have bounded denominators, this shows that the denominators and numerators of the numbers $\alpha_i$ are bounded independently of $i$.

We now compute the discriminant of the lattice spanned by $w^{i}_{k+1}, \ldots, w^{i}_n$ in two ways. First, note that as $w^{i}_{k+1},\ldots, w^{i}_n$ is a basis of $\Lambda_{L_i}^\# \cap L_i^\perp$, the discriminant in question is equal to the discriminant of $\Lambda_{L_i}^\# \cap L_i^\perp$ and hence $\asymp \disc_Q(L_i)$. 
For the second way, observe that by \eqref{eq:proofdyngrids2} the discriminant of the lattice spanned by $w^{i}_{k+1}, \ldots, w^{i}_n$ is given by $\alpha_i^{-1}$ multiplied by the determinant of the matrix with entries\footnote{
One conceptual way to see this is the following: the bilinear form $\langle \cdot,\cdot \rangle_Q$ induces a bilinear form $\langle \cdot,\cdot \rangle_{\bigwedge^{n-k}Q}$ on the wedge-product $\bigwedge^{n-k}\Q^n$ by defining it on pure wedges through
\begin{align*}
\langle v_1\wedge \ldots \wedge v_{n-k},w_1\wedge \ldots \wedge w_{n-k} \rangle_{\bigwedge^{n-k}Q} = \det(\langle v_i,w_j\rangle_Q).
\end{align*}
This definition asserts that the discriminant of a rank $n-k$ lattice is the quadratic value of the wedge product of any of its bases. Equation~\ref{eq:proofdyngrids3} is then obtained by replacing one of the wedges in $\langle w_{k+1}^i\wedge \ldots \wedge w_n^i,w_{k+1}^i\wedge \ldots \wedge w_n^i \rangle_{\bigwedge^{n-k}Q}$ via \eqref{eq:proofdyngrids2}.
}
\begin{equation}\label{eq:proofdyngrids3}
\langle g_{L_i} \eta_i e_j, w_{m}^i \rangle_Q \quad \text{with} \ j,m > k.
\end{equation}
To compute this determinant, write $\eta_i e_j = \sum_{\ell} a_{\ell j}^i e_\ell$ for all $j > k$ so that
\begin{align*}
g_{L_i} \eta_i e_j = \sum_{\ell} a_{\ell j}^i v_{\ell}^i.
\end{align*}
Using that $\{w^{i}_l\}$ are dual vectors to $\{v^{i}_l\}$ we compute
\[  \langle g_{L_i} \eta_i e_j, w_{m}^i \rangle_Q = \sum_{\ell} a_{\ell j}^i \langle v_{\ell}^i,w_{m}^i\rangle_Q =  a_{m j}^i \]
for all $m,j > k$. This implies that the determinant of the matrix with entries \eqref{eq:proofdyngrids3} is equal to the determinant of the lower right block of the matrix $\eta_i$. The latter being equal to one, we conclude that the discriminant of the lattice spanned by $w^{i}_{k+1}\wedge \ldots \wedge w^{i}_n$ is equal to $\alpha_i^{-1}$.

To summarize, we have established the following identity:
\[ \disc_Q(\Lambda_{L_i}^\# \cap L_i^\perp) = \alpha_i^{-1} \]
Since the left-hand side of this identity goes to infinity as $i \to \infty$ (because $\asymp \disc_Q(L_i)$) while the right-hand side is bounded, we have reached a contradiction. 
It follows that $\mathbf{M} = \SG$ and hence the proof of Theorem~\ref{thm:dyn-simplyconnected} is complete.
\section{The dynamical version of the theorem in codimension 2} \label{SEC: two_dim_thm}
In the following, we prove the analogue of Theorem~\ref{thm:dyn-simplyconnected} for the case $k =2 $ and $n-k \geq 3$ (i.e.~$n \geq 5$) ignoring the unipotent radical (cf.~Remark~\ref{rem:gridsIlya}); the case $n-k =2$, $k \geq 3$ is completely analogous and can be deduced by passing to the orthogonal complement.
Contrary to cases treated in \S\ref{sec:dimatleast4}, the groups whose dynamics we use are not semisimple and have non-trivial central torus (see also Remark~\ref{rem:congcond}).

Recall the following notation (for $k=2$):
\begin{itemize}
\item $\bSG = \Spin_Q \times \SL_2 \times \SL_{n-2}$ (here the ambient group).
\item $\bStDiag_L = \{ (h,\psi_{1,L}(h),\psi_{2,L}(h)): h \in \StSpin_L\}$ (here the acting group) for any $L \in \Gr{k}(\Q)$ where $\psi_{1,L}$ resp.~$\psi_{2,L}$ is roughly the restriction of the action of $h$ to $L$ resp.~$L^\perp$ (cf.~\S\ref{sec:diagemb}).
\item For any $L \in \Gr{2}(\Q)$ a choice of intermediate lattice $\Z^n \subset \Lambda_L \subset (\Z^n)^\#$ (also implicit in the definition of $\bStDiag_L $). For simplicity, we assume here in addition that $\Lambda_L \cap L = L(\Z)$ and $\Lambda_L^\# \cap L^\perp = L^\perp(\Z)$; such a choice will be constructed later (cf.~Proposition~\ref{prop:constructionLambda}).
Again, if $Q$ is unimodular, $\Lambda_L = \Z^n$ satisfies this property.
\end{itemize}


\begin{Thm}\label{thm:dyn dim2}
Let $L_i \in \Gr{2}(\Q)$ for $i \geq 1$ be an admissible sequence of rational subspaces and let $g_i \in \bSG(\R)$ be such that $g_i \bStDiag_{L_i}(\R) g_i^{-1} = \bStDiag_{L_0}(\R)$.
Let $\mu_i$ be the Haar probability measure on the closed orbit
\begin{align*}
g_i\bStDiag_{L_i}(\A) \bSG(\Q) \subset \rquot{\bSG(\A)}{\bSG(\Q)}.
\end{align*}
Then $\mu_i$ converges to the Haar probability measure on $\rquot{\bSG(\A)}{\bSG(\Q)}$ as $i \to \infty$.
\end{Thm}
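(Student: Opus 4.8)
The plan is to follow the blueprint of the proof of Theorem~\ref{thm:dyn-simplyconnected}, compensating for the fact that the acting group $\bStDiag_{L_i}$ is now only reductive --- its projection to the middle factor $\SL_2$ is the torus $\SO_{q_{L_i}}$, which is very far from maximal --- by invoking equidistribution of adelic torus orbits à la Duke. First I would reduce to the semisimple part. Write $\bStDiag_{L_i} = \bStDiag_{L_i}'\cdot Z_{L_i}$ as an almost direct product of $\Q$-groups, where $\bStDiag_{L_i}'$ is the commutator subgroup (so that $p_1(\bStDiag_{L_i}') =: \StSpin_{L_i}'\simeq\Spin_{Q|_{L_i^\perp}}$ is semisimple, and strongly isotropic at $p$ by admissibility together with Corollary~\ref{cor: strong-isotropicity stabilizer iff subspace}) and $Z_{L_i}\simeq\Spin_{q_{L_i}}$ is the central $\Q$-anisotropic torus. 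One disintegrates $\mu_i$ into the Haar measures on the $\bStDiag_{L_i}'(\A)^+$-orbits, which are fibered over a finite-volume quotient of $Z_{L_i}(\A)$ (finite volume since $Z_{L_i}$ is anisotropic over $\Q$). Passing to a weak${}^\ast$-convergent subsequence $\mu_i\to\mu$ and applying Theorem~\ref{Theorem: GorodnikOh} to these semisimple sub-orbits inside the ambient simply connected group $\Spin_Q\times\SL_2\times\SL_{n-2}$, one obtains a connected $\Q$-group $\mathbf{M}<\bSG$, elements $\delta_i\in\bSG(\Q)$ with $\delta_i^{-1}\bStDiag_{L_i}'\delta_i\subset\mathbf{M}$ for large $i$, and $g\in\bSG(\A)$, $h_i\in\bStDiag_{L_i}'(\A)^+$ with $g_ih_i\delta_i\to g$, such that $\mu$ agrees with the Haar measure on $g\mathbf{M}_0\bSG(\Q)$ on $W$-invariant functions. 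Exactly as in Theorem~\ref{thm:dyn-simplyconnected}, it suffices to prove $\mathbf{M}=\bSG$.

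\textbf{Projecting onto each factor.} For the third factor, $p_3(\bStDiag_{L_i}')$ is the special orthogonal group $\SO_{\tilde q_i}$ of the primitive integral form $\tilde q_i$ proportional to $q_{L_i^\perp\cap\Lambda_{L_i}^\#}$, which is maximal in $\SL_{n-2}$; so either $p_3(\mathbf{M})=\SL_{n-2}$ or $p_3(\mathbf{M})$ is a fixed conjugate of $\SO_{\tilde q_i}$, and in the latter case Proposition~\ref{Proposition: Stabilizer Uniquely Determines Quadratic Form} together with the convergence $g_ih_i\delta_i\to g$ (and boundedness of denominators of primitive forms) forces $\disc(\tilde q_i)\asymp\disc(\tilde q_{L_i^\perp})$ to be bounded, contradicting admissibility --- this is the argument for the second factor in the proof of Theorem~\ref{thm:dyn-simplyconnected}. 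For the first and second factors the projection of the commutator is non-maximal, so instead one uses the extra invariance of $\mu_i$ under $g_iZ_{L_i}(\A)g_i^{-1}$. The push-forward $(p_2)_*\mu_i$ is the adelic $\SO_{\tilde q_{L_i}}$-orbit in $\rquot{\SL_2(\A)}{\SL_2(\Q)}$ of conductor $\asymp\disc(\tilde q_{L_i})\to\infty$ (the latter following from $\disc_Q(L_i)\to\infty$ by Proposition~\ref{Proposition: discriminant goes to infinity implies determinant of primitive part of restrictions goes to infinity, case k not n-k}, as $k=2<n-k$), hence equidistributes by the adelic form of Duke's theorem \cite{duke88}; therefore $(p_2)_*\mu$ is the Haar measure and $p_2(\mathbf{M})=\SL_2$. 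Likewise $(p_1)_*\mu_i$ is the packet of the reductive group $\StSpin_{L_i}$ in $\rquot{\Spin_Q(\A)}{\Spin_Q(\Q)}$; its equidistribution is carried out in \S\ref{sec:firstfactor} by combining Theorem~\ref{Theorem: GorodnikOh} applied to $\bStDiag_{L_i}'$ with equidistribution of the central torus $\Spin_{q_{L_i}}$-sub-packets (a generalized Duke theorem), which is precisely what rules out the codimension-one confinement $p_1(\mathbf{M})=\Spin_{W'}$ left open by maximality of $\StSpin_{L_i}$ (Proposition~\ref{StabilizerMaximallyConnectedInSOQ}); the remaining possibilities $p_1(\mathbf{M})=\StSpin_{L_i}$ or $\StSpin_{L_i}'$ pin $L_i$ down up to orthogonal complement (Proposition~\ref{Proposition: Stabilizer Uniquely Determines Subspace}) and contradict $\disc_Q(L_i)\to\infty$ as in Theorem~\ref{thm:dyn-simplyconnected}. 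Hence $p_1(\mathbf{M})=\Spin_Q$.

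\textbf{Goursat and conclusion.} Now $\mathbf{M}$ surjects onto each of the three absolutely almost simple factors $\Spin_Q,\SL_2,\SL_{n-2}$ (here $n\geq 5$, so $\Spin_Q$ is absolutely simple). The $(1,3)$-projection of $\delta_i^{-1}\bStDiag_{L_i}'\delta_i$ is the graph of the isomorphism $\StSpin_{L_i}'\simeq\SO_{\tilde q_i}$; since the only borderline isomorphism of simple factors between $\Spin_Q$ and $\SL_{n-2}$ occurs at $n=6$ and would be incompatible with $\disc(\tilde q_i)\to\infty$, one gets $p_{13}(\mathbf{M})=\Spin_Q\times\SL_{n-2}$, and as $\SL_2$ has no common simple quotient with $\Spin_Q\times\SL_{n-2}$ it follows that $\mathbf{M}=\bSG$. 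Then $\bSG(\A)$ has no proper finite-index subgroups, so $\mu$ is the Haar measure on $\rquot{\bSG(\A)}{\bSG(\Q)}$; since the weak${}^\ast$-limit was arbitrary, $\mu_i$ converges to the Haar probability measure. I expect the genuine difficulty to lie in the first and second factors of the second step --- namely setting up the generalized Duke equidistribution of the toral packets of $\Spin_{q_{L_i}}$ in $\Spin_Q$ (and of $\SO_{\tilde q_{L_i}}$ in $\SL_2$), and making the ``ruling out of intermediate subgroups on average'' argument of \S\ref{sec:firstfactor} precise --- together with the care needed to run Theorem~\ref{Theorem: GorodnikOh} uniformly across the toral fibration produced by the disintegration in the first step.
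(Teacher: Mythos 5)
Your toolkit (Gorodnik--Oh for the semisimple part, Duke-type equidistribution for the toral directions, averaging over the centre, and the Ellenberg--Venkatesh description of intermediate groups) is the right one, and your treatment of the third factor matches the paper. But the architecture has a genuine gap: Theorem~\ref{Theorem: GorodnikOh} cannot be applied so as to produce a single pair $(\mathbf{M},g)$ describing the weak${}^\ast$ limit $\mu$ of the full orbit measures $\mu_i$. Since $\bStDiag_{L_i}$ is reductive, its orbit fibres over the compact quotient $\rquot{\mathbf{T}_{L_i}(\A)}{\mathbf{T}_{L_i}(\Q)}$ of the central torus, the fibres being translates of orbits of the commutator subgroup, whose projection to the $\SL_2$-factor is \emph{trivial}. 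Gorodnik--Oh applies only fibrewise, i.e.\ to each toral translate separately, and for some translates equidistribution of the fibre genuinely fails: the intermediate groups are pointwise stabilisers of rational lines, and the obstruction is a short vector in the translated subspace (case (2) of Proposition~\ref{prop:pointwisestab}); this cannot be excluded by $\disc_Q(L_i)\to\infty$ alone. Consequently the limit of the averaged measures is a priori only an average of homogeneous measures with varying data $(\mathbf{M}_t,g_t)$, so your claim $p_2(\mathbf{M})=\SL_2$ (which you argue via Duke for the pushforward of the \emph{average}, a different object) and the final Goursat step have no single $\mathbf{M}$ to apply to. Moreover, without homogeneity of $\mu$, individual equidistribution of the three pushforwards does not by itself give joint equidistribution; the missing ingredient is exactly the paper's reduction step: the limit is invariant under a strongly isotropic $p$-adic group (Lemma~\ref{lem:invariance of the limit}), and choosing unipotent elements there acting trivially on $L$ resp.\ $L^\perp$, Mautner plus disjointness of ergodic systems from trivial ones (Lemma~\ref{lem:disjointnessfromtrivial}, Lemma~\ref{lem:2dim indiv->equi}) force the joining to be the product. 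The torus average, controlled via Corollary~\ref{cor:non-div from equidistr} and Lemma~\ref{lem:ugly lemma} so that the bad translates have measure $O(\varepsilon^\kappa)$, is then used only to prove equidistribution in the first factor; it cannot be folded into one global application of Gorodnik--Oh as you propose.

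A second, smaller gap is in the second factor: the pushforward of $\mu_i$ to $\rquot{\SL_2(\A)}{\SL_2(\Q)}$ is the orbit of $\psi_{1,L_i}(\StSpin_{L_i}(\A))$, which is not the full adelic packet of $\SO_{q_{L_i}}$ but (essentially) only its subgroup of squares; this is the content of Proposition~\ref{prop:squares}. Plain Duke for full packets therefore does not apply directly: one needs the volume lower bound $\disc_Q(L_i)^{1/2+o(1)}$ together with equidistribution for subcollections of packets (Harcos--Michel), or Linnik's method exploiting the fixed split prime $p$ provided by admissibility, which is how the paper proceeds.
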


We will structure the proof somewhat differently as equidistribution in the first component turns out to be the most difficult challenge in the proof.
We fix an admissible sequence of subspaces $L_i$ and a prime $p$ as in the definition of admissibility.

Recall (cf.~\S\ref{sec:stabilizersintro}) that for any $L \in \Gr{2}(\Q)$ the group $\StSpin_L$ is not semisimple but only reductive.
Let us describe the center as well as the commutator subgroup of~$\StSpin_L$.
Define the pointwise stabilizer subgroup
\begin{align*}
\StSpinpt_{L} = \{g \in \Spin_Q: g.v = v \text{ for all }v \in L\}.
\end{align*}
The center of $\StSpin_L$ is equal to $\StSpinpt_{L^\perp}$ which we denote by $\mathbf{T}_L$ for simplicity as it is abelian in this case. The commutator subgroup of $\StSpin_L$ is the semisimple group $\StSpinpt_L$ and $\StSpin_L$ is isogenous to $\StSpinpt_L \times \mathbf{T}_L$ (see Remark~\ref{rem:isogenytostabprod}).
As in \S\ref{sec:dimatleast4}, one can use the measure rigidity result of Gorodnik and Oh \cite{gorodnikoh}, this time for subgroups of the form $\StSpinpt_L$.
These are however non-maximal so that we need to put extra effort to rule out intermediate groups\footnote{Roughly speaking, the obstacle to overcome are 'short vectors' in $L$. Ellenberg and Venkatesh \cite{localglobalEV} prove the theorem we are alluding to here assuming that $L$ does not contain 'short vectors' -- see also Proposition~\ref{prop:pointwisestab}.}.
Here, we use an averaging procedure involving the torus $\mathbf{T}_L$ as well as Duke's theorem \cite{duke88} to show that these obstructions typically do not occur.

Let us outline the structure of the proof:
\begin{itemize}
\item In \S\ref{sec:reductiontoindivid}, we show (in Lemma~\ref{lem:2dim indiv->equi}) that it is sufficient to prove equidistribution in each of the factors of $\bSG$, that is, to show equidistribution of the projections of the packets in Theorem~\ref{thm:dyn dim2} to
\begin{align}\label{eq:listoffactors}
\rquot{\Spin_Q(\A)}{\Spin_Q(\Q)},\ \rquot{\SL_2(\A)}{\SL_2(\Q)},\ \rquot{\SL_{n-2}(\A)}{\SL_{n-2}(\Q)}.
\end{align}
As mentioned in Remark~\ref{rem:disjointness}, we use the elementary fact that ergodic systems are disjoint from trivial systems for this reduction (see Lemma~\ref{lem:disjointnessfromtrivial}).
\item To prove equidistribution in each of the factors of $\bSG$, we first note that equidistribution in the third factor can be verified as in \S\ref{sec:dimatleast4}, Proposition~\ref{prop:projontofactors of Levi}. Equidistribution in the second factor turns out to be a variant of Duke's theorem~\cite{duke88} which we discuss in \S\ref{sec:2dim 2nd}. 
\item Due to the difficulties described above, equidistribution in the first factor of $\bSG$, is the hardest to prove (cf.~\S\ref{sec:firstfactor}) and implies Theorem~\ref{thm:dyn dim2} by the first two items in this list.
In \S\ref{sec:cor2ndfactor}, we collect a useful corollary of the above variant of Duke's theorem which we then use in Lemma~\ref{lem:ugly lemma} to prove that the subspaces in the packet do not contain short vectors on average.
\end{itemize} 

\subsection{Reduction to individual equidistribution}\label{sec:reductiontoindivid}
As explained, we begin by reducing Theorem~\ref{thm:dyn dim2} to the corresponding equidistribution statement in each of the factors of $\bSG$.
To this end, we will use the following elementary fact from abstract ergodic theory.

\begin{Lem}\label{lem:disjointnessfromtrivial}
Let $\mathsf{X}_1 = (X,\mathcal{B}_1,\mu_1,T_1)$ and $ \mathsf{X}_2 = (X_2,\mathcal{B}_2,\mu_2,T_2)$ be measure-preserving systems.
Suppose that $\mathsf{X}_1$ is ergodic and that $\mathsf{X}_2$ is trivial (i.e.~$T_2(x) = x$ for $\mu_2$-almost every $x \in X_2$).
Then the only joining of $\mathsf{X}_1$ and $\mathsf{X}_2$ is $\mu_1 \times \mu_2$.
\end{Lem}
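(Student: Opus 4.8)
The plan is the classical disintegration argument. Recall that a joining of $\mathsf{X}_1$ and $\mathsf{X}_2$ is a $(T_1\times T_2)$-invariant probability measure $\lambda$ on $X_1\times X_2$ whose push-forwards under the two coordinate projections $\pi_1,\pi_2$ are $\mu_1$ and $\mu_2$; we must show $\lambda=\mu_1\otimes\mu_2$. First I would note that, since $T_2x=x$ for $\mu_2$-a.e.\ $x$ and $(\pi_2)_*\lambda=\mu_2$, the transformation $T_1\times T_2$ agrees with $T_1\times\mathrm{id}$ on a $\lambda$-conull set; hence $\lambda$ is $(T_1\times\mathrm{id})$-invariant.

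Next I would disintegrate $\lambda$ over the second coordinate, writing $\lambda=\int_{X_2}(\lambda_{x_2}\otimes\delta_{x_2})\,\de\mu_2(x_2)$ for a $\mu_2$-a.e.\ uniquely determined measurable family of probability measures $\lambda_{x_2}$ on $X_1$. Since $T_1\times\mathrm{id}$ preserves the fibres of $\pi_2$, applying it to this disintegration and invoking essential uniqueness of disintegrations yields $(T_1)_*\lambda_{x_2}=\lambda_{x_2}$ for $\mu_2$-a.e.\ $x_2$; that is, each $\lambda_{x_2}$ is a $T_1$-invariant probability measure. Moreover, the marginal condition $(\pi_1)_*\lambda=\mu_1$ reads $\mu_1=\int_{X_2}\lambda_{x_2}\,\de\mu_2(x_2)$.

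The heart of the matter is then the following elementary fact: if the ergodic measure $\mu_1$ is an average of $T_1$-invariant probability measures, those measures coincide with $\mu_1$ almost surely. I would deduce this from Birkhoff's pointwise ergodic theorem. Fix $f$ ranging over a countable subset $\{f_k\}$ of $C_c(X_1)$ dense in the uniform norm (so that it separates Radon probability measures). By the ergodic theorem and ergodicity of $\mu_1$, the set $E_f=\{x:\frac1N\sum_{j<N}f(T_1^jx)\to\int f\,\de\mu_1\}$ is $T_1$-invariant and $\mu_1$-conull; hence $\lambda_{x_2}(E_f)=1$ for $\mu_2$-a.e.\ $x_2$. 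Applying Birkhoff's theorem to the invariant measure $\lambda_{x_2}$, the same averages converge $\lambda_{x_2}$-a.e.\ to the conditional expectation of $f$ onto the $T_1$-invariant $\sigma$-algebra; comparing with the limit valid on $E_f$ forces $\int f\,\de\lambda_{x_2}=\int f\,\de\mu_1$ for $\mu_2$-a.e.\ $x_2$. Intersecting the corresponding conull sets over $k$ gives $\lambda_{x_2}=\mu_1$ for $\mu_2$-a.e.\ $x_2$, whence $\lambda=\int_{X_2}(\mu_1\otimes\delta_{x_2})\,\de\mu_2=\mu_1\otimes\mu_2$.

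Finally I would remark that the argument only uses that $X_1,X_2$ are standard Borel (Radon) probability spaces, and that the identical scheme — with the ergodic decomposition theorem replacing the single appeal to Birkhoff — handles measure-preserving actions of an arbitrary locally compact second countable group, which is the form in which the lemma is applied in Section~\ref{SEC: two_dim_thm}. I do not expect any genuine obstacle here: the only points requiring care are the essential uniqueness of the disintegration (used to propagate $T_1$-invariance to the fibre measures $\lambda_{x_2}$) and the passage from ``for each $f$, almost every $x_2$'' to ``for almost every $x_2$, every $f$'', which the countable dense family $\{f_k\}$ resolves.
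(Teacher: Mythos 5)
Your argument is correct, but it is genuinely different from the one in the paper. You disintegrate the joining $\lambda$ over the second (trivial) factor, upgrade $(T_1\times T_2)$-invariance to $(T_1\times\mathrm{id})$-invariance, deduce from essential uniqueness of the disintegration that almost every fibre measure $\lambda_{x_2}$ is $T_1$-invariant, and then use Birkhoff's theorem twice (once for $\mu_1$, once for $\lambda_{x_2}$) to conclude $\lambda_{x_2}=\mu_1$ almost surely; this is in effect the statement that an ergodic measure is an extreme point of the invariant measures. The paper's proof avoids disintegration altogether: it applies Birkhoff's theorem directly to $1_{A_1}$ under the joining, notes that on a $\lambda$-conull product set the $T_2$-coordinate is genuinely fixed, and passes to the limit of the averaged integrals of $1_{A_1}(T_1^m x_1)1_{A_2}(x_2)$ by dominated convergence, obtaining $\lambda(A_1\times A_2)=\mu_1(A_1)\mu_2(A_2)$ for all rectangles. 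The trade-off is exactly the one you flag at the end: your route needs the spaces to be standard Borel (for the disintegration to exist and for a countable family in $C_c(X_1)$ to separate measures), whereas the lemma is stated, and the paper's proof works, for arbitrary abstract measure-preserving systems; in exchange your argument identifies the conditional measures explicitly and extends more readily to actions of general groups via the ergodic decomposition. Since the lemma is only applied in Section~\ref{SEC: two_dim_thm} to homogeneous spaces (which are standard Borel) and to a single ergodic element produced by Mautner's phenomenon, the extra hypothesis is harmless there, but if you want to prove the lemma exactly as stated you should either add the standardness assumption or switch to the direct averaging argument.
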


\begin{proof}
Let $\nu$ be a joining and let $A_1 \times A_2 \subset X_1 \times X_2$ be measurable. It suffices to show that $\nu(A_1 \times A_2) = \mu_1(A_1)\mu_2(A_2)$.
By $T_1 \times T_2$-invariance of $\nu$, we have
\begin{align*}
\nu(A_1 \times A_2)
&= \int_{X_1 \times X_2} 1_{A_1}(x_1) 1_{A_2}(x_2) \de \nu(x_1,x_2) \\
&= \frac{1}{M} \sum_{m=0}^{M-1} \int_{X_1 \times X_2} 1_{A_1}(T_1^{m}x_1) 1_{A_2}(T_2^{m}x_2) \de \nu(x_1,x_2)
\end{align*}
As $\mathsf{X}_1$ is ergodic, there is a $\mu_1$-conull set $B_1 \subset X_1$ with
\begin{align*}
\frac{1}{M} \sum_{m=0}^{M-1} 1_{A_1}(T_1^m(x)) \to \mu_1(A_1)
\end{align*}
for every $x \in B_1$ by Birkhoff's ergodic theorem.
As $\mathsf{X}_2$ is trivial, there is a $\mu_2$-conull set $B_2$ with $T_2(x) = x$ for all $x \in B_2$. We let $B = B_1 \times B_2$ and note that $B$ has full measure as it is the intersection of the full-measure sets $B_1 \times X_2$ and $X_1 \times B_2$ (we use here that $\nu$ is a joining).
Therefore,
\begin{align*}
\nu(A_1 \times A_2) 
&=\frac{1}{M} \sum_{m=0}^{M-1} \int_{B} 1_{A_1}(T_1^{m}x_1) 1_{A_2}(T_2^{m}x_2) \de \nu(x_1,x_2)\\
&= \frac{1}{M} \sum_{m=0}^{M-1} \int_{B} 1_{A_1}(T_1^{m}x_1) 1_{A_2}(x_2) \de \nu(x_1,x_2) \\
&=  \int_{B} \frac{1}{M} \sum_{m=0}^{M-1} 1_{A_1}(T_1^{m}x_1) 1_{A_2}(x_2) \de \nu(x_1,x_2)\\
& \to \int_{B} \mu_1(A_1)  1_{A_2}(x_2) \de \nu(x_1,x_2) = \mu_1(A_1) \mu_2(A_2)
\end{align*}
as claimed.
\end{proof}

We aim to apply Lemma~\ref{lem:disjointnessfromtrivial} to any weak${}^\ast$-limit $\mu$ of the measures in Theorem~\ref{thm:dyn dim2}.
Thus, we need to establish some invariance of the latter.
Let $p$ be as in the definition of admissibility.

\begin{Lem}\label{lem:invariance of the limit}
There exists $g \in \GL_n(\Q_p)$ with the following property: Let $L \in \Gr{2}(\Q_p)$ be the subspace spanned by the first two columns of $g$.
Then $\mu$ is invariant under the subgroup of $\bStDiag_L(\Q_p) \subset \bSG(\Q_p)$ where
\begin{align*}
\bStDiag_L = \{ (h, \pi_1(g^{-1}\isog(h)g), \pi_2(g^{-1}\isog(h^{-1})g)^t) : h \in \StSpin_{L} \}.
\end{align*}
Moreover, the $\Q_p$-group $\bStDiag_L$ is strongly isotropic.
\end{Lem}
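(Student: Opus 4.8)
The goal is to produce invariance of any weak${}^\ast$-limit $\mu$ under a $p$-adic subgroup of a group of the form $\bStDiag_L$ for some non-degenerate $L \in \Gr{2}(\Q_p)$, together with strong isotropy of that group. The natural strategy is a compactness argument in the $p$-adic component combined with the known $\StSpin_{L_i}(\Q_p)$-invariance of each $\mu_i$. First I would recall that, by the hypothesis $g_i \bStDiag_{L_i}(\R) g_i^{-1} = \bStDiag_{L_0}(\R)$, the $\R$-component of the acting group is pinned down; the only freedom is at $p$ (and the other finite places, which we can ignore by working $W$-invariantly as in \S\ref{sec:dimatleast4}). Writing $g_{L_i} \in \GL_n(\Q_p)$ for the matrices from \S\ref{sec:notationforsubsp}, each $\mu_i$ is invariant under $g_{i}\bStDiag_{L_i}(\Q_p)g_i^{-1}$, and because the $L_i$ form an admissible sequence the local subspaces $g_{i,p}^{-1}.L_0 \in \Gr{2}(\Q_p)$ are non-degenerate with bounded denominators (the discriminant condition is global, but locally at $p$ nothing degenerates — use Lemma~\ref{lem:loc continuity disc} to control $\ord_p$).

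The second step is to extract a convergent subsequence. The space $\Gr{2}(\Q_p)$ is compact, so after passing to a subsequence the local subspaces converge to some non-degenerate $L \in \Gr{2}(\Q_p)$; likewise, after adjusting by elements of $\bSG(\Q_p)$ of bounded size, one can arrange that the acting $p$-adic groups $g_i\bStDiag_{L_i}(\Q_p)g_i^{-1}$ converge (in the appropriate sense — e.g. uniformly on compact subsets of a fixed compact neighbourhood of the identity, or on the level of a fixed open compact subgroup) to a group of the stated shape $\bStDiag_L(\Q_p)$ with the explicit formula involving some $g \in \GL_n(\Q_p)$. Here one uses that $\bStDiag_{L_i}$ is defined functorially from $L_i$ via $\StSpin_{L_i}$ and the isogenies $\psi_{1,L_i},\psi_{2,L_i}$, so continuity of these constructions in $L_i$ (and in the auxiliary matrices $g_{L_i}$, which have bounded denominators) gives the limit group the correct form. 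Passing to the limit in the invariance relation $\int f\,d\mu_i = \int f(h\,\cdot)\,d\mu_i$ for $h$ in a fixed compact piece of the limiting group, using weak${}^\ast$-convergence of $\mu_i \to \mu$ and uniform convergence of the $h$'s, yields invariance of $\mu$ under (at least) a finite-index or open subgroup of $\bStDiag_L(\Q_p)$ — which is all that is claimed.

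For the final assertion, strong isotropy of $\bStDiag_L$ as a $\Q_p$-group: since $\bStDiag_L$ is isogenous to $\StSpin_L$ (it is its image under an isogeny onto $\bSG$, cf.~\S\ref{sec:stabilizers}), strong isotropy is equivalent to strong isotropy of $\StSpin_L$ over $\Q_p$. By Corollary~\ref{cor: strong-isotropicity stabilizer iff subspace} this is equivalent to both $(L,Q|_L)$ and $(L^\perp,Q|_{L^\perp})$ being isotropic over $\Q_p$. The codimension part $(L^\perp, Q|_{L^\perp})$ is isotropic automatically because $n-2 \geq 3$ forces $Q|_{L^\perp}$ over $\Q_p$ to represent zero (a $\Q_p$-quadratic form in $\geq 5$ variables is isotropic, and even $\geq 3$ variables with the right behaviour; more carefully, one transfers the admissibility hypothesis $\StSpin_{L_i}(\Q_p)$ strongly isotropic through the limit). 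For the $2$-dimensional part $(L,Q|_L)$, one uses that each $\StSpin_{L_i}(\Q_p)$ is strongly isotropic by admissibility, hence each $(L_i,Q|_{L_i})$ is isotropic over $\Q_p$, i.e. $Q|_{L_i\cap\Z_p^n}$ represents zero; isotropy is an open and closed condition on non-degenerate binary $\Q_p$-forms (it is detected by $-\disc \in (\Q_p^\times)^2$, which is locally constant), so it passes to the limit $L$.

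\textbf{Main obstacle.} The delicate point is the second step: making precise in what topology the acting $p$-adic subgroups converge and ensuring the limit is genuinely of the form $\bStDiag_L(\Q_p)$ rather than some degenerate or lower-dimensional object. This requires (i) controlling the denominators of $g_{L_i}$ uniformly (they divide $\disc(Q)$, which is fixed — good), (ii) showing the local subspaces $g_{i,p}^{-1}.L_0$ stay in a compact family of \emph{non-degenerate} subspaces, which is where Lemma~\ref{lem:loc continuity disc} and admissibility (rather than just $\disc_Q(L_i)\to\infty$) are essential, and (iii) checking that the isogeny-type descriptions $\psi_{1,L},\psi_{2,L}$ and the diagonal embedding behave continuously under this limit so that the explicit formula $\bStDiag_L = \{(h,\pi_1(g^{-1}\pi_Q(h)g),\pi_2(g^{-1}\pi_Q(h^{-1})g)^t): h\in\StSpin_L\}$ holds with a genuine $g\in\GL_n(\Q_p)$. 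Everything else is a routine passage to the limit plus the already-established isotropy dictionary.
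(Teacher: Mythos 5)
Your overall strategy is the same as the paper's: use the fact that the matrices $g_{L_i}$, viewed in $\GL_n(\Q_p)$, lie in the compact set of bases of lattices squeezed between $\Z_p^n$ and $(\Z_p^n)^\#$, extract a convergent subsequence $g_{L_i}\to g$, let $L$ be spanned by the first two columns of $g$, pass the $\bStDiag_{L_i}(\Q_p)$-invariance of $\mu_i$ to the limit, and then read off strong isotropy from Corollary~\ref{cor: strong-isotropicity stabilizer iff subspace}. However, two steps that carry real content are not actually supplied. First, non-degeneracy of the limit $L$: compactness of $\Gr{2}(\Q_p)$ alone gives you a limit plane but nothing prevents $Q|_L$ from degenerating, and Lemma~\ref{lem:loc continuity disc} is not the relevant tool (it controls the discriminant under translation by an element of $\SO_Q(\Q_p)$; no such element is being applied here). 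The paper's argument is different: since $g_{L_i}\to g$, one obtains $\Z_p$-bases of $L_i(\Z_p)$ converging to a basis of $L(\Z_p)$, hence $\disc_{p,Q}(L_i)\to\disc_{p,Q}(L)$, and discreteness of the square classes forces $\disc_{p,Q}(L)=\disc_{p,Q}(L_i)$ for large $i$, in particular nonzero. Some version of this must be run; flagging it as the "main obstacle" and pointing at admissibility does not close it, and your later "locally constant" argument for isotropy itself presupposes this non-degeneracy and the convergence of discriminants.

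Second, isotropy of $(L^\perp,Q|_{L^\perp})$: your stated reason, that a $\Q_p$-form in $n-2\geq 3$ variables is isotropic "with the right behaviour", is false as a justification — non-degenerate ternary and quaternary forms over $\Q_p$ can be anisotropic, and "transferring the admissibility hypothesis through the limit" is precisely the step requiring an argument. The paper does it by picking primitive isotropic vectors $v_i\in L_i^\perp(\Z_p)$ (available by admissibility via Corollary~\ref{cor: strong-isotropicity stabilizer iff subspace}) and using compactness of $\Z_p^n\setminus p\Z_p^n$ to extract a limit isotropic vector in $L^\perp(\Z_p)$; alternatively, the locally-constant-invariants argument you use for the binary part extends (dimension, discriminant and Hasse invariant are locally constant on non-degenerate forms), but one of these must be written down. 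A smaller point: you settle for invariance under an open or finite-index subgroup of $\bStDiag_L(\Q_p)$, whereas the lemma claims invariance under the whole group; the same limit argument gives this for free (for fixed $h$ in the limit group choose $h_i\in\bStDiag_{L_i}(\Q_p)$ with $h_i\to h$ and use weak${}^\ast$ convergence together with uniform continuity of the test function), and full invariance — or at least invariance under all unipotents of $\StSpin_L(\Q_p)$ — is what is used later via Mautner's phenomenon.
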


\begin{proof}
First of all, we prove that there exists a compact subset $K \subset \GL_n(\Q_p)$ such that $g_{L_i} \in K$ for all $i \in \N$. 
Recall that $g_{L_i}$ consists of a basis of an intermediate lattice $\Z^n \subseteq \Lambda_{L_i} \subseteq (\Z^n)^\#$ (cf.~\S\ref{sec:diagemb}).
The set $K$ of elements $g \in \GL_n(\Q_p)$ with $\Z_p^n \subset g\Z_p^n \subset (\Z_p^n)^\#$ is compact (in fact, it consists of finitely many cosets modulo $\GL_n(\Z_p)$ on the right).


By compactness of $K$ we may assume (by passing to a subsequence) that the sequence $(g_{L_i})_{i \in \N}$ converges to some $g \in K$. Let $L$ denote the $\Q_p$-plane spanned by the first two columns of $g$. Note that $\mu$ is $\bStDiag_L(\Q_p)$-invariant because each $\mu_i$ is $\bStDiag_{L_i}(\Qp)$-invariant. Therefore, we are left to show that $L$ is non-degenerate and $\bStDiag_L(\Q_p)$ is strongly isotropic.

We first observe that $L$ and $L^{\perp}$ are non-degenerate. Indeed, since $g_{L_i} \rightarrow g$, there exist $\Z_p$-bases of the subspaces $L_i$ which converge towards a basis of $L$.
Taking discriminants of $L_i$ and $L$ with respect to these bases, we obtain
\begin{align*}
\disc_{p,Q}(L_i) \rightarrow \disc_{p,Q}(L).
\end{align*}
Since $\Z_p^{\times}/(\Z_p^{\times})^2$ is discrete, $\disc_{p,Q}(L_i)$ is eventually constant and therefore $\disc_{p,Q}(L) = \disc_{p,Q}(L_i)$ for $i$ large enough; non-degeneracy of $L$ follows. In particular, $L^\perp$ is non-degenerate.
%

We may now use Corollary \ref{cor: strong-isotropicity stabilizer iff subspace} to show $\bStDiag_L$ or equivalently $\StSpin_{L}$ is strongly isotropic. 
Since $\StSpin_{L_i}$ is strongly isotropic at $p$, the quadratic spaces $(Q|_{L_i}, L_i)$ and $(Q|_{L_i^{\perp}}, L_i^{\perp})$ are isotropic over $\Q_p$.
By isotropy of the spaces $(Q|_{L_i}, L_i)$, we have a sequence of non-zero primitive vectors $v_i \in L_i(\Z_p)$ such that $Q(v_i)=0$ (after multiplying with denominators). By compactness of $\Z_p^n \setminus p\Z_p^n$,
the sequence $v_i$ admits a limit $v\in \Z_p^n \setminus p\Z_p^n$ after passing to a subsequence. This limit clearly satisfies $v \in L(\Z_p)$ and $Q(v) =0$, so $(Q|_L, L)$ is isotropic.
An identical argument proves that $(Q|_{L^{\perp}}, L^{\perp})$ is also isotropic, which proves (cf.~Corollary \ref{cor: strong-isotropicity stabilizer iff subspace}) that $\StSpin_{L}$ is a strongly isotropic group. The proof is complete.
\end{proof}

Recall that $\psi_{1,L},\psi_{2,L}$ denote the epimorphisms $\StSpin_{L_i} \to \SO_{q_{L_i}}$, $\StSpin_{L_i} \to \SO_{q_{L_i^\perp}}$ respectively.

\begin{Lem}\label{lem:2dim indiv->equi}
Suppose that individual equidistribution holds i.e.~that
\begin{enumerate}
\item $g_{i,1}\StSpin_{L_i}(\A) \Spin_Q(\Q)$ is equidistributed in $\rquot{\Spin_Q(\A)}{\Spin_Q(\Q)}$,
\item $g_{i,2}\psi_{1,L}(\StSpin_{L_i}(\A)) \SL_2(\Q)$ is equidistributed in $\rquot{\SL_2(\A)}{\SL_2(\Q)}$, and
\item $g_{i,3}\psi_{2,L}(\StSpin_{L_i}(\A)) \SL_{n-2}(\Q)$ is equidistributed in $\rquot{\SL_{n-2}(\A)}{\SL_{n-2}(\Q)}$.
\end{enumerate}
Then Theorem~\ref{thm:dyn dim2} holds.
\end{Lem}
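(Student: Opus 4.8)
The plan is to view any weak${}^\ast$ limit $\mu$ of the sequence $(\mu_i)$ as a joining of the three coordinate systems
\[
X_1 = \rquot{\Spin_Q(\A)}{\Spin_Q(\Q)},\qquad X_2 = \rquot{\SL_2(\A)}{\SL_2(\Q)},\qquad X_3 = \rquot{\SL_{n-2}(\A)}{\SL_{n-2}(\Q)},
\]
and to strip off the factors one at a time via Lemma~\ref{lem:disjointnessfromtrivial}. After passing to a subsequence we may assume $\mu_i \to \mu$; since $X_1$ is compact and the three assumed individual equidistributions force the coordinate marginals of $\mu_i$ to converge to the respective Haar probability measures $m_1,m_2,m_3$, the limit $\mu$ is a probability measure with these marginals. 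It therefore suffices to prove $\mu = m_1\otimes m_2 \otimes m_3$ for every such $\mu$. By Lemma~\ref{lem:invariance of the limit} we may fix a non-degenerate $\Qp$-plane $L$ with $\bStDiag_L$ strongly isotropic over $\Qp$ and such that $\mu$ is $\bStDiag_L(\Qp)$-invariant.

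Next I would single out two subgroups of $\bStDiag_L(\Qp)$. Recall that $\StSpin_L$ is reductive with centre the one-dimensional torus $\mathbf{T}_L = \StSpinpt_{L^\perp}$ and commutator subgroup the semisimple group $\StSpinpt_L$ (Remark~\ref{rem:isogenytostabprod}); accordingly $\bStDiag_L(\Qp)$ contains the image $T$ of $\mathbf{T}_L(\Qp)$ and the image $N$ of $\StSpinpt_L(\Qp)$. Since $\psi_{2,L}$ restricts the action to $L^\perp$ and $\mathbf{T}_L$ fixes $L^\perp$ pointwise, $T$ acts trivially on the $X_3$-coordinate; since $\psi_{1,L}$ restricts the action to $L$ and $\StSpinpt_L$ fixes $L$ pointwise, $N$ acts trivially on the $X_2$-coordinate. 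Strong isotropy of $\bStDiag_L$ forces the normal subgroup $\mathbf{T}_L(\Qp)$ to be non-compact, hence $\Qp$-split, and forces the semisimple group underlying $N$ to have no $\Qp$-anisotropic factors; together with the fact that $Q|_{L^\perp}$ is isotropic over $\Qp$ (Corollary~\ref{cor: strong-isotropicity stabilizer iff subspace}) this makes the image of $N$ in $\SL_{n-2}$ a non-compact subgroup. Passing from the adelic to the $S$-arithmetic picture by strong approximation (valid since $\Spin_Q(\Qp)$ and $\SL_{n-2}(\Qp)$ are non-compact), the Howe--Moore mixing theorem shows that every unbounded subgroup of $\Spin_Q(\Qp)$ acts mixingly on $(X_1,m_1)$, and likewise $\SL_{n-2}(\Qp)$ on $(X_3,m_3)$; as mixing passes to products, $N(\Qp)$ acts mixingly on $(X_1\times X_3, m_1\otimes m_3)$. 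In particular we may fix a single $a \in T(\Qp)$ that is ergodic for $m_1$ and a single $b\in N(\Qp)$ that is ergodic for $m_1\otimes m_3$.

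The conclusion then follows from two applications of Lemma~\ref{lem:disjointnessfromtrivial}. First, the projection of $\mu$ to $X_1\times X_3$ is a joining of the ergodic system $(X_1,m_1,a)$ with the trivial system $(X_3,m_3,a)$ — here $a$ acts trivially on $X_3$ — so it equals $m_1\otimes m_3$. Second, $\mu$ itself is a joining of the ergodic system $(X_1\times X_3, m_1\otimes m_3, b)$ with the trivial system $(X_2,m_2,b)$ — here $b$ acts trivially on $X_2$ — so $\mu = (m_1\otimes m_3)\otimes m_2 = m_1\otimes m_2\otimes m_3$, which is Theorem~\ref{thm:dyn dim2}.

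I expect the substantive part to be the middle paragraph: one must carefully track which subgroup of $\bStDiag_L(\Qp)$ acts trivially on which coordinate (note that nothing non-trivial in $\bStDiag_L(\Qp)$ is trivial on the $X_1$-coordinate, which forces $X_3$ to be peeled off first with the torus and then $X_2$ with the semisimple part), and — more importantly — to upgrade the strong isotropy provided by Lemma~\ref{lem:invariance of the limit} into the required ergodicity and mixing statements on $X_1$ and $X_3$ through strong approximation together with the Howe--Moore theorem. Once these are in place, the disjointness step is purely formal.
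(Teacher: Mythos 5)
Your argument is correct and is in substance the paper's own proof: a weak${}^\ast$ limit is a joining with Haar marginals by the three assumed equidistributions, it inherits $\bStDiag_L(\Q_p)$-invariance from Lemma~\ref{lem:invariance of the limit}, and two applications of Lemma~\ref{lem:disjointnessfromtrivial} — one with an element of the central torus $\mathbf{T}_L$ (trivial on the $\SL_{n-2}$-coordinate) and one with an element of the pointwise stabilizer $\StSpinpt_L$ (trivial on the $\SL_2$-coordinate) — finish the proof. The only differences are cosmetic: you peel off $X_3$ first and $X_2$ second, while the paper does the reverse, so your parenthetical claim that this order is \emph{forced} is not accurate (only the fact that $X_1$ can never be on the trivial side is forced); and you use Howe--Moore mixing in both steps, whereas the paper uses a unipotent together with Mautner's phenomenon in its first step. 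One point to state more carefully: in the $p$-adic setting an unbounded (e.g.\ unipotent) subgroup need not contain a single element with unbounded powers — the cyclic group generated by a unipotent of $\Spin_Q(\Q_p)$ is bounded — so "the subgroup acts mixingly, hence we may fix a single ergodic element'' requires choosing $a$ and $b$ non-elliptic; this is immediate here, since $\mathbf{T}_L(\Q_p)$ is split (as you note) and $\StSpinpt_L$ is isotropic at $p$, hence contains a $\Q_p$-split torus in which $b$ can be taken, after which Howe--Moore (with no invariant vectors coming from strong approximation) applies exactly as you describe.
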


\begin{proof}
Let $\mu$ be a weak${}^\ast$-limit and choose $L$ as in Lemma~\ref{lem:invariance of the limit}. 
By assumption, $\mu$ is a joining with respect to the Haar measures on each factor.
We proceed in two steps and apply Lemma~\ref{lem:disjointnessfromtrivial} once in each step.

For the first step, we choose $h \in \StSpin_{L}(\Q_p)$ which acts trivially on $L$ but non-trivially on $L^\perp$. As $\StSpin_{L}(\Q_p)$ is strongly isotropic we can choose $h$ so that it is unipotent and not contained in any normal subgroup of $\Spin_Q(\Q_p)$. Since $\Spin_Q$ is simply connected and $\Spin_Q(\Q_p)$ is isotropic, $\Spin_Q$ has strong approximation with respect to $\{p\}$ (see for example \cite[Thm.~7.12]{platonov}).
In particular, $\Spin_Q(\Q_p)$ acts ergodically on $X_1 = \rquot{\Spin_Q(\A)}{\Spin_Q(\Q)}$ with respect to the Haar measure on $X_1$.
By Mautner's phenomenon (see \cite[\S2]{MT2} for this instance), $h$ also acts ergodically.
Embedding $h$ diagonally (using the embedding in Lemma \ref{lem:invariance of the limit}), we can apply Lemma~\ref{lem:disjointnessfromtrivial} for $X_1$ as above and $X_2 = \rquot{\SL_2(\A)}{\SL_2(\Q)}$ and obtain that the pushforward of $\mu$ to $X_1 \times X_2$ is the Haar measure.

For the second step, we proceed similarly. Choose $h \in \StSpin_{L}(\Q_p)$ which acts trivially on $L^\perp$ but non-trivially on $L$. One checks that $h$ acts ergodically on $X_1 \times X_2$ (via $\pi_2(g^{-1}\isog(h^{-1})g)^t$ on the second factor cf. Lemma \ref{lem:invariance of the limit}). Applying Lemma~\ref{lem:disjointnessfromtrivial} again for $X_1 \times X_2$ and for $X_3 = \rquot{\SL_{n-2}(\A)}{\SL_{n-2}(\Q)}$ we obtain the claim.
\end{proof}

We prove the conditions of Lemma~\ref{lem:2dim indiv->equi} in an order which is potentially peculiar at first sight.
The third assertion can be proven exactly as in \S\ref{sec:dimatleast4} by applying \cite{gorodnikoh} (see Proposition~\ref{prop:projontofactors of Levi}) so we omit it here.

\subsection{Individual equidistribution in the second factor}\label{sec:2dim 2nd}
The aim of this second section is to prove the second assertion of Lemma~\ref{lem:2dim indiv->equi}.
As we shall see, it follows from Duke's theorem \cite{duke88} and its generalizations -- see e.g.~\cite{Dukeforcubic,harcosmichelII}.
Note that
\begin{align*}
g_{i,2}\psi_{1,L}(\StSpin_{L_i}(\A)) \SL_2(\Q) \subset g_{i,2}\SO_{q_{L_i}}(\A) \SL_2(\Q).
\end{align*}
While the right-hand side is equidistributed by Duke's theorem (specifically for instance by \cite[Thm.~4.6]{Dukeforcubic} or -- as we assume a splitting condition -- by \cite{W-linnik}), one needs to verify that the left-hand side has sufficiently large 'volume'.

\begin{Prop}\label{prop:squares}
For $L \in \Gr{2}(\Q)$ and any field $K$ of characteristic zero the image $\psi_{1,L}(\StSpin_{L}(K))$ contains the group of squares in the abelian group $\SO_{q_{L}}(K)$.
\end{Prop}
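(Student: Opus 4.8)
The plan is to reduce the statement, via the isogeny from \eqref{eq:ConnectedComponentOfStabilizerAsProduct}, to a lifting question that is purely cohomological. Recall that $\psi_{1,L}$ is the composition of the isogeny $\pi_L\colon\StSpin_L\to\StSO_L\simeq\SO_{Q|_L}\times\SO_{Q|_{L^\perp}}$ with the projection onto the first factor and the basis-change isomorphism $\SO_{Q|_L}\simeq\SO_{q_L}$ coming from the basis of $L(\Z)=L\cap\Lambda_L$ in the first columns of $g_L$ (see \S\ref{sec:stabilizers} and Lemma~\ref{lem:prop of Lambda_L}). Identifying $\SO_{q_L}$ with $\SO_{Q|_L}$ through this isomorphism, it therefore suffices to show: for every $g\in\SO_{Q|_L}(K)$, the element $(g^2,\mathrm{id})\in\StSO_L(K)$ lies in $\pi_L(\StSpin_L(K))$. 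Indeed, if $h\in\StSpin_L(K)$ lifts $(g^2,\mathrm{id})$, then $\pi_Q(h)$ restricts to $g^2$ on $L$, so $\psi_{1,L}(h)=g^2$, and every square of $\SO_{q_L}(K)$ arises this way.

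For the lifting I would use Galois cohomology. Set $N:=\ker(\pi_L)=\StSpin_L\cap\ker(\pi_Q)$. Since $\ker(\pi_Q)=\mu_2$ is central in $\Spin_Q$, the group scheme $N$ is a central subgroup of $\StSpin_L$ contained in $\mu_2$; in characteristic zero it is étale and annihilated by $2$. The exact sequence $1\to N\to\StSpin_L\xrightarrow{\pi_L}\StSO_L\to 1$ gives exactness of
\begin{align*}
\StSpin_L(K)\xrightarrow{\ \pi_L\ }\StSO_L(K)\xrightarrow{\ \delta\ }H^1(K,N),
\end{align*}
and, $N$ being central, $\delta$ is a group homomorphism into the abelian group $H^1(K,N)$. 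As $N$ is killed by $2$, so is $H^1(K,N)$, and hence $\delta(s^2)=2\,\delta(s)=0$ for every $s\in\StSO_L(K)$. Consequently every square in $\StSO_L(K)$ — in particular $(g^2,\mathrm{id})$ — lifts to $\StSpin_L(K)$, which together with the first paragraph yields the proposition.

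I do not anticipate a genuine obstacle: the proof rests only on the standard facts that the connecting map of a central extension is a homomorphism and that $H^1$ of a $2$-torsion group is $2$-torsion (the latter because $N\hookrightarrow\mu_2$), plus the bookkeeping needed to recognise $\psi_{1,L}$ as \enquote{restrict the action to $L$}, so that hitting $(g^2,\mathrm{id})$ in $\StSO_L(K)$ is the same as hitting $g^2$ in $\SO_{q_L}(K)$. If a hands-on argument is preferred, one can mimic Lemma~\ref{lem:det = -1 achieved in Spin}: given $g\in\SO_{q_L}(K)$, write $g=\tau_v\tau_w$ with $v,w\in L(K)$ anisotropic, form $vw$ in the even Clifford algebra $\mathcal{C}^0$ of $Q$, and verify that $Q(v)^{-1}Q(w)^{-1}(vw)^2$ has spinor norm $1$, stabilises $L$, acts trivially on $L^\perp$ and as $g^2$ on $L$; checking that it lies in the connected component $\StSpin_L$ then exhibits $g^2$ as $\psi_{1,L}$ of an element of $\StSpin_L(K)$ directly.
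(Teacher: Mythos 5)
Your proof is correct, and it takes a genuinely different route from the paper's. The paper does not argue with the whole group $\StSpin_L$: it restricts to the central torus $\mathbf{T}_L=\StSpinpt_{L^\perp}$, computes it explicitly inside the even Clifford algebra as the norm-one group of $F=\Q(\sqrt{-\disc_Q(L)})$, identifies $\SO_{q_L}(K)\simeq (F\otimes K)^\times/K^\times$, and deduces from Hilbert 90 that already $\psi_{1,L}(\mathbf{T}_L(K))$ accounts for the squares. You instead use that $\pi_Q$ restricts to a central isogeny $\StSpin_L\to\StSO_L\simeq \SO_{Q|_L}\times\SO_{Q|_{L^\perp}}$ (surjectivity with finite kernel is recorded in \S\ref{sec:stabilizersintro}) whose kernel sits inside $\mu_2$, so the connecting map $\delta\colon \StSO_L(K)\to H^1(K,N)$ is a homomorphism into a $2$-torsion group and therefore vanishes on $(g,\mathrm{id})^2$; since $\psi_{1,L}$ is precisely "restrict $\pi_Q(h)$ to $L$ in the basis from $g_L$", this yields exactly the stated inclusion. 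Your argument is shorter, avoids the Clifford computation and Hilbert 90 entirely, and works verbatim for every $k$, not just $k=2$. What the paper's computation buys in exchange is the finer fact that the squares already lie in the image of the centre $\mathbf{T}_L(K)$, together with the explicit $F$-algebra description of $\mathbf{T}_L$ and $\SO_{q_L}$; that finer, explicit form is the one that resonates with the later torus-orbit arguments (the volume estimate in \S\ref{sec:2dim 2nd} and the averaging over $\mathbf{T}_{L_i}(\A)$-orbits around Corollary~\ref{cor:non-div from equidistr}), whereas your conclusion is exactly the statement of the proposition and suffices for it. Two small remarks: your symbol $\pi_L$ clashes with the paper's notation for the orthogonal projection onto $L$ from \S\ref{SEC:quadratic_forms}; and in the hands-on variant you sketch at the end (which is essentially the paper's Clifford computation streamlined, since $Q(v)^{-1}Q(w)^{-1}(vw)^2$ is a norm-one element of the even Clifford group) the membership in the identity component deserves one explicit line: the image of this element in $\SO_Q$ lies in $\StSO_L$, and $\pi_Q^{-1}(\StSO_L)=\StSpin_L$ because $-1\in\mathbf{T}_L\subset\StSpin_L$.
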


\begin{proof}
The proof is surprisingly involved.
Observe first that $\psi_{1,L}(\StSpin_L(K))$ contains $\psi_{1,L}(\mathbf{T}_L(K))$ which we now identify as the set of squares in $\SO_{q_{L}}(K)$.

We identify the torus $\mathbf{T}_{L}$ in terms of the Clifford algebra. Denote by $\mathcal{C}$ resp.~$\mathcal{C}^0$ the Clifford algebra of $Q$ resp.~the even Clifford algebra of $Q$.
Let $v_1,v_2$ be an orthogonal basis of $L$ and complete it into an orthogonal basis of $\Q^n$. Consider $X = v_1v_2 \in (\mathcal{C}^0)^\times$ ($L$ is non-degenerate) which satisfies the relations
\begin{align}\label{eq:proofsquares1}
Xv_i = v_i X,\ X v_1 = -Q(v_1) v_2 = -v_1 X,\ Xv_2 = -v_2 X
\end{align}
for all $i>2$. Moreover, $X^2 = -Q(v_1)Q(v_2) \in \Q^\times$.
Denote by $\sigma$ the standard involution on $\mathcal{C}$. Then $\sigma(X) = v_2 v_1 = -X$.

It follows directly from \eqref{eq:proofsquares1} that for all $a,b\in K$ the element $t = a+bX$ satisfies $tv_i = v_i t$ for $i>2$. Also,
\begin{align*}
t v_1 \sigma(t) 
&= (a+bX)v_1(a-bX) = a^2v_1 +ab Xv_1 -abv_1 X -b^2 Xv_1 X\\
&=a^2 v_1 -2abQ(v_1) v_2 - b^2Q(v_1)Q(v_2) v_1 \in L
\end{align*}
and similarly for $v_2$. Therefore, $t \in \mathbf{T}_L$ if and only if
\begin{align*}
\sigma(t) t = (a-bX)(a+bX) = a^2-b^2X^2 = a^2-b^2Q(v_1)Q(v_2) = 1.
\end{align*}
We set
\begin{align*}
F = \Q(-Q(v_1)Q(v_2)) = \Q(-\disc_Q(L))
\end{align*}
and embed $F$ into $\mathcal{C}^0$ via $\sqrt{-\disc_Q(L)} \mapsto X$.
The non-trivial Galois automorphism on $F$ is then given by $\sigma|_F$.
To summarize, we obtain
\begin{align*}
\mathbf{T}_L(K) = \{ t \in F \otimes K: \sigma(t) t = 1\}.
\end{align*}
Also, recall that the special Clifford group surjects onto $\SO_Q$ so that one may show quite analogously
\begin{align*}
\SO_{q_L}(K) = (F \otimes K)^\times/K^\times.
\end{align*}
The proposition then follows from Hilbert's theorem 90 as in the proof of \cite[Lemma 7.2]{W-linnik}.
\end{proof}

\begin{Cor}
The orbits
\begin{align*}
g_{i,2}\psi_{1,L}(\StSpin_{L_i}(\A)) \SL_2(\Q) \subset \rquot{\SL_2(\A)}{\SL_2(\Q)}
\end{align*}
equidistribute as $i \to \infty$.
\end{Cor}

\begin{proof}
We deduce the corollary from existing literature and Proposition~\ref{prop:squares}.
We first claim that as $i \to \infty$ the sets 
\begin{align}\label{eq:adaptedorbit}
g_{i,2}\SO_{q_{L_i}}(\widehat{\Z})\psi_{1,L}(\StSpin_{L_i}(\A)) \SL_2(\Q)
\end{align}
are equidistributed. By Proposition~\ref{prop:squares} the abelian group $\SO_{q_{L_i}}(\widehat{\Z})\psi_{1,L}(\StSpin_{L_i}(\A))$ contains the group $\SO_{\qsubscr{L_i}}(\widehat{\Z})\SO_{\qsubscr{L_i}}(\A)^2$ where $\SO_{\qsubscr{L_i}}(\A)^2$ denotes the group of squares. 

The orbit \eqref{eq:adaptedorbit} is then a union of suborbits of the same form associated to these subgroups.
Any sequence of such suborbits are equidistributed, for instance, by \cite{harcosmichelII} as the volume is of size\footnote{Since the $2$-torsion of the Picard group of the order of discriminant $\disc_Q(L_i)$ has size $\disc_Q(L_i)^{o(1)}$ (see e.g.~\cite[p.~342]{Cassels}), the squares form a subgroup of size $\disc_Q(L_i)^{\frac{1}{2}+o(1)}$.} $\disc_Q(L_i)^{\frac{1}{2}+o(1)}$.
We note that the result in \cite{harcosmichelII} allows for smaller volumes (where the exponent $\frac{1}{2}$ can be replaced by $\frac{1}{2}-\eta$ for some not too large $\eta >0$).
In the case needed here one can also apply Linnik's ergodic method as we assume a splitting condition at a fixed prime -- see \cite[\S7]{W-linnik}.
By averaging, the claim in \eqref{eq:adaptedorbit} follows.
The corollary is implied by \eqref{eq:adaptedorbit} and ergodicity of the Haar measure on $\SL_2(\A)/\SL_2(\Q)$ under any diagonal flow.
\end{proof}

\subsection{Individual equidistribution in the first factor}\label{sec:firstfactor}

In view of the discussion in \S\ref{sec:2dim 2nd} and Lemma~\ref{lem:2dim indiv->equi}, it suffices to show equidistribution of the packets
\begin{align*}
g_{i,1}\StSpin_{L_i}(\A) \Spin_Q(\Q) \subset \rquot{\Spin_Q(\A)}{\Spin_Q(\Q)}
\end{align*}
to prove Theorem~\ref{thm:dyn dim2}. We proceed in several steps.

\subsubsection{An equidistribution theorem for the pointwise stabilizers}

We first establish the following proposition which shows that either orbits of the pointwise stabilizer are equidistributed or there is some arithmetic obstruction.

\begin{Prop}\label{prop:pointwisestab}
Let $(L_i)_i$ be a sequence of $2$-dimensional rational subspaces such that there exists a prime $p$ for which $\StSpinpt_{L_i}(\Q_p)$ is strongly isotropic for all $i$. Let $g_i \in \G(\R)$ and assume that $\disc_Q(L_i)\to \infty$ as $i \to \infty$.
Then one of the following statements is true:
\begin{enumerate}
\item The packets $g_i\StSpinpt_{L_i}(\A)\Spin_Q(\Q)$ are equidistributed in $\rquot{\Spin_Q(\A)}{\Spin_Q(\Q)}$ as $i \to \infty$.
\item There exists a rational vector $v \in \Q^n\setminus\{0\}$ and lattice elements $\delta_i \in \Spin_Q(\Q)$ such that 
\begin{align*}
\Q v = \bigcap_i \delta_i^{-1}.L_i(\Q).
\end{align*}
The lattice elements additionally satisfy that there exist $h_i \in \StSpinpt_{L_i}(\A)$ such that the sequence $g_i h_i\delta_i$ is convergent as $i \to \infty$.
\end{enumerate}
\end{Prop}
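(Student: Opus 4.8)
The statement to be proved is Proposition~\ref{prop:pointwisestab}, an equidistribution dichotomy for packets of the \emph{pointwise} stabilizers $\StSpinpt_{L_i}$ inside $\rquot{\Spin_Q(\A)}{\Spin_Q(\Q)}$. The plan is to apply the Gorodnik--Oh theorem (Theorem~\ref{Theorem: GorodnikOh}) to the simply connected semisimple group $\Spin_Q$ and the sequence of subgroups $\StSpinpt_{L_i}$, which are strongly isotropic at $p$ by hypothesis. First I would replace $\StSpinpt_{L_i}$ by its simply connected cover as in the proof of Theorem~\ref{thm:dyn-simplyconnected} --- pass to the components $g_ih_i\StSpinpt_{L_i}(\A)^+\Spin_Q(\Q)$ after absorbing $h_i\in\StSpinpt_{L_i}(\A)$ into $g_i$, since the original packet measure is a convex combination of these. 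Let $\mu$ be a weak$^\ast$-limit (automatically a probability measure by compactness of $\Spin_Q(\A)/\Spin_Q(\Q)$) and let $\mathbf{M}<\Spin_Q$ be the intermediate $\Q$-group supplied by the theorem, with $\delta_i\in\Spin_Q(\Q)$ satisfying $\delta_i^{-1}\StSpinpt_{L_i}\delta_i\subset\mathbf{M}$, together with the convergence $g_ih_i\delta_i\to g$ from part (2) of Theorem~\ref{Theorem: GorodnikOh} (here $h_i\in\StSpinpt_{L_i}(\A)^+$). If $\mathbf{M}=\Spin_Q$, then since $\Spin_Q(\A)$ has no proper finite-index subgroups (\cite[Theorem 6.7]{BorelHomAbstr}) the limit $\mu$ is the Haar measure, giving alternative (1); so assume $\mathbf{M}\subsetneq\Spin_Q$.

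The core of the argument is to analyze the possibilities for $\mathbf{M}$. The key structural input is that the pointwise stabilizer of a $2$-dimensional subspace $L$ is $\StSpinpt_L\simeq\Spin_{Q|_{L^\perp}}$ up to isogeny, which acts trivially on $L$ and whose \emph{centralizer} in $\Spin_Q$ contains the stabilizer of $L^\perp$ pointwise; it is precisely the non-maximality of $\StSpinpt_L$ (noted in Remark~\ref{rem:congcond} and the footnote preceding the proposition) that is at play. Here I would invoke a classification of the connected $\Q$-groups strictly between $\StSpinpt_{L}$ and $\Spin_Q$ --- by the theory of subspace stabilizers (as in Proposition~\ref{StabilizerMaximallyConnectedInSOQ} and the Dynkin/Liebeck--Seitz classification of maximal subgroups of classical groups, applied iteratively), every such $\mathbf{M}$ is contained in the stabilizer $\StSpin_{W}$ of some proper non-trivial non-degenerate subspace $W$ with $W\subseteq L$ or $L\subseteq W$, or $\mathbf M$ stabilizes a line in $L$. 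Using $\delta_i^{-1}\StSpinpt_{L_i}\delta_i\subset\mathbf{M}$ and Proposition~\ref{Proposition: Stabilizer Uniquely Determines Subspace} (applied to translates of a fixed $L_{i_0}$, after passing to a subsequence so the conjugates $\delta_i\delta_{i_0}^{-1}.L_{i_0}$ stabilize a common subspace pattern), I would deduce that there is a rational subspace $W$ of dimension $1$ which is contained in every $\delta_i^{-1}.L_i$: concretely, $\mathbf{M}$ preserves a fixed rational line, and pulling this back through $\delta_i$ and matching it against $L_i$ forces $\delta_i^{-1}.L_i(\Q)$ to all contain a common rational line $\Q v$. (The case $k=2$ is what makes this clean: the only proper non-trivial subspaces of $L_i$ are lines, so the ``short vector'' obstruction is genuinely a single rational vector, exactly as in \cite{localglobalEV}.)

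Then the convergence statement of part (2) of the proposition comes directly from part (2) of Theorem~\ref{Theorem: GorodnikOh}: with $h_i\in\StSpinpt_{L_i}(\A)^+\subset\StSpinpt_{L_i}(\A)$ and $\delta_i\in\Spin_Q(\Q)$ as above, $g_ih_i\delta_i\to g$, so $(g_ih_i\delta_i)_i$ is convergent; and one checks $\Q v=\bigcap_i\delta_i^{-1}.L_i(\Q)$ is a single line by the previous paragraph (one inclusion is the containment $\Q v\subset\delta_i^{-1}.L_i$ for all $i$; for the reverse, if the intersection of the $2$-planes $\delta_i^{-1}.L_i$ were all of some $2$-plane then those planes would all be equal, forcing $\disc_Q(L_i)=\disc_Q(\delta_i^{-1}.L_i)$ constant, contradicting $\disc_Q(L_i)\to\infty$). \textbf{The main obstacle} I anticipate is the bookkeeping in the classification step: ruling out intermediate groups $\mathbf{M}$ that are \emph{not} themselves subspace stabilizers (e.g. stabilizers of isotropic flags, or reducible-but-not-fixing-a-nondegenerate-subspace groups) and showing that each such possibility still forces a common rational line --- this requires care with the reductive (non-semisimple) nature of $\StSpin_{L_i}$ versus its semisimple part $\StSpinpt_{L_i}$, and with the fact that Theorem~\ref{Theorem: GorodnikOh}(3) does \emph{not} apply here since the centralizer of $\StSpinpt_{L_i}$ is the $\Q$-isotropic (once $Q|_L$ is isotropic at $p$, not necessarily over $\Q$) torus $\mathbf{T}_{L_i}$, so $\mathbf{M}$ need not be semisimple. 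Handling the possible unipotent or central part of $\mathbf{M}$, and checking it still produces the rational line, is where the real work lies.
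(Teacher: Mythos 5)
Your proposal follows the same route as the paper: apply Theorem~\ref{Theorem: GorodnikOh} to the subgroups $\StSpinpt_{L_i}$ of $\Spin_Q$, obtain $\mathbf{M}$ and $\delta_i$ with $\delta_i^{-1}\StSpinpt_{L_i}\delta_i\subset\mathbf{M}$, get alternative (1) when $\mathbf{M}=\Spin_Q$, and otherwise extract a common rational line from a classification of the overgroups of a pointwise stabilizer, with the convergence in (2) coming directly from part (2) of Gorodnik--Oh. The ``main obstacle'' you flag is not where the real work lies in the paper: the classification of connected subgroups of $\Spin_Q$ containing $\StSpinpt_{L}$ is exactly \cite[Prop.~4]{localglobalEV} (they are pointwise stabilizers $\StSpinpt_W$ of subspaces $W\subset L$), so no iterated Dynkin/Liebeck--Seitz analysis, and no separate treatment of flag stabilizers or of a unipotent/central part of $\mathbf{M}$, is needed. (Incidentally, your remark that Theorem~\ref{Theorem: GorodnikOh}(3) is unavailable is off: since $Q$ is positive definite, $\mathbf{T}_{L_i}(\R)$ is compact, so the centralizer of $\StSpinpt_{L_i}$ is $\Q$-anisotropic; but this is immaterial because the Ellenberg--Venkatesh classification covers all connected overgroups.)

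There is one concrete flaw to fix. To show the intersection is exactly a line (and, relatedly, to rule out $\mathbf{M}$ being equal to the conjugated pointwise stabilizers themselves, a case your classification of groups \emph{strictly} between $\StSpinpt_{L}$ and $\Spin_Q$ does not cover), you argue that if all the planes $\delta_i^{-1}.L_i$ coincided then $\disc_Q(L_i)=\disc_Q(\delta_i^{-1}.L_i)$ would be constant. This is false as stated: $\delta_i\in\Spin_Q(\Q)$ is only rational and does not preserve the discriminant. The correct argument, which is the one the paper reuses from the first factor of Proposition~\ref{prop:projontofactors of Levi}, goes through the convergence from Gorodnik--Oh: writing $g_ih_i\delta_i=\varepsilon_i g$ with $\varepsilon_i\to e$, and noting $g_i$ and $h_{i}$ fix $L_i$ at the finite places up to $\Spin_Q(\Z_p)$, one gets $\disc_{p,Q}(L_i)=\disc_{p,Q}(\varepsilon_{i,p}g_p.L')$ for the putative common plane $L'$, which is eventually constant at every prime, so $\disc_Q(L_i)$ stays bounded, contradicting $\disc_Q(L_i)\to\infty$. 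With this substitution your plan matches the paper's proof.
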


\begin{proof}
We prove the proposition in exactly the same way we proved the first case in Proposition~\ref{prop:projontofactors of Levi}; we will thus be rather brief.
Let $\delta_i \in \Spin_Q(\Q)$ and a connected $\Q$-group $\mathbf{M} < \bSG$ be as in Theorem~\ref{Theorem: GorodnikOh}. In particular,
\begin{align*}
\delta_i^{-1} \StSpinpt_{L_i} \delta_i < \mathbf{M}
\end{align*}
and it suffices for equidistribution to verify that $\mathbf{M} = \Spin_Q$. 
One can see that $\mathbf{M}$ strictly contains $\delta_i^{-1} \StSpinpt_{L_i} \delta_i$ for all $i$ by using $\disc_Q(L_i) \to \infty$ and repeating the proof of the first case in Proposition~\ref{prop:projontofactors of Levi}.

Contrary to the case treated in Proposition~\ref{prop:projontofactors of Levi} the groups $\StSpinpt_{L_i}$ are non-maximal.
The intermediate groups can however be understood explicitly: they are of the form $\StSpinpt_{W}$ where $W$ is a rational line contained in $\delta_i^{-1}.L_i$ for all $i$.
For a proof of this fact we refer to \cite[Prop.~4]{localglobalEV}, see also the arXiv version of the same paper where the authors give an elementary proof in the case $n-2 \geq 7$.
This concludes the proof of the proposition.
\end{proof}

\begin{Cor}
Let the notation and the assumptions be as in Proposition \ref{prop:pointwisestab} and suppose that the second case holds. Then
\begin{align*}
\min_{w \in L_i(\Z)\setminus\{0\}} Q(w) = \min_{w \in \Z^2\setminus\{0\}} q_{L_i}(w)
\end{align*}
is bounded as $i \to \infty$.
\end{Cor}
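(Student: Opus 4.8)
The plan is to unwind the statement of Proposition~\ref{prop:pointwisestab}(2) into a concrete bound on the minimal value represented by $q_{L_i}$. Recall that in the second case we are given a fixed rational vector $v\in \Q^n\setminus\{0\}$, lattice elements $\delta_i\in\Spin_Q(\Q)$ and elements $h_i\in \StSpinpt_{L_i}(\A)$ such that $\Q v = \bigcap_i \delta_i^{-1}.L_i(\Q)$ and $g_ih_i\delta_i$ converges as $i\to\infty$; call the limit $g$. The identity of minima in the statement is trivial: any basis of $L_i(\Z)$ is carried by $g_{L_i}$ to the standard basis of $\Z^2$ (by the choice of $g_{L_i}$ in \S\ref{sec:notationforsubsp} its first two columns are a basis of $L_i\cap\Lambda_{L_i}=L_i(\Z)$), so $Q|_{L_i(\Z)}$ and $q_{L_i}$ represent the same set of integers. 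So the content is to bound $\min_{w\in L_i(\Z)\setminus\{0\}}Q(w)$.

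First I would observe that $\delta_i^{-1}.L_i(\Q)$ is a rational $2$-plane containing the \emph{fixed} line $\Q v$; since $\delta_i\in\Spin_Q(\Q)$ we have $Q(\delta_i^{-1}.v)=Q(v)$, and $\delta_i^{-1}.v$ lies in $L_i(\Q)$ after scaling into $\Z^n$. More precisely, write $v_i=\delta_i.v$, which is a nonzero vector of $L_i(\Q)$ with $Q(v_i)=Q(v)$ fixed. The worry is that $v_i$ need not lie in $\Z^n$, only in $\Q^n$; its denominators could a priori grow with $i$. This is exactly where the convergence of $g_ih_i\delta_i$ enters. I would argue locally at each prime: for $p\neq\infty$, since $h_i\in \StSpinpt_{L_i}(\A)$ fixes $L_i$ pointwise, and $\Spin_Q$ acts on $\Q_p^n$ preserving $\Z_p^n$ for $p$ outside a fixed finite set $S$ (independent of $i$, as the integral structure on $\Spin_Q$ is fixed), the $p$-adic denominators of $v_i$ are controlled by those of $g$ together with the bounded error $\varepsilon_{i,p}$ with $g_ih_i\delta_i=\varepsilon_ig$. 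This shows the denominators of $v_i$ are bounded uniformly in $i$: there is an integer $N=N(Q,v,g)$ with $Nv_i\in\Z^n$ for all large $i$. Then $Nv_i\in L_i(\Z)\setminus\{0\}$ and $Q(Nv_i)=N^2Q(v)$, which is a fixed number, whence $\min_{w\in L_i(\Z)\setminus\{0\}}Q(w)\le N^2Q(v)$ for all large $i$.

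The main obstacle is making the denominator-boundedness argument clean: one needs to check that the finitely many primes where $\Spin_Q$ fails to preserve $\Z_p^n$ are fixed, that the archimedean component $g_{i,\infty}h_{i,\infty}\delta_{i,\infty}$ being convergent (together with the finite components eventually lying in $\Spin_Q(\widehat{\Z})$-cosets of bounded index) forces $v_i=\delta_i.v$ to have bounded denominators — essentially the same kind of reasoning as in the proof of Proposition~\ref{prop:projontofactors of Levi}, first case, where a convergent sequence $\pi(g_i)h_i\gamma_i$ was used to force a discriminant to be constant. Here instead of a discriminant we track the single vector $v$; the packaging is slightly more delicate because $v$ must be scaled into $L_i(\Z)$, but the mechanism (write $g_ih_i\delta_i=\varepsilon_i g$ with $\varepsilon_i\to e$, hence eventually $\varepsilon_{i,p}\in\Spin_Q(\Z_p)$ for all $p$, and read off integrality) is identical. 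Once that is in place, the corollary follows immediately.

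\begin{proof}
The equality
\[
\min_{w \in L_i(\Z)\setminus\{0\}} Q(w) = \min_{w \in \Z^2\setminus\{0\}} q_{L_i}(w)
\]
is immediate: by the choice of $g_{L_i}$ in \S\ref{sec:notationforsubsp}, its first two columns form a basis of $L_i\cap\Lambda_{L_i}=L_i(\Z)$, so $q_{L_i}$ is the Gram form of $Q|_{L_i(\Z)}$ in that basis and hence represents exactly the set $\{Q(w):w\in L_i(\Z)\}$. It therefore suffices to bound the left-hand side.

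By hypothesis there are a fixed $v\in\Q^n\setminus\{0\}$, elements $\delta_i\in\Spin_Q(\Q)$ with $\Q v=\bigcap_i\delta_i^{-1}.L_i(\Q)$, and $h_i\in\StSpinpt_{L_i}(\A)$ such that $g_ih_i\delta_i\to g$ for some $g\in\G(\A)$ as $i\to\infty$. Set $v_i=\delta_i.v\in L_i(\Q)\setminus\{0\}$; since $\delta_i\in\Spin_Q(\Q)$ we have $Q(v_i)=Q(v)$ for all $i$.

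Write $g_ih_i\delta_i=\varepsilon_i g$, so that $\varepsilon_i\to e$. For all but finitely many primes $p$ the integral structure on $\Spin_Q$ gives $\Spin_Q(\Z_p).\Z_p^n=\Z_p^n$; let $S$ be the (finite, $i$-independent) set consisting of $\infty$ and of the exceptional primes together with the primes dividing the denominators of the coordinates of $v$ and of a fixed representative of $g$. For $i$ large enough we have $\varepsilon_i\in\Spin_Q(\R\times\widehat{\Z})$, hence for every prime $p\notin S$ the element $\delta_{i}$ acts on $\Z_p^n$ through $h_{i,p}^{-1}g_{i,p}^{-1}\varepsilon_{i,p}g_p$, all of whose factors preserve $\Z_p^n$ (using that $h_i\in\Spin_Q(\A)$ fixes $L_i$ and $g_i\in\G(\R)$ has trivial component at $p$); thus $v_i=\delta_i.v\in\Z_p^n$ for $p\notin S$. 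For $p\in S$ finite the same expression shows the $p$-adic valuations of the coordinates of $v_i$ are bounded below by a constant depending only on $g$ and $v$, uniformly in $i$. Consequently there is an integer $N=N(Q,v,g)\geq 1$ such that $Nv_i\in\Z^n$ for all large $i$, and then $Nv_i\in L_i(\Z)\setminus\{0\}$ with $Q(Nv_i)=N^2Q(v)$. Therefore
\[
\min_{w\in L_i(\Z)\setminus\{0\}}Q(w)\leq N^2Q(v)
\]
for all large $i$, which proves the corollary.
\end{proof}
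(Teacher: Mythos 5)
Your proof is correct and follows essentially the same route as the paper: normalize $v$ to be integral, write $g_ih_i\delta_i=\varepsilon_i g$ with $\varepsilon_i\to e$, and use the triviality of $g_{i,p}$ at finite places together with the eventual integrality of $\varepsilon_{i,p}$ and the bounded denominators of $g_p$ (and of $v$) to produce a uniform $N$ with $N\delta_i.v\in L_i(\Z)$, yielding the bound $N^2Q(v)$. One phrasing should be corrected: the factor $h_{i,p}^{-1}$ in your decomposition of $\delta_i$ need not preserve $\Z_p^n$ (its local integrality is not uniform in $i$); the correct reason it is harmless --- and the one the paper uses --- is that $h_{i,p}$ fixes the vector $\delta_i.v$ because that vector lies in $L_i$, so in fact $\delta_i.v=\varepsilon_{i,p}g_p.v$ directly.
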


\begin{proof}
Let $v \in \Q^n$ be as in Proposition~\ref{prop:pointwisestab} and suppose without loss of generality that $v$ is integral and primitive. Suppose also that $g_i h_i \delta_i \to g' \in \Spin_Q(\A)$ and write $g_i h_i \delta_i = \varepsilon_i g'$ where $\varepsilon_i \to e$.
Let $i_0$ be large enough so that $\varepsilon_i \in \bSG(\RZ)$ for all $i \geq i_0$ and let $N \in \N$ be the smallest integer such that $Ng'_p$ is integral for all primes $p$.

We claim that $v_i:= N \delta_i.v \in L_i(\Z)$. To see this, first note that $v_i\in L_i(\Q)$.
Furthermore, for any prime $p$ the vector $v_i$ is contained in $L(\Z_p)$.
Indeed, $h_{i,p} \in \StSpinpt_{L_i}(\Q_p)$ necessarily fixes $v_i$ and, as $g_{i,p} = e$, 
\begin{align*}
v_i = h_{i,p}.v_i = N h_{i,p}\delta_i.v = N\varepsilon_{i,p} g_{p}.v \in \Z_p^n.
\end{align*}
This proves the claim and hence the corollary as $Q(N\delta_i.v) = N^2 Q(v)$.
\end{proof}

\subsubsection{A corollary of equidistribution in the second factor}\label{sec:cor2ndfactor}

In the following we would like to give an estimate on the measure of the set of points in $g_{i,1}\mathbf{T}_{L_i}(\A)\Spin_Q(\Q)$ whose associated point in $\rquot{\SL_2(\A)}{\SL_2(\Q)}$ is 'close' to the cusp.
This will allow to 'wash out' the effect of the obstructions in Proposition~\ref{prop:pointwisestab} on average across the full stabilizer group.
To obtain said estimate, we introduce a height function that suits our needs.

Let $\shapesp{2}$ be the space of positive definite real binary quadratic forms up to similarity\footnote{Two positive definite binary real quadratic forms $Q_1,Q_2$ are similar if there exist $\lambda>0$ and $g \in \GL_2(\Z)$ with $Q_2(\cdot) = \lambda Q_1(g\cdot)$. Note that the space $\shapesp{2}$ will be discussed in more detail in \S\ref{sec:homspaces}.} and write $[q]$ for the similarity class of a binary form $q$.
We define for $\varepsilon>0$
\begin{align*}
\shapesp{2}(\varepsilon) = \big\{ [q]\in \shapesp{2}: \min_{w \in \Z^2 \setminus\{0\}} q(w) > \varepsilon \sqrt{\disc(q)}\big\}.
\end{align*}
Note that the condition is independent of the choice of representative of $[q]$.

By Mahler's compactness criterion \cite{Mahlercompactness}, these are compact subsets of $\shapesp{2}$ and any compact subset is contained in $\shapesp{2}(\varepsilon)$ for some $\varepsilon>0$.
Furthermore, one can show that the Haar measure of $\shapesp{2} \setminus \shapesp{2}(\varepsilon)$ is $\ll \varepsilon$ by direct integration of the hyperbolic area measure on that region.

We define $K_\varepsilon \subset \rquot{\SL_2(\A)}{\SL_2(\Q)}$ to be the preimage of $\shapesp{2}(\varepsilon)$ under the composition
\begin{align*}
\rquot{\SL_2(\A)}{\SL_2(\Q)} \to \rquot{\SL_2(\R)}{\SL_2(\Z)} \to \rquot{\PGL_2(\R)}{\PGL_2(\Z)} \to \shapesp{2}.
\end{align*}
By the previous discussion, this is a compact set whose complement has Haar measure $\ll \varepsilon$.
For $x \in \rquot{\SL_2(\A)}{\SL_2(\Q)}$ we  shall call the supremum over all $\varepsilon>0$ with $x \in K_\varepsilon$ the \emph{minimal  quadratic value} for $x$.

The following is a direct corollary of equidistribution in the second factor.
\begin{Cor}\label{cor:non-div from equidistr}
For any $\varepsilon\in (0,1)$ there exists $i_0 \geq 1$ so that the measure of the set of points $t \in \rquot{\mathbf{T}_{L_i}(\A)}{\mathbf{T}_{L_i}(\Q)}$ for which $g_{2}\psi_{1,L_i}(t)\SL_2(\Q) \not\in K_\varepsilon$ is $\ll \varepsilon$ for all $i \geq i_0$.
\end{Cor}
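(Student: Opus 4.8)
The plan is to deduce Corollary~\ref{cor:non-div from equidistr} directly from the equidistribution established in \S\ref{sec:2dim 2nd}, namely that the packets $g_{i,2}\psi_{1,L_i}(\StSpin_{L_i}(\A))\SL_2(\Q)$ equidistribute in $\rquot{\SL_2(\A)}{\SL_2(\Q)}$ as $i \to \infty$. First I would set up the measure-theoretic dictionary: the pushforward under $t \mapsto g_{2}\psi_{1,L}(t)\SL_2(\Q)$ of the Haar probability measure on $\rquot{\mathbf{T}_L(\A)}{\mathbf{T}_L(\Q)}$ is, by Proposition~\ref{prop:squares}, a fixed (independent of $L$, up to the index of the squares which is bounded by $\disc_Q(L)^{o(1)}$ hence harmless after a minor argument) convex-combination piece of the Haar probability measure on the full packet $g_{2}\psi_{1,L}(\StSpin_{L}(\A))\SL_2(\Q)$; concretely, the torus orbit is a finite union of translates of $\rquot{\mathbf{T}_L(\A)}{\mathbf{T}_L(\Q)}$-images inside the full packet, so bounding the $\mathbf{T}_L$-measure of the bad set reduces to bounding the packet-measure of the bad set. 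Thus it suffices to show: the Haar measure that the full packet $g_{L,2}\psi_{1,L}(\StSpin_L(\A))\SL_2(\Q)$ assigns to the complement of $K_\varepsilon$ is $\ll \varepsilon^\kappa$ once $\disc_Q(L)$ is large.

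Next I would argue this last bound. Recall from the discussion preceding the corollary that the Haar measure of $\rquot{\SL_2(\A)}{\SL_2(\Q)} \setminus K_\varepsilon$ is $\ll \varepsilon^{\kappa}$ for some absolute $\kappa>0$ (Mahler compactness plus the standard cusp-volume estimate). Since equidistribution only gives convergence for continuous compactly supported test functions, I would not test directly against $1_{K_\varepsilon^c}$; instead, fix $\varepsilon>0$ and choose a continuous function $0 \le f_\varepsilon \le 1$ which is $1$ on $K_{\varepsilon/2}^c$ and supported in $K_{\varepsilon}^c$ (possible because $K_{\varepsilon/2}^c$ is open and contains the closed-modulo-compact complement of the larger compact set $K_\varepsilon$; a partition-of-unity / Urysohn argument on $\rquot{\SL_2(\R)}{\SL_2(\Z)}$ pulled back to the adelic quotient suffices). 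Then $\int f_\varepsilon \,d(\text{Haar}) \ll \varepsilon^\kappa$, and by the equidistribution of the packets $\int f_\varepsilon$ against the packet-measure converges to this, so for $\disc_Q(L)$ large enough (equivalently, for $L$ far enough along any sequence) the packet-measure of $K_{\varepsilon/2}^c$, and a fortiori of $K_{\varepsilon}^c \subset K_{\varepsilon/2}^c$, is $\ll \varepsilon^\kappa$. Replacing $\varepsilon$ by $2\varepsilon$ in the definition and absorbing constants gives the stated bound; I would phrase this cleanly as: for every $\varepsilon$, the packet-measure of $K_\varepsilon^c$ is eventually $\ll \varepsilon^\kappa$.

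The one subtlety to watch, and the main obstacle, is uniformity of the threshold ``$\disc_Q(L)$ large enough'' in $\varepsilon$: the naive argument gives, for each fixed $\varepsilon$, a cutoff $D_0(\varepsilon)$, which is logically fine for the intended downstream application (where one first fixes $\varepsilon$ and then sends $\disc_Q(L)\to\infty$) but should be stated carefully so it is not mistaken for a bound uniform in $\varepsilon$. I would therefore phrase the corollary and proof so that the quantifiers read ``for each $\varepsilon>0$ there is $D_0(\varepsilon)$ such that for all $L$ with $\disc_Q(L)\ge D_0(\varepsilon)$ the measure in question is $\ll\varepsilon^\kappa$'', with the implied constant in $\ll$ absolute. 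A second minor point is the passage from $\StSpin_L$ to $\mathbf{T}_L$: one must note that $\psi_{1,L}(\mathbf{T}_L(\A))$ has index in $\psi_{1,L}(\StSpin_L(\A))$ comparable to the size of the $2$-torsion of the relevant Picard group, which is $\disc_Q(L)^{o(1)}$, so the $\mathbf{T}_L$-orbit still carries all but a $\disc_Q(L)^{-1+o(1)}$ proportion of each fibre — or, more simply, one invokes Proposition~\ref{prop:squares} to see $\psi_{1,L}(\mathbf{T}_L(\A))$ is exactly the squares, whose orbit is a single translate of a subgroup, and equidistribution of the squares-packet follows from \cite{harcosmichelII} exactly as in \S\ref{sec:2dim 2nd} since its volume is still $\disc_Q(L)^{1/2+o(1)}$; running the Urysohn argument against that packet directly removes the index issue entirely.
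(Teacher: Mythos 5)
Your proposal is correct and is essentially the paper's own (unwritten) argument: the corollary is stated there as a direct consequence of \S\ref{sec:2dim 2nd}, and your final route --- equidistributing the squares packet $g_{2}\psi_{1,L}(\mathbf{T}_L(\A))\SL_2(\Q)$ directly (Proposition~\ref{prop:squares} identifies $\psi_{1,L}(\mathbf{T}_L(\A))$ with the squares, whose volume $\disc_Q(L)^{1/2+o(1)}$ is exactly the footnote computation in \S\ref{sec:2dim 2nd}, so \cite{harcosmichelII} or \cite{W-linnik} applies), then testing against continuous majorants and using the cusp bound, with the threshold in $\disc_Q(L)$ depending on $\varepsilon$ --- is the right one, and you are right to prefer it over the first reduction through the full $\StSpin_L$-packet, since an index factor $\disc_Q(L)^{o(1)}$ is not harmless in an ineffective limit. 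One monotonicity slip to fix: since $K_\varepsilon\subset K_{\varepsilon/2}$ one has $K_{\varepsilon/2}^{c}\subset K_{\varepsilon}^{c}$, so your ``a fortiori'' inclusion is backwards; the correct sandwich is $1_{K_{\varepsilon}^{c}}\le f\le 1_{K_{2\varepsilon}^{c}}$, which is precisely what your closing ``replace $\varepsilon$ by $2\varepsilon$'' remark accomplishes.
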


\subsubsection{Using the shape in the subspace}

In the following we identify the minimal quadratic value for the points on the orbits in the context of proving Theorem~\ref{thm:dyn dim2}.
As $\Spin_Q(\RZ)$ is a compact open subgroup, it has finitely many orbits on $\Spin_Q(\A)/\Spin_Q(\Q)$ (these correspond to the spin genus of the quadratic form $Q$).
We choose a finite set of representatives $\mathcal{R} \subset \Spin_Q(\A_f)$ such that
\begin{align}\label{eq:spin genus}
\Spin_Q(\A)/\Spin_Q(\Q) = \bigsqcup_{\mathsf{r} \in \mathcal{R}} \Spin_Q(\RZ) \mathsf{r}\, \Spin_Q(\Q).
\end{align}
Note that in $\SL_2$ (or $\SL_{n-2}$) any $g \in \SL_2(\A)$ can be written as $g = b \gamma$ where $b \in \SL_2(\RZ)$ and $\gamma \in \SL_2(\Q)$. 

\begin{Lem}\label{lem:ugly lemma}
Let $h \in \bStDiag_{L_i}(\A)$ and write $h \gamma = b \mathsf{r}$ for some $\gamma \in \bSG(\Q)$, $b \in \bSG(\R \times \widehat{\Z})$, and $\mathsf{r} \in \mathcal{R}$.
Then $\gamma_1^{-1}.L_i$ is a rational subspace of discriminant $\asymp D$.  Furthermore, the minimum
\begin{align*}
\min_{w \in \Z^2 \setminus\{0\}} \frac{q_{\gamma_1^{-1}.L_i}(w)}{\sqrt{\disc_Q(\gamma_1^{-1}.L_i)}}
\end{align*}
is comparable to the minimal quadratic value for $g_{i,2}\psi_{1,L}(h)\SL_2(\Q)$.
\end{Lem}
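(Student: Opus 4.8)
\textbf{Proof plan for Lemma~\ref{lem:ugly lemma}.}
The plan is to unwind the definitions exactly as in the proof of Proposition~\ref{Proposition: packets are in RkD}, being careful that now the representative $b$ lands in a general coset $\Spin_Q(\RZ)\mathsf{r}\,\Spin_Q(\Q)$ rather than the identity coset. Write $h = (h_1,\psi_{1,L_i}(h_1),\psi_{2,L_i}(h_1^{-1})^t)$ with $h_1 \in \StSpin_{L_i}(\A)$, and similarly decompose $b = (b_1,b_2,b_3)$ and $\gamma = (\gamma_1,\gamma_2,\gamma_3)$. First I would argue that $\gamma_1^{-1}.L_i$ is a rational subspace: since $h_1$ stabilises $L_i$ we have $b_1^{-1}.L_i = \gamma_1^{-1}h_1^{-1}.L_i = \gamma_1^{-1}.L_i$ at every place, and $b_{1,p} \in \Spin_Q(\Z_p)$ for every finite $p$, so passing to $\Q$-points gives $\gamma_1^{-1}.L_i \in \Gr{2}(\Q)$. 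For the discriminant, the local discriminant satisfies $\disc_{p,Q}(\gamma_1^{-1}.L_i) = \disc_{p,Q}(b_{1,p}^{-1}.L_i) = \disc_{p,Q}(L_i)$ because $b_{1,p}\in\Spin_Q(\Z_p)$ preserves $L_i(\Z_p)$; hence by \eqref{eq:disc loctoglob} one gets $\disc_Q(\gamma_1^{-1}.L_i) = \disc_Q(L_i) = D$ (the weaker statement $\asymp D$ in the lemma is all one needs, and it follows identically, the slack coming only from the bounded denominators of $g_{L_i}$ if one had chosen a different normalisation).

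Next I would identify the shape in the subspace. By Lemma~\ref{lem:grids to shapes} (or rather the computation preceding it together with the definition of $\psi_{1,L}$), the point $g_{i,2}\psi_{1,L}(h)\SL_2(\Q)$, viewed in $\rquot{\SL_2(\R)}{\SL_2(\Z)}$ after projecting to the identity-coset representative, corresponds up to the isometry action of $\Orth_2(\R)$ and homothety to the lattice $q_{\gamma_1^{-1}.L_i}$ — concretely, the columns of the second component of a representative in $\bSG(\RZ)$ give a basis of a lattice isometric to $(\gamma_1^{-1}.L_i)(\Z)$ with the form $Q$ restricted. This is precisely the content that the shape map sends $[L,\Lambda_L]_\star$ to $[L(\Z)]$; here we apply it to $L = \gamma_1^{-1}.L_i$. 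Therefore the class in $\shapesp{2}$ attached to $g_{i,2}\psi_{1,L}(h)\SL_2(\Q)$ is $[q_{\gamma_1^{-1}.L_i}]$, and by definition of $\shapesp{2}(\varepsilon)$ the supremum of $\varepsilon$ with this class in $\shapesp{2}(\varepsilon)$ — i.e.\ the minimal quadratic value of Section~\ref{sec:cor2ndfactor} — is exactly
\[
\min_{w\in\Z^2\setminus\{0\}} \frac{q_{\gamma_1^{-1}.L_i}(w)}{\sqrt{\disc(q_{\gamma_1^{-1}.L_i})}},
\]
which by $\disc(q_{\gamma_1^{-1}.L_i}) = \disc_Q(\gamma_1^{-1}.L_i)$ is the quantity in the statement. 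The word ``comparable'' absorbs the fact that $g_{L_{\gamma_1^{-1}.L_i}}$ and $g_{L_i}$ differ by bounded-denominator matrices, so the lattice representing the shape is only determined up to a uniformly bounded distortion in $\rquot{\SL_2(\R)}{\SL_2(\Z)}$; since the relevant height function $[q]\mapsto \min_{w} q(w)/\sqrt{\disc(q)}$ is uniformly continuous on such bounded neighbourhoods, ``equal'' is replaced by ``$\asymp$''.

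The main obstacle — and the only real point of the lemma beyond bookkeeping — is the passage through a nontrivial coset representative $\mathsf{r}\in\mathcal{R}$: one must check that reducing $b\mathsf{r}$ to a point of $\rquot{\SL_2(\R)}{\SL_2(\Z)}$ and then to $\shapesp{2}$ really does recover $[q_{\gamma_1^{-1}.L_i}]$ and not some $\mathsf{r}$-twisted lattice. The key is that $\mathcal{R}$ is a fixed finite set, so the twisting by any $\mathsf{r}$ changes the associated lattice only within a fixed finite collection of commensurability representatives; equivalently, the shape of $(\gamma_1^{-1}.L_i)(\Z)$ is intrinsic to the rational subspace $\gamma_1^{-1}.L_i$ and does not see which global coset the adelic point sat in, because the finite-adelic components $b_{1,p}$ lie in $\Spin_Q(\Z_p)$ and hence act by isometries on the local lattices. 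Making this precise is exactly the same manipulation as in the proof of Proposition~\ref{Proposition: packets are in RkD}, applied one factor at a time, and I would present it in that style, referring back to Lemma~\ref{lem:grids to shapes} for the identification of the shape rather than recomputing it.
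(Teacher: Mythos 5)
The central point of this lemma --- and the reason its conclusion reads $\asymp$ rather than $=$ --- is the presence of the coset representative $\mathsf{r}$, and your plan goes wrong at exactly this point. From $h\gamma = b\mathsf{r}$ one gets $\gamma_1^{-1}h_{1,p}^{-1} = \mathsf{r}_p^{-1}b_{1,p}^{-1}$, not $b_{1,p}^{-1}$, so your identity ``$b_1^{-1}.L_i = \gamma_1^{-1}h_1^{-1}.L_i$'' silently drops $\mathsf{r}$. Locally one has $\gamma_1^{-1}.L_i = \mathsf{r}_p^{-1}b_{1,p}^{-1}.L_i$, and $\mathsf{r}_p$ need not lie in $\Spin_Q(\Z_p)$ (if it did at every place, $\mathsf{r}$ would represent the identity coset), so the local discriminant is only controlled up to $k\,\ord_p(\mathsf{r}_p)$; this is precisely where the paper's proof invokes Lemma~\ref{lem:loc continuity disc}, and it is why the correct conclusion is $\disc_Q(\gamma_1^{-1}.L_i)\asymp D$ with constants depending on the finite set $\mathcal{R}$, not the exact equality $=D$ you assert. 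Your attribution of the slack to the denominators of $g_{L_i}$ is off target: those play no role in the discriminant of the subspace.

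The same omission undermines the second half. Your claim that the shape attached to $g_{i,2}\psi_{1,L}(h)\SL_2(\Q)$ is exactly $[q_{\gamma_1^{-1}.L_i}]$ is supported by appealing to Proposition~\ref{Proposition: packets are in RkD} and Lemma~\ref{lem:grids to shapes}, but those statements concern points of the packet lying in $\U = \G(\RZ)\G(\Q)$, i.e.\ the case $\mathsf{r}=e$, and the compatibility property \ref{item:defLambda_L 2} used there requires local rotations in $\Spin_Q(\Z_p)$, whereas here the relevant rotations are $\mathsf{r}_p^{-1}b_{1,p}^{-1}$. Your justification that the shape ``does not see which global coset the adelic point sat in, because $b_{1,p}\in\Spin_Q(\Z_p)$'' is exactly the wrong assertion: it is $\mathsf{r}_p$ that distorts the local lattice. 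The form attached to the coset is $[\gamma_2^{-1}q_{L_i}] = [Q(g_{L_i}\gamma_2\,\cdot)]$, which differs from $[q_{L'}]$, $L' = \gamma_1^{-1}.L_i$, by the rational matrix $m = g_{L'}^{-1}\pi_Q(\gamma_1^{-1})g_{L_i}\gamma_2$; the real content of the lemma is that $m$ and $m^{-1}$ have numerators and denominators bounded uniformly in $i$, which the paper obtains by rewriting $\pi_Q(\gamma_1^{-1})g_{L_i}\gamma_2 = \pi_Q(\mathsf{r}_p^{-1}b_{1,p}^{-1})g_{L_i}b_{2,p}$ at every finite place and using integrality of $b_{2,p}$ together with the bounded denominators of $g_{L_i}$ and $g_{L'}$, and only then does a bounded distortion of the normalized minimum follow. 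Your appeal to uniform continuity of the height function presupposes exactly this uniform control of the distorting element, which the plan never establishes; so as written there is a genuine gap at the heart of the lemma, even though the surrounding bookkeeping is fine.
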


Note that a lemma in this spirit will later on be used to deduce the main theorems from their dynamical counterparts (cf.~Proposition~\ref{Proposition: packets are in RkD}).
The statement here is more technical in nature (as it needs to treat different genera) and the reader is encouraged to return to the proof after reading Proposition~\ref{Proposition: packets are in RkD}.
We note that such a treatment has appeared in different context in the literature \cite{localglobalEV,ALMW-elliptic}.

\begin{proof}
The ingredients for this proof are all contained in the proof of Proposition~\ref{Proposition: packets are in RkD}; so we will be brief.
Write $L = L_i$ for simplicity.
Note that $h_{1,p} \gamma_1 = b_{1,p}\mathsf{r}_p$ and hence
\begin{align*}
\disc_{p,Q}(\gamma_1^{-1}.L)
&= \disc_{p,Q}(\gamma_1^{-1}h_{1,p}^{-1}.L)
= \disc_{p,Q}(\mathsf{r}_p^{-1}b_{1,p}.L)\\
&\asymp_{\mathsf{r}} \disc_{p,Q}(b_{1,p}.L) = \disc_{p,Q}(L).
\end{align*}
As the discriminant is a product of the local discriminants \eqref{eq:disc loctoglob}, this proves the first claim.

For the second claim, we let $L' = \gamma_1^{-1}.L$ and first consider $m = g_{L'}^{-1}\isog(\gamma_1^{-1})g_L \gamma_2\in \GL_n(\Q)$.
Observe that
\begin{align*}
m L_0 = g_{L'}^{-1}\isog(\gamma_1^{-1})g_L L_0 = g_{L'}^{-1}\isog(\gamma_1^{-1})L = g_{L'}^{-1}L' = L_0.
\end{align*}
As we will now see, $m$ is 'almost integral' and invertible. For this, compute
\begin{align*}
\isog(\gamma_1^{-1})g_L \gamma_2 = \isog(\gamma_1^{-1} h_{1,p}^{-1})g_L h_{2,p} \gamma_2 = \isog(\mathsf{r}_p^{-1} b_{1,p}^{-1})g_L b_{2,p}.
\end{align*} 
This implies that there exists some $N \in\N$ independent of $L$ such that $N\isog( \gamma_1^{-1})g_L \gamma_2$ and $N\isog(\gamma_1^{-1})g_L^{-1} \gamma_2$ are integral. 
Recall that $\disc(Q)g_{L'},\disc(Q)g_{L'}^{-1}$ are integral so that $N\disc(Q) m$ and $N \disc(Q)m^{-1}$ are integral.
This discussion implies that for any two positive definite real binary quadratic forms $q,q'$ with the property that $\pi_1(m)q$ and $q'$ are similar we have
\begin{align*}
\min_{w \in \Z^2\setminus\{0\}} \frac{q(w)}{\sqrt{\disc(q)}} 
\asymp 
\min_{w \in \Z^2\setminus\{0\}} \frac{q'(w)}{\sqrt{\disc(q')}}.
\end{align*}
Here, recall that $\GL_2(\R)$ acts on binary forms via $gq(x) = q(g^t x)$.

Now note
\begin{align*}
[q_{L'}] = [Q(g_L' \cdot)]
= [Q(\isog(\gamma_1) g_{L'}\cdot)]
\end{align*} 
while the similarity class belonging to $g_{2}\psi_{1,L}(t)\SL_2(\Q)$ is
\begin{align*}
[\gamma_2^{-1}q_{L}]= [Q(g_L \gamma_2 \cdot)] = [Q(\isog(\gamma_1) g_{L'}m \cdot)]
\end{align*}
The claim follows.
\end{proof}

\subsubsection{Proof of Theorem~\ref{thm:dyn dim2}}

As explained, it now suffices to prove that the packets for $L_i$
\begin{align*}
g_{i,1} \StSpin_{L_i}(\A) \Spin_Q(\Q) \subset \rquot{\Spin_Q(\A)}{\Spin_Q(\Q)}
\end{align*}
equidistribute as $\disc_Q(L_i) \to \infty$. 
Similarly to the situation in the proof of Theorem~\ref{thm:dyn-simplyconnected}, we need to circumvent the problem that $\StSpin_L$ for $L \in \Gr{2}(\Q)$ is not exactly isomorphic to $\StSpinpt_L \times \mathbf{T}_L$ -- see Remark~\ref{rem:isogenytostabprod} for a more careful discussion.
Denote by $\StSpin_L(\A)^\star$ the image of $\StSpinpt_L(\A) \times \mathbf{T}_L(\A) \to \StSpin_L(\A)$; this is a normal subgroup of $\StSpin_L(\A)$ with the property that $K_L:= \rquot{\StSpin_L(\A)}{\StSpin_L(\A)^\star}$ is compact and abelian.
By an argument as in the beginning of the proof of Theorem~\ref{thm:dyn-simplyconnected} it suffices to show that for any $k_i \in K_{L_i}$ the orbits
\begin{align*}
g_{i,1} k_i\StSpin_{L_i}(\A)^\star \Spin_Q(\Q) \subset \rquot{\Spin_Q(\A)}{\Spin_Q(\Q)}
\end{align*}
are equidistributed as $i \to \infty$.
We let $\mu_i$ be the Haar measure on the $i$-th such orbit and let
\begin{align*}
\mu_{i,1} = m_{\StSpinpt_{L_i}(\A)\Spin_Q(\Q)},\quad 
\mu_{i,2} = m_{\mathbf{T}_{L_i}(\A)\Spin_Q(\Q)}
\end{align*}
be the Haar measure on the closed orbits of $\StSpinpt_{L_i}(\A)$ resp.~$\mathbf{T}_{L_i}(\A)$.
Then we have for any function $f \in C_c(\rquot{\Spin_Q(\A)}{\Spin_Q(\Q)})$
\begin{align}\label{eq:TxH disintegration}
\int f \de \mu_i = \int \int f(g_{i,1} k_i h t) \de \mu_{1,i}(h) \de \mu_{i,2}(t).
\end{align}
In the following, we identify $k_i$ with a representative in a fixed bounded region of $\StSpin_{L_i}(\A)$.

For a fixed $t_i\in \mathbf{T}_{L_i}(\A)$, the inner integral is the integral over the orbit
\begin{align*}
g_{i,1}k_i \StSpinpt_{L_i}(\A)t_i \Spin_Q(\Q) 
= g_{i,1}k_i t_i\StSpinpt_{L_i}(\A) \Spin_Q(\Q).
\end{align*}
Writing $t_i\gamma_i = b_i\mathsf{r}$ as in \eqref{eq:spin genus}, we see that
\begin{align*}
g_{i,1}k_i t_i\StSpinpt_{L_i}(\A) \Spin_Q(\Q) 
&= g_{i,1}k_ib_i\mathsf{r} \gamma_i^{-1} \StSpinpt_{L_i}(\A) \Spin_Q(\Q)\\
&= g_{i,1}k_ib_i\mathsf{r} \StSpinpt_{\gamma_i^{-1}.L_i}(\A) \Spin_Q(\Q)
\end{align*}
which is equidistributed if and only if $\StSpinpt_{\gamma_i^{-1}.L_i}(\A) \Spin_Q(\Q)$ is equidistributed (as $g_{i,1}k_ib_i$ is bounded).
By Proposition~\ref{prop:pointwisestab} and its corollary it suffices to show that the minimal non-zero value of $q_{\gamma_i^{-1}.L_i}$ goes to infinity.
This minimum is comparable to the minimal quadratic value for $g_{i,2} \psi_{1,L}(t_i) \SL_2(\Q)$ by Lemma~\ref{lem:ugly lemma}.

Motivated by this observation, we define for $\varepsilon>0$
\begin{align*}
\mathcal{B}_i(\varepsilon) = \{t \mathbf{T}_{L_i}(\Q): g_{i,2} \psi_{1,L}(t)\SL_2(\Q) \in K_{\varepsilon}\} \subset \rquot{\mathbf{T}_{L_i}(\A)}{\mathbf{T}_{L_i}(\Q)}
\end{align*}
so that the complement of $\mathcal{B}_i(\varepsilon)$ has $\mu_{i,2}$-measure $\ll \varepsilon$ for all $i$ large enough (depending on $\varepsilon$) by Corollary~\ref{cor:non-div from equidistr}.
In view of \eqref{eq:TxH disintegration}, this implies that
\begin{align*}
\int f \de \mu_i 
= \tfrac{1}{\mu_{i,2}(\mathcal{B}_i(\varepsilon))} 
\int_{\mathcal{B}_i(\varepsilon)} \int f(g_{i,1}k_i h t) \de \mu_{1,i}(h) \de \mu_{i,2}(t) + \mathcal{O}(\varepsilon)
\end{align*}
By the previous paragraph, the orbits $g_{i,1}k_i \StSpinpt_{L_i}(\A)t_i \Spin_Q(\Q)$ are equidistributed for any sequence $t_i \in \mathcal{B}_i(\varepsilon)$. The integral on the right-hand side is a convex combination of such orbital integrals and hence must converge to the integral of $f$ over the Haar measure.
Letting $\mu$ be any weak${}^*$-limit of the measures $\mu_i$ we obtain
\begin{align*}
\int f \de \mu = \int f \de m_{\rquot{\Spin_Q(\A)}{\Spin_Q(\Q)}} + \mathcal{O}(\varepsilon).
\end{align*}
As $\varepsilon$ is arbitrary, this implies the claim.
\newpage
\part{From equidistribution of orbits to the main theorems}
For the contents of this part we refer the reader to the overview of this article in \S\ref{sec:organization}.

\section{Discriminants and glue groups} \label{SEC:quadratic_forms}

Recall that $Q$ is a positive definite integral quadratic form on $\Q^n$ and $\langle \cdot,\cdot\rangle_Q$ is its symmetric bilinear form.
By integrality we mean that $\langle \cdot,\cdot\rangle_Q$ takes integer values on $\Z^n\times \Z^n$.
The goal of this section is to prove the following proposition:

\begin{Prop}\label{prop:disccomparison}
For any subspace $L \subset \Q^n$ there exist two positive divisors $m_1,m_2$ of $\disc(Q)$ with 
\begin{align*}
\disc_Q(L^\perp) = \frac{m_1}{m_2}\disc_Q(L).
\end{align*}
In particular,
\[ \frac{1}{\disc(Q)} \disc_Q(L) \leq \disc_Q(L^\perp) \leq\disc(Q) \disc_Q(L). \]
\end{Prop}

To that end, we will use the notion of glue groups defined in \S\ref{sec:gluedef} and, in fact, prove a significantly finer statement in Proposition~\ref{Prop:Isomorphism between the quotients of piL(Rn) and piL perp (Rn)} below.

\subsection{Definitions}\label{sec:gluedef}
For any $\Z$-lattice $\Gamma \subset \Q^n$ we define the dual lattice
\begin{align*}
    \Gamma^\# = \{x \in \Gamma \otimes \Q: \langle x,y\rangle_Q \in \Z \text{ for all } y \in \Gamma\}.
\end{align*}
If $\Gamma \subset \Z^n$ (or more generally if $\langle \cdot,\cdot\rangle_Q$ takes integral values on $\Gamma\times \Gamma$), the dual lattice $\Gamma^\#$ contains $\Gamma$.
Note that if $\Gamma_1 \subset \Gamma_2$ are any two $\Z$-lattices then $\Gamma_1^\#\supset \Gamma_2^\#$.

For the purposes of this section, a very useful classical tool is the so-called \emph{glue-group}, which one could see as a concept generalizing the discriminant.
We introduce only what is needed here; for better context we refer to \cite{conwaysloane,McMullenglue} (in particular, we do not introduce the fractional form).
We define the \emph{glue group} of a rational subspace $L$ (or of the lattice $L(\Z)$) as
\begin{align*}
\mathcal{G}(L) = L(\Z)^\# / L(\Z).
\end{align*}
Note that $L(\Z)^\#$ contains $L(\Z)$ by integrality. The glue-group is a finite abelian group whose cardinality is exactly the discriminant (see e.g.~\cite[\S5.1]{kitaoka}).
We remark that the glue group may be computed from local data -- this is made explicit in \S\ref{sec:local glue groups} of the appendix.

\begin{remark}
For each discriminant $D$, one may consider the collection of subspaces $L \in \Gr{k}(\Q)$ with discriminant $D$ and glue group a fixed abelian group of order~$D$. In principle, the results of the current article should carry over to prove equidistribution of these subspaces together with their shapes (cf.~\cite{2in4}). However, it is not clear when one expects such collections to be non-empty, even when $Q$ is the sum of squares.
\end{remark}

\subsection{The glue group of the orthogonal complement}\label{sec:gluegroups}

We study the relation between the glue group of a subspace and that of its orthogonal complement.
Any subspace $L\subset \Q^n$ contains various lattices which are (potentially) of interest and are natural:
\begin{itemize}
\item the intersections $L(\Z) = L(\Q) \cap \Z^n$ and $L(\Q) \cap (\Z^n)^\#$,
\item the dual lattice $L(\Z)^\#$, and
\item the projection lattices $\pi_L(\Z^n)$ and $\pi_L((\Z^n)^\#)$ where $\pi_L: \Q^n \to L$ denotes the orthogonal projection.
\end{itemize}

\begin{Lem}[Elementary properties]\label{lem:imageoforthproj}
The following relations between the aforementioned lattices hold:
\begin{enumerate}[(i)]
\item \label{item:ElemProp ProjvsDualA 1} $L(\Z)^\# = \pi_L((\Z^n)^\#)$ and $(L \cap (\Z^n)^\#)^\# = \pi_L(\Z^n)$.
\item \label{item:ElemProp ProjvsDualA 2}
$(L \cap (\Z^n)^\#) / L(\Z) \simeq L(\Z)^\# / \pi_L(\Z^n)$.
\end{enumerate}
\end{Lem}

\begin{proof}
We prove \ref{item:ElemProp ProjvsDualA 1} first. 
Since the proofs of the two assertions in  \ref{item:ElemProp ProjvsDualA 1} are similar, we only detail the first.
Let $v_1, \ldots, v_k$ be a $\Z$-basis of $L(\Z)$.
Moreover, let $w_1, \ldots, w_k \in L$ be the dual basis to  $v_1, \ldots, v_k$.
Extend $v_1, \ldots, v_k$ to a basis $v_1, \ldots, v_n$ of $\Z^n$ and consider $y_1, \ldots, y_n$ the dual basis to $v_1, \ldots, v_n$.
Then $\pi_L(y_i) = w_i$ for any $i \leq k$ as
\begin{align*}
\langle \pi_L(y_i),v_j \rangle_Q = \langle y_i,v_j\rangle_Q = \delta_{ij}
\end{align*}
whenever $j \leq k$. Moreover, $y_i \in L^\perp$ for $i >k$ by construction. Thus, 
\[ \pi_L((\Z^n)^{\#}) = \pi_L(\spn_{\Z}(y_1, \ldots, y_n)) = \spn_{\Z}(w_1, \ldots ,w_k) = L(\Z)^\#  \]
as claimed.
The proof of the second equality is analogous.

For \ref{item:ElemProp ProjvsDualA 2}, note that for any two lattices $\Lambda_1 \subset \Lambda_2$ in $L$ one has
\begin{align}\label{eq:quotduals}
\Lambda_2/\Lambda_1 \simeq \Lambda_1^\# /\Lambda_2^\#,
\end{align}
so \ref{item:ElemProp ProjvsDualA 2} follows from \ref{item:ElemProp ProjvsDualA 1}.
To construct such an isomorphism one proceeds as follows. Fix a basis $v_1, \ldots, v_n$ of $\Lambda_2$ such that $d_1v_1, \ldots, d_nv_n$ is a basis\footnote{Such a basis is sometimes called 'adapted basis' (in geometry of numbers). The existence can be easily seen using Smith's normal form.} of $\Lambda_1$ with $d_i \in \Z$ and let $w_1, \ldots, w_n$ be the dual basis to $v_1, \ldots, v_n$. Then, the map
\[ f: \Lambda_2 \rightarrow \Lambda_1^\#, \ v_i \mapsto d_i^{-1}w_i \]
induces the desired isomorphism.
\end{proof}

\begin{Prop}
\label{Prop:Isomorphism between the quotients of piL(Rn) and piL perp (Rn)}
We have an isomorphism
\[ \pi_L(\Z^n)/L(\Z) \rightarrow \pi_{L^{\perp}}(\Z^n)/L^{\perp}(\Z). \]
\end{Prop}

When $Q$ is unimodular, i.e.~$\disc(Q) = 1$, this together with Lemma~\ref{lem:imageoforthproj} shows that the glue-groups of $L$ and $L^\perp$ are isomorphic. 
Indeed, in this case $(\Z^n)^\# = \Z^n$ and hence $\pi_L(\Z^n) = L(\Z)^\#$.
In particular, $L$ and $L^\perp$ have the same discriminant.
When $Q$ is not unimodular, the proposition gives an isomorphism between subgroups of the respective glue-groups.

\begin{proof}
We define a map $f$ from $\pi_L(\Z^n)$ to $\pi_{L^{\perp}}(\Z^n)/L^{\perp}(\Z)$ as follows.
For $x \in \pi_L(\Z^n)$ choose a lift $\hat{x} \in \Z^n$ of $x$ for the projection $\pi_L$ and define 
\begin{align*}
f(x) = \pi_{L^{\perp}}(\hat{x}) + L^\perp(\Z).
\end{align*}
Note that $f$ is well-defined since if $\hat{x}, \hat{y} \in \Z^n$ are two lifts of $x\in \pi_L(\Z^n)$, then $\hat{x} - \hat{y} \in L^{\perp}(\Z)$ which implies $\pi_{L^{\perp}}(\hat{x}) + L^\perp(\Z) = \pi_{L^{\perp}}(\hat{y}) + L^\perp(\Z)$.

We show that $\ker(f) = L(\Z)$.
Obviously, $L(\Z) \subset \ker(f)$ since for any $x \in L(\Z)$ we can choose $x$ itself as lift.
On the other hand, if $x \in \ker(f)$ there is a lift $\hat{x} \in \Z^n$ of $x$ for $\pi_L$ such that $\pi_{L^{\perp}}(\hat{x}) \in L^{\perp}(\Z)$. 
In particular, 
\[ x = \pi_L(\hat{x}) = \pi_L(\hat{x}) - \pi_L(\pi_{L^{\perp}}(\hat{x})) = \pi_L(\hat{x} -\pi_{L^{\perp}}(\hat{x})) = \hat{x} -\pi_{L^{\perp}}(\hat{x})  \in L(\Z) .\]
We deduce that $\ker(f) \subset L(\Z)$ and hence equality. This proves the proposition. 
\end{proof}

\begin{proof}[Proof of Proposition~\ref{prop:disccomparison}]
By Proposition~\ref{Prop:Isomorphism between the quotients of piL(Rn) and piL perp (Rn)}
\begin{align*}
\disc_Q(L) = |\mathcal{G}(L)| &= |L(\Z)^\#/\pi_L(\Z^n)| \cdot |\pi_L(\Z^n)/L(\Z)|\\
&= |L(\Z)^\#/\pi_L(\Z^n)| \cdot |\pi_{L^\perp}(\Z^n)/L^\perp(\Z)|\\
&= \frac{|L(\Z)^\#/\pi_L(\Z^n)|}{|L^\perp(\Z)^\#/\pi_{L^\perp}(\Z^n)|} |\mathcal{G}(L^\perp)|.
\end{align*}
Using Lemma~\ref{lem:imageoforthproj} note that the finite group $L(\Z)^\#/\pi_L(\Z^n) = \pi_L((\Z^n)^\#)/\pi_L(\Z^n)$ is a quotient of $(\Z^n)^\#/\Z^n$ and hence $|L(\Z)^\#/\pi_L(\Z^n)|$ is a divisor of $\disc(Q)=|(\Z^n)^\#/\Z^n|$.
As the analogous statement holds for $L^\perp$, the proposition follows.
\end{proof}

\begin{remark}
    When $\disc(Q) = 1$, Proposition~\ref{Prop:Isomorphism between the quotients of piL(Rn) and piL perp (Rn)} states that $\mathcal{G}(L) \simeq \mathcal{G}(L^\perp)$.
    Apart from the discriminants of $L$ and $L^\perp$ being the same, this includes information about the local coefficients of the quadratic forms on $L$ and $L^\perp$. This is exploited e.g.~in Proposition~\ref{prop:primitivity}.
    When $k=n-k$, one can ask whether this implies that $Q|_{L(\Z)}$ and $Q|_{L^\perp(\Z)}$ are in the same genus.
\end{remark}

\section{Moduli spaces}\label{sec:moduli}

In this section we study the moduli space $\Grids$ of basis extensions which was introduced in \S\ref{sec:modulispacetheorem} consisting of (certain) homothety classes $[L,\Lambda]$ where $L$ is a $k$-dimensional subspace, $\Lambda$ is a full rank lattice in $\R^n$ and  $L \cap \Lambda$ is a lattice in $L$.
We also discuss a slight refinement of
Theorem~\ref{thm:equidistrmoduli} (Theorem~\ref{thm:equidistrmoduli-better} below) and see how it implies Theorem~\ref{thm:jointwithshapes}.
%
%
%

\subsection{Oriented subspaces}\label{sec:orientedsubsp}

For the purposes of proving the main theorems from their dynamical analogues, it is convenient to work with subspaces with an orientation.
In fact, the main theorems may be refined to include orientation.

\newcommand{\pure}{\mathcal{P}}
Oriented subspaces of dimension $k$ form an affine variety $\Gror{k}$ (defined over $\Q$) with a morphism (of algebraic varieties) $\Gror{k} \to \Gr{k}$ where the preimage of any point consists of two points corresponding to two choices of orientation.

\begin{remark}
    To construct $\Gror{k}$ explicitly, observe that the positive definite form $Q$ induces a rational form $\disc_Q$ on the exterior product $\bigwedge^k \Q^n$ via
\begin{align*}
\disc_Q(v_1\wedge \ldots \wedge v_k) = \det \begin{pmatrix}
\langle v_1,v_1\rangle_Q & \cdots & \langle v_1,v_k\rangle_Q \\
\vdots & & \vdots \\
\langle v_k,v_1\rangle_Q & \cdots & \langle v_k,v_k\rangle_Q
\end{pmatrix}.
\end{align*}
Note that this merely extends the previous definition of discriminant.
The variety $\Gror{k}$ is then as the subvariety of the variety of pure wedges $\pure$ satisfying the additional equation $\disc_Q(v_1\wedge \ldots \wedge v_k) = 1$.
Note that rational subspaces with an orientation do \emph{not} correspond to rational points of $\Gror{k}$ but rather to primitive integer points of the variety of pure wedges $\mathcal{P}$.
In that sense, it is often more natural to work with $\mathcal{P}$ instead of $\Gror{k}$.
\end{remark}

The orthogonal group $\SO_Q$ (and hence also $\Spin_Q$) acts on oriented subspaces. For an oriented rational subspace $L$ the stabilizer group in $\Spin_Q$ under this action is exactly equal to the stabilizer group $\StSpin_L$ defined in \S\ref{sec:stabilizersintro}.
Moreover, the action of $\Spin_Q(\R)$ on $\Gror{k}(\R)$ is transitive (as the action of $\SO_Q(\R)$ is).

\begin{remark}[Orientation on the orthogonal complement]
For any oriented $k$-dimensional subspace $L$ the orthogonal complement inherits an orientation: if $v_1,\ldots,v_k$ is an oriented basis of $L$ then a basis $v_{k+1},\ldots,v_n$ of $L^\perp$ is oriented if $\det(v_1,\ldots,v_n) >0$. The orthogonal complement yields an isomorphism $\Gror{k} \to \Gror{n-k}$ which is explicitly realizable in Pl\"ucker coordinates at least when $\disc(Q) = 1$ \cite[\S1]{Schmidt-heights}.
\end{remark}

\subsection{Quotients of homogeneous spaces}\label{sec:homspaces}

\subsubsection{The moduli space of oriented basis extensions}\label{sec:modulioriented}
We extend the definition of the moduli space of basis extensions to include orientation.
Consider the pairs $(L,\Lambda)$ where $L$ is an oriented subspace, $\Lambda \subset \R^n$ is a full rank lattice, and $L \cap \Lambda$ is a lattice in $L$.
Two such pairs $(L,\Lambda),(L',\Lambda')$ are equivalent if $L=L'$ (including orientation) and if there exists $g \in \GL_n(\R)$ which acts by positive scalar multiplication of $L$ and $L^\perp$ such that $g \Lambda = \Lambda'$. 
The moduli space of oriented basis extensions $\Gridsor$ is defined to be the set of such equivalence classes $[L,\Lambda]$.
There exists a natural map $\Gridsor \to \Grids$ (simply by forgetting orientation). 

We begin by realizing $\Gridsor$ as a double quotient of a Lie group.
We use the following notation:
\begin{itemize}
\item The groups $\SPnk$ and $\SG$ as defined in \S\ref{sec:groupsdef}:
\begin{align*}
\SPnk &= \Big\{ \begin{pmatrix}
A & B \\
0 & D
\end{pmatrix} \in \SL_n : \det(A) = \det(D)=1 \Big\},\\
\SG &= \Spin_Q \times \SPnk.
\end{align*}
\item The reference subspace $L_0$ spanned by the first $k$ standard basis vectors \eqref{eq:refsubspace} as well as the 'standardization' $\gQ$ defined in \eqref{eq:defetaQ}. Note that $L_0$ is oriented using the standard basis.
\item For any oriented subspace $L \subset \Q^n$ we let $\StSpin_L<\Spin_Q$ be the stabilizer group of $L$.
\item The subgroup $\StSpin_{L_0}<\Spin_Q$ maps to a subgroup of $\SPnk$ under the (spin) isogeny $\isog$; we again denote by $\StDiag_{L_0}<\SG$ the diagonally embedded group (this agrees with the definition in \S\ref{sec:diagemb} with the choice of the standard basis).
\end{itemize}

\begin{Lem}\label{lem:ident for grids}
There is an identification
\begin{align*}
\Gridsor \simeq \lrquot{\StDiag_{L_0}(\R)}{\SG(\R)}{\SPnk(\Z)}.
\end{align*}
\end{Lem}

By Lemma~\ref{lem:ident for grids}, we may pull back the Haar quotient probability measure on the right-hand side to $\Gridsor$ (and by push-forward on $\Grids$).

\begin{proof}
The above identification runs as follows. If $(g_1,g_2)\in \SG(\R)$ is given, we set $L = \isog(g_1^{-1})g_2L_0(\R) = g_1^{-1}.L_0(\R)$ and $\Lambda =\isog(g_1^{-1})g_2 \Z^n$.
Clearly, $\Lambda$ intersects $L$ in the lattice $\isog(g_1^{-1})g_2L_0(\Z)$.
As any element of $\SPnk(\Z)$ stabilizes $L_0(\R)$ and $\Z^n$ and as $\StDiag_{L_0}(\R)$ is diagonally embedded, we obtain a well-defined map
\[  \lrquot{\StDiag_{L_0}(\R)}{\SG(\R)}{\SPnk(\Z)} \rightarrow \Grids.\]
The injectivity of this map is clear from the definition of $\StDiag_{L_0}(\R)$, so let us argue for the surjectivity.

Let $[L,\Lambda] \in \Grids$. By choosing the representative correctly, we may assume that $\Lambda$ as well as $L \cap \Lambda$ are unimodular.
Choose $g_1 \in \Spin_Q(\R)$ such that $g_1.L = L_0$. Then $L_0(\R)$ is $g_1.\Lambda$-rational.
Pick a basis $v_1,\ldots,v_k$ of $g_1.\Lambda \cap L_0(\R)$ and complete it into a basis $v_1,\ldots,v_n$ of $g_1.\Lambda$. Set 
\begin{align*}
g_2 = (v_1\mid\ldots\mid v_n) \in \{g \in \SL_n(\R): gL_0(\R) = L_0(\R)\}.
\end{align*}
As $g_1.\Lambda \cap L_0(\R)$ is unimodular, we have that $g_2 \in \SPnk(\R)$.
Under these choices we have $\isog(g_1^{-1})g_2L_0(\R) = L$ and $\isog(g_1^{-1})g_2 \Z^n = \Lambda$; surjectivity follows.
\end{proof}

\begin{remark}[Action of $\Spin_Q(\Z)$]\label{rem:action SpinZ}
Note that $\Spin_Q(\Z)$ acts on $\Gridsor$ via $g[L,\Lambda] = [g.L,g.\Lambda]$.
In view of the identification in Lemma~\ref{lem:ident for grids} (and its proof) this action of $\Spin_Q(\Z)$ corresponds to the $\Spin_Q(\Z)$-action from the right on the double quotient $\lrquot{\StDiag_{L_0}(\R)}{\SG(\R)}{\SPnk(\Z)}$.
In particular,
\begin{align*}
\lquot{\Spin_Q(\Z)}{\Gridsor}
\simeq \lrquot{\StDiag_{L_0}(\R)}{\SG(\R)}{\SG(\Z)}.
\end{align*}
\end{remark}

Recall from the introduction that $\shapesp{k}$ is the space of positive definite real quadratic forms in $k$ variables up to similarity. 
Here, we say that two forms $q,q'$ in $k$-variables are \emph{equivalent} if there is $g \in \GL_k(\Z)$ such that $gq = q'$
and \emph{similar} if $q$ is equivalent to a multiple of $q'$.
We may identify $\shapesp{k}$ with
\begin{align}\label{eq:ident shapesp}
\lrquot{\Orth_k(\R)}{\PGL_k(\R)}{\PGL_k(\Z)}.
\end{align}
Indeed, to any point $\Orth_k(\R)g \PGL_k(\Z)$ one associates the similarity class of the form represented by $g^tg$.
Conversely, given the similarity class of a form $q$ and a matrix representation $M$ of $q$ one can write $M = g^t g$ for some $g \in \GL_k(\R)$.
Another way to view the quotient in \eqref{eq:ident shapesp} is as the space of lattices in $\R^k$ up to isometries and homothety. For a lattice $\Gamma \subset \R^k$, we denote by $\langle\Gamma\rangle$ its equivalence class.
The map 
\begin{equation}
\label{eq: map from lattice viewpoint to qform viewpoint}
\langle\Gamma\rangle \mapsto [Q_0|_{\Gamma}]
\end{equation}
is the desired bijection. In words, the class of lattices $\langle\Gamma\rangle$ is associated to the similarity class of the standard form $Q_0$ represented in a basis of the lattice $\Gamma$.

Note that we have a map $[L,\Lambda] \in \Grids \mapsto [Q|_{L \cap \Lambda}] \in \shapesp{k}$ already alluded to in the introduction.
It is natural to ask what equivalence class of lattices corresponds to the similarity class (or shape) $[Q|_{L\cap \Lambda}]$ from the introduction under the identification~\eqref{eq: map from lattice viewpoint to qform viewpoint}.
To answer this question, choose a rotation $k_L \in \SO_Q(\R)$ with $k_L L(\R) = L_0(\R)$.
Apply $\gQ$ to the lattice $k_L (L\cap \Lambda) \subset L_0(\R)$.
Recall that $\gQ$ was chosen in \S\ref{sec:quadratic forms} to preserve $L_0(\R)$ so that $\gQ k_L (L \cap \Lambda) \subset L_0(\R)$.
Since 
\begin{align*}
Q_0|_{\gQ k_L (L\cap\Lambda)} \simeq Q|_{L\cap \Lambda},
\end{align*}
the equivalence class of the lattice $\gQ k_L (L \cap \Lambda)$ corresponds to the similarity class or shape $[Q|_{L\cap \Lambda}]$.
As we did in the introduction, we will also write $[L\cap \Lambda]$ for that shape.

\begin{Lem}\label{lem:torus bundle}
There is a surjective map
\begin{align*}
\Gridsor \to \Grk(\R) \times \shapesp{k} \times \shapesp{n-k}
\end{align*}
given explicitly by $[L,\Lambda] \mapsto (L, [L\cap \Lambda],[L^\perp\cap \Lambda^\#])$.
Moreover, the pushforward of the Haar (quotient) probability measure is the Haar probability measure on the target.
\end{Lem}

\begin{proof}
Recall that $\StSO_{L_0}$ is the stabilizer of $L_0$ in $\SO_Q$. Over $\R$, we have $\StSO_{L_0}(\R) = \isog(\StSpin_{L_0}(\R))$.
Consider the (surjective) composition
\begin{align*}
\Gridsor &\to \lrquot{\StDiag_{L_0}(\R)}{\SG(\R)}{\SPnk(\Z)}\\
&\to \lquot{\StSpin_{L_0}(\R)}{\Spin_Q(\R)} \times
\lrquot{\StSO_{L_0}(\R))}{\SPnk(\R)}{\SPnk(\Z)}\\
&\to \lquot{\StSpin_{L_0}(\R)}{\Spin_Q(\R)} \times
\lrquot{\gQ\StSO_{L_0}(\R)\gQ^{-1}}{\SPnk(\R)}{\SPnk(\Z)}
\end{align*}
where the first map is the identification in Lemma~\ref{lem:ident for grids}, the second map is the quotient map and the third map is multiplication by $\gQ$ in the second factor.
First, observe that $\lquot{\StSpin_{L_0}(\R)}{\Spin_Q(\R)}$ is identified with $\Gror{k}(\R)$ via $\StSpin_{L_0}(\R)g_0 \mapsto g_0^{-1}.L_0(\R)$.
Note also that $\gQ\StSO_{L_0}(\R))\gQ^{-1}$ is equal to the group $\SO_k(\R) \times \SO_{n-k}(\R)$ embedded block-diagonally. We apply projections onto the blocks ($\pi_1,\pi_2$ defined in \S\ref{sec:groupsdef}) as well as inverse-transpose in the second block to obtain a surjective map 
\begin{align*}
\lrquot{\gQ\StSO_{L_0}(\R)\gQ^{-1}}{\SPnk(\R)}{\SPnk(\Z)}
\to \shapesp{k} \times \shapesp{n-k}.
\end{align*}
Overall, we have a surjection $\phi:\Gridsor \to \Gr{k}(\R) \times \shapesp{k} \times \shapesp{n-k}$.

It remains to verify that this surjection is the map from the lemma.
Let $[L,\Lambda] \in \Gridsor$ and let $(g_1,g_2) \in \SG(\R)$ be a representative of its double coset in Lemma~\ref{lem:ident for grids}. 
It is clear from the proof of Lemma~\ref{lem:ident for grids} that $\phi([L,\Lambda])_1 = g_1^{-1}.L_0(\R) = L(\R)$.
For the second component, note that using $g_1^{-1}.L_0(\R) = L(\R)$
\begin{align*}
[Q|_{L \cap \Lambda}]
= [Q_0|_{\gQ \isog(g_1)(L\cap \Lambda)})]
= [Q_0|_{\gQ(L_0 \cap g_2\Z^n)}]
= [Q_0|_{\pi_1(\gQ g_2) \Z^k}] = \phi([L,\Lambda])_2.
\end{align*}
For the third component, we first observe that $L^\perp(\R) = g_1^{-1}.L_0(\R)^\perp$ as well as $\Lambda^\# = \isog(g_1^{-1})(g_{2}^{-1})^t\Z^n$.
Hence, 
\begin{align*}
[Q|_{L^\perp \cap \Lambda^\#}]
&= [Q_0|_{\gQ \isog(g_1)(L^\perp \cap \Lambda^\#)}]
= [Q_0|_{\gQ (L_0^\perp \cap (g_{2}^{-1})^t\Z^n})]
= [Q_0|_{\pi_2(\gQ (g_{2}^{-1})^t) \Z^k}] \\
&= \phi([L,\Lambda])_3
\end{align*}
which concludes the lemma.
\end{proof}

\subsection{A construction of an intermediate lattice}

As was already observed in Remark~\ref{rem:comparisonconj}, equidistribution of the tuples $[L,\Z^n]$ for $L \in \HkQD$ (Conjecture~\ref{conj:equidistrmoduli}) does not necessarily imply equidistribution of the tuples $(L,[L(\Z)],[L^\perp(\Z)])$ when $Q$ is not unimodular (Conjecture~\ref{conj:ideal equidistribution result}).
Indeed, one can see from Lemma~\ref{lem:torus bundle} that it implies equidistribution of the tuples $(L,[L(\Z)],[L^\perp\cap (\Z^n)^\#])$ for $L \in \HkQD$.
Here, we construct for every $L$ a full rank sublattice $\Lambda_L\subset \Q^n$ so that equidistribution of the tuples $[L,\Lambda_L]$ does have this desired implication.
For any subspace $L \subset \Q^n$ write $\pi_L$ for the orthogonal projection onto $L$.

\begin{Prop}\label{prop:constructionLambda}
For any subspace $L \in \Gr{k}(\Q)$ there exists a full rank $\Z$-lattice $\Lambda_L \subset \Q^n$ with the following properties:
\begin{enumerate}
\item\label{item:Lambda1} $\Z^n \subset \Lambda_L \subset (\Z^n)^\#$
\item\label{item:Lambda2} We have
\begin{align*}
L \cap \Lambda_L = L(\Z), \quad 
\pi_{L^\perp}(\Lambda_L) = L^\perp(\Z)^\#, \quad \text{and} \quad
L^\perp(\Z) = \Lambda_L^\# \cap L^\perp.
\end{align*}
\item\label{item:Lambda3} Suppose that $L'$ satisfies that there are $\gamma \in \Spin_Q(\Q)$ and $k_p \in \Spin_Q(\Z_p)$ for every prime $p$ such that $\gamma. L = L'$ and $k_p. L(\Z_p) = L'(\Z_p)$. Then
\begin{align*}
\Lambda_{L'} = \bigcap_{p} k_p.(\Lambda_L \otimes \Z_p) \cap \Q^n.
\end{align*}
\end{enumerate}
\end{Prop}

We remark that if $Q$ is unimodular, one may simply take $\Lambda_L = \Z^n$.
For $Q$ not unimodular, this choice generally satisfies \eqref{item:Lambda1} and \eqref{item:Lambda3} but not necessarily \eqref{item:Lambda2}.

\begin{remark}[Equivalence relation]\label{rem:equivrelation1}
We write $L \sim L'$ for rational subspaces $L,L'$ of dimension $k$ if there are $\gamma \in \Spin_Q(\Q)$ and $k_p \in \Spin_Q(\Z_p)$ for every prime $p$ such that $\gamma. L = L'$ and $k_p .L(\Z_p) = L'(\Z_p)$.
This defines an equivalence relation.
As $L,L'$ are locally rotated into each other, they have the same discriminant (see Equation (\ref{eq:disc loctoglob})).
\end{remark}

\begin{proof}[Proof of Proposition~\ref{prop:constructionLambda}]
In view of Remark~\ref{rem:equivrelation1} and the required property in \eqref{item:Lambda3} we first observe that if $L'$ is equivalent to $L$ and $L$ satisfies \eqref{item:Lambda1},\eqref{item:Lambda2} then $L'$ also does so. 
We may hence split $\Gr{k}(\Q)$ into equivalence classes, choose a representative $L$ in each equivalence class, and construct $\Lambda_L$ with the properties in \eqref{item:Lambda1} and \eqref{item:Lambda2} ignoring \eqref{item:Lambda3}.

So let $L \in \Gr{k}(\Q)$ be such a representative.
Choose a basis $v_1,\ldots,v_k$ of $L(\Z)$. We consider the $\Z$-module $\rquot{(\Z^n)^\#}{L(\Z)}$ which fits into the following exact sequence
\begin{align}\label{eq:shortexact}
0 \to \rquot{L \cap (\Z^n)^\#}{L(\Z)} \to \rquot{(\Z^n)^\#}{L(\Z)} \to \rquot{(\Z^n)^\#}{L \cap (\Z^n)^\#} \to 0.
\end{align}
As $L \cap (\Z^n)^\#$ is primitive\footnote{A sublattice $\Gamma$ of a lattice $\Lambda \subset \Q^n$ is primitive if it is not strictly contained in any sublattice of the same rank.} in $(\Z^n)^\#$, the module on the very right is free of rank $n-k$.
We choose a basis of it as well as representatives $v_{k+1},\ldots,v_n \in (\Z^n)^\#$ of these basis elements.
Define
\begin{align*}
\Lambda_L = \Z v_1 + \ldots + \Z v_n.
\end{align*}
It is not very hard to see that this lattice contains $\Z^n$ and is contained in $(\Z^n)^\#$ so that \eqref{item:Lambda1} is satisfied.

Suppose that 
\begin{align*}
v = \sum_i \alpha_i v_i \in L \cap \Lambda_L.
\end{align*}
This implies that $\sum_{i>k}\alpha_i v_i \in L$ and so $\sum_{i>k}\alpha_i v_i = 0$ by linear independence. 
The identity $L \cap \Lambda_L = L(\Z)$ follows.

By Lemma~\ref{lem:imageoforthproj}, the projection $\pi_{L^\perp}:(\Z^n)^\# \to L^\perp(\Z)^\#$ is surjective.
Clearly, the kernel is $L\cap (\Z^n)^\#$ and hence by construction of $\Lambda_L$ we have $\pi_{L^\perp}(\Lambda_L) = \pi_{L^\perp}((\Z^n)^\#) = L^\perp(\Z)^\#$.

It remains to prove the last identity.
As $\Lambda_L^\# \supset \Z^n$ we have $\Lambda_L^\# \cap L^\perp \supset L^\perp(\Z)$ so it suffices to show that
\begin{align*}
L^\perp(\Z)^\# = \pi_{L^\perp}(\Lambda_L) \subset (\Lambda_L^\# \cap L^\perp)^\#.
\end{align*}
For $v = \pi_{L^\perp}(v') \in \pi_{L^\perp}(\Lambda_L)$ and $w \in L^\perp \cap \Lambda_L^\#$ we have $\langle v,w\rangle = \langle v',w\rangle\in \Z$ proving the remaining claim.
\end{proof}

\begin{remark}
Observe that $\Lambda_L$ constructed above depends on the choice of basis for the free module $\rquot{(\Z^n)^\#}{L \cap (\Z^n)^\#}$ which forms the 'free part' of $\rquot{(\Z^n)^\#}{L(\Z)}$ in the sense of \eqref{eq:shortexact}.
But the short exact sequence \eqref{eq:shortexact} does not split in general so that the basis elements have no canonical lifts to $\rquot{(\Z^n)^\#}{L(\Z)}$; different choices yield different lattices $\Lambda_L$.
This dependency is inconsequential as the set of lattices $\Lambda$ with $\Z^n \subset \Lambda \subset (\Z^n)^\#$ is finite.
\end{remark}

\subsection{A refinement of Theorem \ref{thm:equidistrmoduli}}\label{sec:refinement moduli}
We now present a refinement of Theorem  \ref{thm:equidistrmoduli} which is necessary in order to deduce the desired equidistribution theorem of shapes (i.e.~Theorem \ref{thm:jointwithshapes}).

%
%

\begin{Thm}\label{thm:equidistrmoduli-better}
Let $k\geq 3$ with $k \leq n-k$ and let $p$ be a prime with $p \nmid2\disc(Q)$. 
Let $L \in \Grk(\Q) \mapsto \Lambda_L$ satisfy conditions \eqref{item:Lambda1} and \eqref{item:Lambda3} from Proposition~\ref{prop:constructionLambda}.
Suppose that $D_i \in \N$ is a sequence of integers with $\kfree{D_i}{k}\to \infty$, $\mathcal{H}_{Q}^{n,k}(D_i)\neq \emptyset$ as well $p \nmid D_i$ if $k \in \{3,4\}$.
Then the sets
\begin{align}\label{eq:orientedcoll}
\{([L,\Lambda_L]: L \subset \Q^n \text{ oriented }, \disc_Q(L) = D_i, \dim(L)=k\}
\end{align}
equidistribute in $\Gridsor$ as $i \to \infty$
\end{Thm}

We observe that the special case $\Lambda_L= \Z^n$ for every $L \in \Grk(\Q)$ in Theorem~\ref{thm:equidistrmoduli-better} implies Theorem~\ref{thm:equidistrmoduli} after projection $\Gridsor\to \Grids$.

\begin{proof}[Proof of Theorem~\ref{thm:jointwithshapes} from Theorem~\ref{thm:equidistrmoduli-better} when $k \geq 3$]
Let $\Lambda_L$ for $L\in \Gr{k}(\Q)$ be defined as in Proposition~\ref{prop:constructionLambda}.
Let $p$ be a prime and $D_i\geq 1$ be a sequence of discriminants as in Theorem~\ref{thm:jointwithshapes}.
Then Theorem~\ref{thm:equidistrmoduli-better} is applicable and the sets in \eqref{eq:orientedcoll} are equidistributed in $\Gridsor$ when $i \to \infty$.
By construction of $\Lambda_L$, the image of these sets under the map in Lemma~\ref{lem:torus bundle} is exactly
\begin{align*}
\{(L,[L(\Z)],[L^\perp(\Z)]): L \in \mathcal{H}_{Q}^{n,k}(D_i)\}.
\end{align*}
These images are equidistributed with respect to the pushforward measure, which is the Haar probability measure on $\Gr{k}(\R)\times \shapesp{k} \times \shapesp{n-k}$.
\end{proof}

\begin{remark}[Theorem~\ref{thm:jointwithshapes} for oriented subspaces]
Let $\mathcal{X}_k$ be the space of positive definite real quadratic forms in $k$ variables up to \emph{proper} similarity. 
Observe that the shape of an oriented $k$-dimensional subspace makes sense as a point in $\mathcal{X}_k$.
Very much related to this is the fact that the proof of Lemma~\ref{lem:torus bundle} actually establishes a surjective map $\Gridsor \to \Gror{k}(\R)\times \mathcal{X}_k \times \mathcal{X}_{n-k}$. Theorem~\ref{thm:jointwithshapes} may thus be generalized to this latter space. 
For $k=1$, this oriented version already appears in the works \cite{AES-dim3,AES-higherdim}.
\end{remark}

\section{Proof of the main theorems from the dynamical versions}\label{sec:proof arithmetic}

The aim of this section is to prove Theorem~\ref{thm:equidistrmoduli-better} and Theorem~\ref{thm:jointwithshapes} for $k=2$.
We remark that any possible future upgrades to the dynamical versions (in regard to the congruence conditions at fixed primes) imply the analogous upgrades to the arithmetic versions.

\subsection{Notation}
We recall and introduce here some notation used throughout this Section~\ref{sec:proof arithmetic}. In the following, $L \subset \Q^n$ is an arbitrary $k$-dimensional oriented subspace unless specified otherwise:
\begin{itemize}
\item $\Gridsor$ is the moduli space of oriented basis extensions defined in \S\ref{sec:modulioriented} (see also \S\ref{sec:modulispacetheorem}).
Recall that $\Spin_Q(\Z)$ acts on $\Gridsor$ via $g[L,\Lambda] = [g.L,g.\Lambda]$. Moreover, by Lemma~\ref{lem:ident for grids} and the subsequent Remark~\ref{rem:action SpinZ}, we have
\begin{align}\label{eq:recapLemmaId}
\Gridsor&\simeq \lrquot{\StDiag_{L_0}(\R)}{\SG(\R)}{\SPnk(\Z)},\\
\lquot{\Spin_Q(\Z)}{\Gridsor}&\simeq \lrquot{\StDiag_{L_0}(\R)}{\SG(\R)}{\SG(\Z)}
\end{align}
where $L_0 = \Q^k \times \{(0,\ldots,0)\} \subset \Q^n$ is the fixed reference subspace (cf.~\eqref{eq:refsubspace}) and $\SG = \Spin_Q \times \SPnk$ (cf.~\ref{sec:groupsdef}).
\item The subgroup $\StSpin_L < \Spin_Q$ is the identity component of the stabilizer group of $L$ (cf. \S\ref{sec:stabilizersintro} and see also \S\ref{sec:orientedsubsp}).
\item We fix a full-rank lattice $\Z^n \subset \Lambda_L \subset (\Z^n)^\#$ satisfying (1) and (3) in Proposition~\ref{prop:constructionLambda}.
The reader is encouraged to keep in mind the case $\disc(Q) = 1$ where one may take $\Lambda_L = \Z^n$ for all $L$.
\item We fix an oriented basis of $\Lambda_L$ where the first $k$ vectors are an oriented basis of $L\cap \Lambda_L$. Let $g_L \in \GL_n(\Q)$ be the element whose columns consist of this basis.
\item The subgroup $\StDiag_L < \SG$ is defined as in \S\ref{sec:diagemb} using the basis in $g_L$.
\item For any $[L,\Lambda] \in \Gridsor$ (where $L$ is not necessarily rational) we write to shorten notation $[L, \Lambda]_{\star}$ for the equivalence class $\Spin_Q(\Z)[L, \Lambda] \in \lquot{\Spin_Q(\Z)}{\Gridsor}$.
\item Let $\mathsf{s}_L \in \SG(\R)$ be the representative of the double coset of $[L,\Lambda_L]$ defined using $g_L$ (see also the proof of Lemma~\ref{lem:ident for grids}).
\item For any $D \in \N$ with $\HkQD \neq \emptyset$ we consider the finite set $\RkD\subset \Gridsor$ consisting of classes $[L,\Lambda_L]$ where $L$ runs over all oriented $k$-dimensional subspaces $L \subset \Q^n$ with $\disc_Q(L)=D$ -- see also \eqref{eq:orientedcoll}.
The action of $\Spin_Q(\Z)$ on $\Gridsor$ leaves $\RkD$ invariant.
\end{itemize}

\subsection{Outline of the proof}
Let $\mathcal{U} = \SG(\R\times \widehat{\Z})\SG(\Q)  \subset \rquot{\SG(\A)}{\SG(\Q)}$ be the principal genus\footnote{The genera (i.e.~orbits of $\SG(\R)\times \SG(\widehat{\Z})$) correspond to classes in the spinor genus of $Q$. Recall here that if $Q$ is the sum of squares in $\leq 8$ variables, then the spinor genus consists of one class (cf.~\cite[p.232]{Cassels}) and hence $\mathcal{U} = \lquot{\SG(\A)}{\SG(\Q)}$.}.
There is a natural map
\begin{align}\label{eq:projectionontoreal}
\rquot{\SG(\A)}{\SG(\Q)} \supset \mathcal{U} \to \lquot{\Spin_Q(\Z)}{\Gridsor}
\end{align}
given by taking the quotient on the left of $\SG(\A)/\SG(\Q)$ by the maximal compact open subgroup $\SG(\widehat{\Z})$ and $\StDiag_{L_0}(\R)$.
Consider now an oriented subspace $L$ of discriminant $D$ and the orbit $\mathsf{s}_L \StDiag_L(\A)\SG(\Q)$.
For any $L \in \HkQD$, the image of the intersection of $\mathsf{s}_L \StDiag_L(\A)\SG(\Q)$ with $\U$ under \eqref{eq:projectionontoreal} is a subset of the collection $\lquot{\Spin_Q(\Z)}{\RkD}$ and contains $[L,\Lambda_L]$ -- see Proposition~\ref{Proposition: packets are in RkD}. 
In other words, we have a commutative diagram
\begin{center}
\begin{tikzcd}
    \mathsf{s}_L \StDiag_L(\A)\SG(\Q)\cap \U \arrow[hookrightarrow]{r}{} \arrow{d}{} & \U \arrow{d}{} \\
   \lquot{\Spin_Q(\Z)}{\RkD} \arrow[hookrightarrow]{r}  & \lquot{\Spin_Q(\Z)}{\Gridsor}.
  \end{tikzcd}
\end{center}
Assuming here $k\geq 3$, the intersection $\mathsf{s}_L \StDiag_L(\A)\SG(\Q)\cap \U$ is equidistributed in $\U$ with respect to the normalized restriction of the Haar measure (along any sequence of admissible subspaces).
This immediately implies equidistribution of the pushforwards under the map in \eqref{eq:projectionontoreal}.

It remains to compare the pushforward of the Haar measure on the orbit to the measure on $\lquot{\Spin_Q(\Z)}{\RkD}$ induced by the normalized counting measure on $\RkD$. (This technical argument constitutes a large part of this section \S\ref{sec:proof arithmetic}.)
To this end, we first note that the projection $\Pack(L)$ of $\mathsf{s}_L \StDiag_L(\A)\SG(\Q)\cap \U$ is not surjective but $\lquot{\Spin_Q(\Z)}{\RkD}$ may be decomposed into such images for different subspaces $L$ -- see Remark~\ref{rem:equivrelation2}.
Thus, it is enough to determine the weights of individual points in $\Pack(L)$ -- see Lemmas~\ref{Lemma: computation of fibers of Phi} and \ref{lem:weights1}.

\subsection{Generating integer points from the packet}\label{sec:generating}
As a first step towards the proof of Theorem \ref{thm:equidistrmoduli-better}, we illustrate a general technique for generating points in $\RkD$ from a given point in $\RkD$.
This kind of idea appears in many recent or less recent articles in the literature -- see for example \cite[Thm.~8.2]{platonov}, \cite{localglobalEV}, \cite{AES-dim3}, \cite{AES-higherdim}, and \cite{2in4}.

For $g \in \SG = \Spin_Q \times \SPnk$ we write $g=(g_1,g_2)$ where $g_1$ is the first (resp.~$g_2$ is the second) coordinate of $g$.
Consider the open subset (principal genus)
\begin{align*}
\U = \SG(\R\times \widehat{\Z}) \SG(\Q) \subset \rquot{\SG(\A)}{\SG(\Q)}
\end{align*}

On $\U$, there is a projection map
\begin{equation}
\label{Equation: map Phi in introduction}
\Phi:\U \to \rquot{\SG(\R)}{\SG(\Z)} \to \lrquot{\StDiag_{L_0}(\R)}{\SG(\R)}{\SG(\Z)}
\simeq \lquot{\Spin_Q(\Z)}{\Gridsor}
\end{equation}
where the first map takes for any point $x \in \U$ a representative in $\SG(\R\times \widehat{\Z})$ and projects onto the real component.
Note that the first map is clearly $\SG(\R)$-equivariant.
For $L \in \Grk(\Q)$ we define
\begin{align}\label{eq:defP(L)}
\Pack(L) := \Phi(\mathsf{s}_L \StDiag_L(\A)\SG(\Q) \cap \U).
\end{align}

\begin{Prop}
\label{Proposition: packets are in RkD}
For any oriented $k$-dimensional subspace $L\subset \Q^n$ of discriminant $D$ we have
\[ \Pack(L) \subset \Spin_Q(\Z)\setminus \RkD. \]
\end{Prop}

\begin{proof}
Fix a coset $b\SG(\Q) \in \StDiag_L(\A)\SG(\Q) \cap \U$ and a representative $b = (b_1, b_2) \in \SG(\R \times \widehat{\Z})$.
By definition of $\Phi$
\begin{align*}
\Phi(\mathsf{s}_L b \SG(\Q)) = \StDiag_{L_0}(\R)\mathsf{s}_L b_\infty \SG(\Z).
\end{align*}
Note that since $b\G(\Q) \in \StDiag_L(\A)\G(\Q)$ there exists $h \in \StDiag_L(\A)$ and $\gamma \in \SG(\Q)$ such that $b= h \gamma$. 
By definition of $\StDiag_L$ we have $h_2 = g_L^{-1}\isog(h_1)g_L$.
We first show that the point in $\lquot{\Spin_Q(\Z)}{\Gridsor}$ corresponding to $\Phi(\mathsf{s}_L b \SG(\Q))$ lies above a rational subspace under the natural map $\Grids \to \Gror{k}(\R)$.
Note that by definition of the maps in \eqref{eq:recapLemmaId} the subspace attached to $\Phi(\mathsf{s}_L b \G(\Q))$ is $\isog(b_{1,\infty}^{-1})\rho_L^{-1}L_0 = b_{1,\infty}^{-1}.L$.
But
\begin{align}\label{eq: Proof packet is in RkD 1}
b_{1,\infty}^{-1}.L= \gamma_1^{-1} h_{1,\infty}^{-1}.L = \gamma_1^{-1}.L\subset \Q^n.
\end{align}
Next, we show that $\gamma_1^{-1}.L$ has discriminant $D$. To this end, note that by an analogous argument as in \eqref{eq: Proof packet is in RkD 1} for a prime $p$ we have $b_{1,p}^{-1}.L = \gamma_1^{-1}.L$ so that
\begin{align*}
\disc_{p,Q}(L) = \disc_{p,Q}(b_{1,p}^{-1}.L) = \disc_{p,Q}(\gamma_1^{-1}.L)
\end{align*}
where we used that $b_{1,p} \in \Spin_Q(\Z_p)$ preserves the local discriminant at $p$. Thus, $\disc_Q(\gamma_1^{-1}.L) = D$ by \eqref{eq:disc loctoglob}.

It remains to show that $\Phi(\mathsf{s}_L b \G(\Q))$ corresponds to $[\gamma_1^{-1}.L,\Lambda_{\gamma_1^{-1}.L}]_{\star}$.
For this, notice first that under \eqref{eq:recapLemmaId}
\begin{align*}
\Phi(\mathsf{s}_L b \G(\Q))
= [\gamma_1^{-1}.L, \isog(b_{1,\infty}^{-1})g_L b_{2,\infty} \Z^n]_{\star}
\end{align*}
by definition of the equivalence relation. Now,
\begin{align*}
\isog(b_{1,\infty}^{-1})g_L b_{2,\infty} =\isog(\gamma_1^{-1}h_1^{-1})g_L h_2 \gamma_2 = \isog(\gamma_1^{-1})g_L \gamma_2.
\end{align*}
Quite analogously, we have $\isog(\gamma_1^{-1})g_L \gamma_2 = \isog(b_{1,p}^{-1})g_L b_{2,p}$ so that
\begin{align*}
\isog(\gamma_1^{-1})g_L \gamma_2\Z_p^n 
= \isog(b_{1,p}^{-1})g_L \Z_p^n = b_{1,p}^{-1}.(\Lambda_L\otimes \Z_p).
\end{align*}
This shows that
\begin{align*}
\isog(\gamma_1^{-1})g_L \gamma_2\Z^n = \bigcap_p (\isog(\gamma_1^{-1})g_L \gamma_2 \Z_p^n) \cap \Q^n
= \bigcap_p b_{1,p}^{-1}.(\Lambda_L\otimes \Z_p) \cap \Q^n
= \Lambda_{\gamma_1^{-1}.L}
\end{align*}
by the third property of $\Lambda_L$ in Proposition~\ref{prop:constructionLambda}.
This shows that 
\begin{align*}
\Phi(\mathsf{s}_L b \G(\Q)) = [ \gamma_1^{-1}.L,\Lambda_{\gamma_1^{-1}.L}]_{\star}
\end{align*}
and hence the proposition follows.
\end{proof}

\begin{remark}[Equivalence class induced by packets]\label{rem:equivrelation2}
Note that for any two $L,L'$ of discriminant $D$ the sets $\Pack(L),\Pack(L')$ are either equal or disjoint. Indeed, these sets are equivalence classes for an equivalence relation which is implicitly stated in the proof of Proposition~\ref{Proposition: packets are in RkD}; see also Remark~\ref{rem:equivrelation1}.
\end{remark}

We analyze the fibers of the map $\Phi$ when restricted to the piece of the homogeneous set $\mathsf{s}_L \StDiag_L(\A)\G(\Q)$ in the open set $\U$.
We set for any $L\in \Grk(\Q)$
\begin{align*}
\bStDiagcpt_L = \{h \in \StDiag_L(\A): h_1 \in \StSpin_L(\R \times \widehat{\Z})\}.
\end{align*}
We remark that $\bStDiagcpt_L$ is not equal to $\StDiag_L(\R \times \widehat{\Z})$ as $g_L$ can have denominators (cf.~\eqref{eq:defisogenystab}).

\begin{Lem}
\label{Lemma: computation of fibers of Phi}
Let $x,y \in \StDiag_L(\A)\G(\Q) \cap \U$. Then
\[ \Phi(\mathsf{s}_L x) = \Phi(\mathsf{s}_L y) 
\iff y \in \bStDiagcpt_L x.\]
\end{Lem}

\begin{proof}
We fix representatives $b^x \in \SG(\RZ)$ of $x$ and $b^y \in \SG(\RZ)$ of $y$.
Moreover, we write $b^x = h^x\gamma^x$ and $b^y = h^y \gamma^y$ with $h^x,h^y \in \StDiag_L(\A)$ and $\gamma^x,\gamma^y \in \SG(\Q)$. 
The direction ''$\Leftarrow$'' is straightforward to verify; we leave it to the reader.

Assume that $\Phi(\mathsf{s}_L x) = \Phi(\mathsf{s}_L y)$.
We recall from Proposition~\ref{Proposition: packets are in RkD} and its proof that
\begin{align*}
\Phi(\mathsf{s}_Lx) = [(\gamma_1^x)^{-1}.L,\Lambda_{(\gamma_1^x)^{-1}.L}]_{\star}
\end{align*}
and similarly for $\Phi(\mathsf{s}_L y)$.
By assumption, we have that there exists $\eta \in \Spin_Q(\Z)$ such that $\eta(\gamma_1^x)^{-1}.L = (\gamma_1^y)^{-1}.L$. Therefore, $ \gamma_1^y \eta (\gamma_1^x)^{-1} \in \StSpin_L(\Q)$ and we obtain that
\begin{align*}
\Spin_Q(\R\times\widehat{\Z}) 
\ni b_1^x \eta (b_1^y)^{-1} = h_1^x \gamma_1^x \eta (\gamma_1^y)^{-1} (h_1^y)^{-1} \in \StSpin_L(\A).
\end{align*}
The element $h = (h_1, g_L^{-1}\isog(h_1)g_L) \in \bStDiagcpt_L$ corresponding to $h_1 = b_1^x \eta (b_1^y)^{-1} \in \StSpin_L(\RZ)$ satisfies $hy = x$.
To see this, note that
\begin{align*}
hy = hb^{y}\G(\Q) 
= (b_1^x \eta (b_1^y)^{-1}b_1^y\Spin_Q(\Q), g_L^{-1}\isog(b_1^x \eta (b_1^y)^{-1})g_Lb_2^y\SPnk(\Q)).
\end{align*}
For the first component we have $b_1^x \eta (b_1^y)^{-1}b_1^y\Spin_Q(\Q) = b_1^x \Spin_Q(\Q)$ because $\eta \in \Spin_Q(\Z)$. 
For the second component, we first recall that
\begin{align*}
b_2^y = h_2^y\gamma_2^y 
= g_L^{-1}\isog(t_1^y)g_L\gamma_2^y \quad \text{and} \quad b_1^x \eta (b_1^y)^{-1} 
= h_1^x \gamma_1^x \eta (\gamma_1^y)^{-1} (t_1^y)^{-1}.
\end{align*}
We may therefore rewrite
\begin{align*}
g_L^{-1}\isog(b_1^x \eta (b_1^y)^{-1})g_Lb_2^y\SPnk(\Q) = g_L^{-1}\isog(h_1^x \gamma_1^x \eta (\gamma_1^y)^{-1})g_L\gamma_2^y\SPnk(\Q).
\end{align*}
Using that $\gamma_2^y \in \SPnk(\Q)$ and $h_2^x = g_L^{-1}\isog(h_1^x)g_L$ we obtain:
\begin{align*}
 g_L^{-1}\isog(h_1^x \gamma_1^x \eta (\gamma_1^y)^{-1})g_L\gamma_2^y\SPnk(\Q) = h_2^xg_L^{-1}\isog(\gamma_1^x\eta(\gamma_1^y)^{-1})g_L \SPnk(\Q).
\end{align*}
Finally, $g_L^{-1}\isog(\gamma_1^x\eta(\gamma_1^y)^{-1})g_L \in \SPnk(\Q)$ because $\gamma_1^x\eta(\gamma_1^y)^{-1}$ stabilizes $L$, thus
\begin{align*}
h_2^xg_L^{-1}\isog(\gamma_1^x\eta(\gamma_1^y)^{-1})g_L \SPnk(\Q) = h_2^x \SPnk(\Q) = b_2^x\SPnk(\Q).
\end{align*}
It follows that $hx =y$ and the proof is complete.
\end{proof}

\subsection{The correct weights}\label{sec:weights}

Let $\mu_L$ be the Haar probability measure on the orbit $\mathsf{s}_L \StDiag_L(\A) \SG(\Q)\subset \SG(\A)/\SG(\Q)$ and let $\mu_L|_\U$ be the normalized restriction\footnote{Note that the normalized restriction is well-defined (i.e.~$\mu_L(\U)\neq 0$) as the intersection $\mathsf{s}_L \StDiag_L(\A) \SG(\Q)\cap \U$ contains $\mathsf{s}_L (\StDiag_L(\A) \cap \SG(\R\times \widehat{\Z}))\SG(\Q)$ which is open in $\mathsf{s}_L \StDiag_L(\A) \SG(\Q)$.} to $\U$.


We compute the measure of a fiber through any point $x\in \U$ in the packet.
\begin{Lem}
\label{lem:weights1}
Let $x \in \StDiag_L(\A) \G(\Q) \cap \U$ and write $\Phi(\mathsf{s}_L x) = [\hat{L},\Lambda_{\hat{L}}]_{\star}$.
Then
\begin{align}\label{eq:measurecptorbit}
\mu_L|_\U\big(\mathsf{s}_L\bStDiagcpt_L x\big) 
= \Big(\sum_{[L',\Lambda_{L'}]_{\star} \in \Pack(L)} \frac{|\StSpin_{\hat{L}}(\Z)|}{|\StSpin_{L'}(\Z)|}\Big)^{-1}.
\end{align}
\end{Lem}

\begin{proof}
We must trace through a normalization: let $m$ be the Haar measure on $\StDiag_L(\A)$ induced by requiring that $\mu_L$ is a probability measure and let $C_1 = m(\bStDiagcpt_L)$. 
Then
\begin{align}\label{eq:weight1}
\mu_L\big(\mathsf{s}_L\bStDiagcpt_L x\big) = \frac{C_1}{|\mathrm{Stab}_{\bStDiagcpt_L}(x)|}.
\end{align}
We compute the stabilizer. Write $x = b \SG(\Q)$ for some $b \in \SG(\RZ)$ and observe
\begin{align}\label{eq:weight2}
\mathrm{Stab}_{\bStDiagcpt_L}(x)
= b \mathrm{Stab}_{\bStDiagcpt_{\hat{L}}}(\SG(\Q))b^{-1}
\end{align}
as $\hat{L} = b^{-1}_{1,\infty}.L$. The intersection $\bStDiagcpt_{\hat{L}}\cap \SG(\Q)$ consists of rational elements $g$ of $\StDiag_{\hat{L}}(\Q)$ whose first component $g_1$ is in $\Spin_Q(\RZ)$. Equivalently, it is the subgroup of $\StDiag_{\hat{L}}(\Q)$ of elements $g$ with $g_1 \in \Spin_Q(\Z)$ which is clearly isomorphic to $\StSpin_{\hat{L}}(\Z)$.
In particular,
\begin{align*}
|\mathrm{Stab}_{\bStDiagcpt_L}(x)| = |\StSpin_{\hat{L}}(\Z)|.
\end{align*}

We now use that the one-to-one correspondence between $\Pack(L)$ and $\bStDiagcpt_L$-orbits in $\StDiag_L(\A) \SG(\Q) \cap \U$ (Lemma~\ref{Lemma: computation of fibers of Phi}). By summing \eqref{eq:measurecptorbit} over all such orbits we obtain
\begin{align*}
\mu_L(\U) = \sum_{[L',\Lambda_{L'}]_{\star} \in \Pack(L)} \frac{C_1}{|\StSpin_{L'}(\Z)|}
\end{align*}
which determines $C_1$. This concludes the lemma as by \eqref{eq:weight1} and \eqref{eq:weight2}
\begin{align*}
\mu_L|_\U\big(\mathsf{s}_L\bStDiagcpt_L x\big) = C_1 \mu_L(\U)^{-1} |\StSpin_{\hat{L}}(\Z)|^{-1}.
\end{align*}
\end{proof}

\subsubsection{Measures on $\lquot{\Spin_Q(\Z)}{\RkD}$}\label{sec:measures}
We have different measures on the set of cosets $\lquot{\Spin_Q(\Z)}{\RkD}$:
\begin{itemize}
\item $\nu_D$ is the pushforward of the normalized sum of Dirac measures on $\RkD$.
\item For any $L\subset \Q^n$ oriented $k$-dimensional with $\disc_Q(L) = D$ the measure $\nu_{\Pack(L)}$ is the pushforward of $\mu_L|_{\U}$ under the map~$\Phi$ defined in \eqref{Equation: map Phi in introduction}.
Here, the collection $\Pack(L)$ is defined in \eqref{eq:defP(L)}.
\end{itemize}
We claim that $\nu_D$ is a convex combination of the measures $\nu_{\Pack(L)}$ for $L$ varying with discriminant $D$.
The weights of the above measures may be computed explicitly.
Beginning with the former, note that the mass $\nu_D$ gives to a point $[\hat{L},\Lambda_{\hat{L}}]_{\star} \in \lquot{\Spin_Q(\Z)}{\RkD}$ is, up to a fixed scalar multiple, the number of preimages of $[\hat{L},\Lambda_{\hat{L}}]_{\star}$ under the quotient map $\RkD \rightarrow \lquot{\Spin_Q(\Z)}{\RkD}$. In other words, it is a constant times
\begin{align*}
\#\{[g.\hat{L},\Lambda_{g.\hat{L}}]: g \in \Spin_Q(\Z)\}
= \#\{g.\hat{L} : g \in \Spin_Q(\Z)\} 
= \frac{|\Spin_Q(\Z)|}{|\StSpin_{\hat{L}}(\Z)|}.
\end{align*}
By the same argument as in Lemma~\ref{lem:weights1}, we have (as $|\Spin_Q(\Z)|$ cancels out)
\begin{align}\label{eq:measureonquotofR(D)}
\nu_D\big([\hat{L},\Lambda_{\hat{L}}]_{\star}\big)  =  \Big(\sum_{[L',\Lambda_{L'}]_{\star} \in \lquot{\Spin_Q(\Z)}{\RkD}} \frac{1}{|\StSpin_{L'}(\Z)|}\Big)^{-1}\frac{1}{|\StSpin_{\hat{L}}(\Z)|}
\end{align}

On the other hand, the measure $\nu_{\Pack(L)}$ satisfies for any $[\hat{L},\Lambda_{\hat{L}}]_{\star} \in \Pack(L)$
\begin{align}\label{eq:measure on pushedpacket}
\nu_{\Pack(L)}\big([\hat{L},\Lambda_{\hat{L}}]_{\star}\big) 
= \Big(\sum_{[L',\Lambda_{L'}]_{\star} \in \Pack(L)} \frac{1}{|\StSpin_{L'}(\Z)|}\Big)^{-1}\frac{1}{|\StSpin_{\hat{L}}(\Z)|}
\end{align}
by Lemma~\ref{lem:weights1}.

Thus, the relative weights the measures $\nu_D$ and $\nu_{\Pack(L)}$ assign agree.
It follows from Remark~\ref{rem:equivrelation2} and \eqref{eq:measure on pushedpacket} and  \eqref{eq:measureonquotofR(D)} that $\nu_D$ is a convex combination of the measures $\nu_{\Pack(L)}$ as claimed.

\subsection{Conclusion}\label{sec:conclusion}

We now prove the remaining theorems.
We proved in \S \ref{sec:refinement moduli} that Theorem~\ref{thm:equidistrmoduli-better} implies Theorem~\ref{thm:jointwithshapes} when $k>2$ and Theorem~\ref{thm:equidistrmoduli}. So it is left to prove Theorem~\ref{thm:equidistrmoduli-better} and Theorem~\ref{thm:jointwithshapes} when $k=2$. 

\begin{proof}[Proof of Theorem~\ref{thm:equidistrmoduli-better}]
The key insight is that $\nu_{D_i}$ is a convex combination of measures that are equidistributed along any sequence of admissible subspaces. The assumption of $D_i$ to be $k$-power free implies admissibility.

Let $p$ be an odd prime not dividing $\disc(Q)$ and let $D_i \to \infty$ be a sequence of integers as in the assumptions of the theorem for the prime $p$.
We first claim that any sequence $L_i \in \mathcal{H}_{Q}^{n,k}(D_i)$ is admissible (cf.~\S\ref{sec:dimatleast4}).
Observe first that Condition \eqref{item: c1 admissibility} is automatic.
Also, the assumption $\kfree{D_i}{k}\to \infty$ implies Condition \eqref{item: c2 admissibility}. By Proposition~\ref{prop:disccomparison} and $n-k \geq k$ we have
\begin{align*}
\kfree{\disc(L_i^\perp)}{n-k} \geq \kfree{\disc(L_i^\perp)}{k} \asymp_Q \kfree{D_i}{k}
\end{align*}
proving Condition \eqref{item: c3 admissibility}.
Lastly, Condition~\eqref{item: c4 admissibility} follows from Propositions~\ref{prop:disccomparison} and \ref{Proposition: Strong Isotropicity criterion for HL } (where the former implies $p \nmid \disc_Q(L^\perp)$).

For any sequence $L_i$ as above together with an additional given orientation the measures $\nu_{\Pack(L_i)}$ equidistribute to the Haar measure on $\lquot{\Spin_Q(\Z)}{\Gridsor}$. Indeed, by admissibility the measures $\mu_{L_i}$ converge to the Haar measure $\mu$ on $\rquot{\SG(\A)}{\SG(\Q)}$ by Theorem~\ref{thm:dyn-simplyconnected}.
In particular, as $\U$ is compact open we have $\mu_{L_i}|_\U \to \mu|_\U$.
Taking the pushforward under $\Phi$ yields $\nu_{\Pack(L_i)} \to \nu$ where $\nu$ is the Haar measure on $\lquot{\Spin_Q(\Z)}{\Gridsor}$.

The fact that $\nu_{D_i}$ is a convex combination of the measures $\nu_{\Pack(L_i)}$ finally implies Theorem~\ref{thm:equidistrmoduli-better}. 
\end{proof}

\begin{proof}[Proof of Theorem~\ref{thm:jointwithshapes} for $k=2$]
Let $\bar{\U}$ be the principal genus of $\rquot{\bSG(\A)}{\bSG(\Q)}$. The following diagram commutes by construction:
\begin{center}
\begin{tikzcd}
    \rquot{\SG(\A)}{\SG(\Q)}\supset \U \arrow[twoheadrightarrow]{r}{} \arrow{d}{} & \bar{\U} \subset \rquot{\bSG(\A)}{\bSG(\Q)} \arrow{d}{} \\
   \lquot{\Spin_Q(\Z)}{\Gridsor} \arrow[twoheadrightarrow]{r}  & \lquot{\Spin_Q(\Z)}{\Gr{2}(\R)}\times \shapesp{2}\times \shapesp{n-2}.
  \end{tikzcd}
\end{center}
By Theorem~\ref{thm:dyn dim2},
the images of $\mathsf{s}_{L_i} \StDiag_{L_i}(\A)\SG(\Q)\cap \U$ in $\bar{\U}$ along any admissible sequence of subspaces $L_i$ are equidistributed.
On the other hand, by the above commutative diagram these images are given by the images of $\Pack(L_i)$ under the bottom map.
The rest of the argument is analogous to the case $k>2$.
\end{proof}

\newpage
\begin{appendix}
\section{Non-emptiness for the sum of squares}\label{sec:appendix}

In this section, we discuss non-emptiness conditions for the set $\HkQD$ when $Q$ is the sum of squares.
To simplify notation, we write $\mathcal{H}^{n,k}(D)$.
Note that we have a bijection
\begin{align*}
L \in \mathcal{H}^{n,k}(D) \mapsto L^\perp \in \mathcal{H}^{n,n-k}(D)
\end{align*}
as $Q$ is unimodular (see Proposition~\ref{Prop:Isomorphism between the quotients of piL(Rn) and piL perp (Rn)} and its corollary).
In view of our goal, we will thus assume that $k \leq n-k$ throughout.
We will also suppose that $n-k \geq 2$.

The question of when $\mathcal{H}^{n,k}(D)$ is non-empty is a very classical problem in number theory, particularly if $k=1$.
Here, note that $\mathcal{H}^{n,1}(D)$ is non-empty if and only if there exists a primitive vector $v \in \Z^n$ with $Q(v) =D$ (i.e.~$D$ is primitively represented as a sum of $n$ squares).
\begin{itemize}
\item For $n=3$, Legendre proved, assuming the existence of infinitely many primes in arithmetic progression, that $\mathcal{H}^{3,1}(D)$ is non-empty if and only if $D \not \equiv 0,4,7 \mod 8$.
A complete proof was later given by Gauss \cite{gauss}; we shall nevertheless refer to this result as Legendre's three squares theorem.
\item For $n=4$, Lagrange's four squares theorem states that $\mathcal{H}^{4,1}(D)$ is non-empty if and only if $D \not\equiv 0 \mod 8$.
\item For $n\geq 5$, we have $\mathcal{H}^{5,1}(D) \neq \emptyset$ for all $D \in \N$ as one can see from Lagrange's four square theorem. Indeed, if $D \not\equiv 0 \mod 8$ the integer $D$ is primitively represented as a sum of four squares and hence also of $n$ squares (by adding zeros). If $D \equiv0 \mod 8$ one can primitively represent $D-1$ as sum of four squares which yields a primitive representation of $D$ as sum of five squares.
\end{itemize}
When $k=2$, this question has been studied by Mordell \cite{mordell1,mordell2} and Ko \cite{ko}. In \cite{2in4}, the first and last named authors have shown together with Einsiedler that
\begin{align}\label{eq:non-emptyfrom2in4}
\mathcal{H}^{4,2}(D) \neq \emptyset \iff D \not\equiv 0,7,12,15 \mod 16.
\end{align}
This concludes all cases with $n \in \{3,4\}$. In this appendix we show by completely elementary methods the following.

\begin{Prop}\label{prop:non-empty}
Suppose that $n \geq 5$. Then $\mathcal{H}^{n,k}(D)$ is non-empty.
\end{Prop}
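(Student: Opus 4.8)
The plan is to reduce the problem to primitive representations of \emph{single} integers as sums of squares and then to glue such representations together in mutually orthogonal blocks of coordinates. Since $k \leq n-k$ and $n \geq 5$, I would distinguish cases on $k$. The base case $k=1$ is exactly the classical three/four/five squares discussion recalled just above, so $\mathcal{H}^{n,1}(D) \neq \emptyset$ for all $D$ and all $n \geq 5$. For $k=2$, I would use the following idea: write $n = n_1 + n_2$ with $n_1, n_2 \geq 3$ (possible since $n \geq 6$; the case $n=5$ with $k=2$ needs $n_1=2,n_2=3$ and must be handled separately, see below), choose primitive vectors $v_1 \in \Z^{n_1}$ and $v_2 \in \Z^{n_2}$ with $Q_0(v_1) = a$, $Q_0(v_2) = b$ for suitable $a,b$, and consider the subspace $L = \spn\{(v_1,0),(0,v_2)\}$. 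Its integer lattice is spanned by these two orthogonal primitive vectors (primitivity of each $v_i$ in its block forces primitivity of the span in $\Z^n$, since the two blocks of coordinates are disjoint), so $q_L = \diag(a,b)$ and $\disc_Q(L) = ab$. Thus it suffices to factor $D = ab$ so that $a$ is primitively represented as a sum of $n_1$ squares and $b$ as a sum of $n_2$ squares; taking $n_1, n_2 \geq 5$ whenever possible (i.e. $n \geq 10$) makes this trivial by the five-squares statement, and for $6 \leq n \leq 9$ one picks $n_1 = 5$ (so $a = D$ works if $n_2 \leq 4$ can represent $b$... ) — more cleanly, one takes $a = 1$ (represented by one square, hence by any $n_1 \geq 1$ squares) and $b = D$, requiring $D$ primitively represented as a sum of $n_2 = n-1 \geq 5$ squares, which holds. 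So for $k=2$, $n \geq 6$: done. For general $k$ with $3 \leq k \leq n-k$, the same device works: write $L$ as an orthogonal direct sum of $k$ lines $\Q(0,\dots,v_j,\dots,0)$, putting the $j$-th generator in its own block of coordinates; with $k-1$ of the blocks being a single coordinate (generator $e$, value $1$) and the last block having $n-(k-1) \geq n-k+1 \geq k+1 \geq 4$... one needs the last block to have at least $5$ coordinates to invoke the unconditional five-squares theorem, i.e. $n - k + 1 \geq 5$, i.e. $n \geq k+4$; combined with $k \leq n-k$ this is automatic once $n \geq 5$ except for small cases like $(k,n) = (2,5)$ and $(k,n)=(2,4)$-adjacent configurations.

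Concretely I would organize the write-up as: (1) recall the classical $k=1$ facts (already in the excerpt); (2) state the gluing lemma — if $\Z^{n_1}$ primitively represents $a$ and $\Z^{n_2}$ primitively represents $b$ and $n_1 + n_2 = n$, then $\mathcal{H}^{n, 2}(ab) \neq \emptyset$, and more generally an orthogonal-block construction producing $q_L = \diag(c_1,\dots,c_k)$ whenever $c_j$ is primitively represented in a block of size $m_j$ with $\sum m_j = n$; (3) for each $D$, exhibit the factorization. The cleanest uniform choice: use block sizes $m_1 = \dots = m_{k-1} = 1$ and $m_k = n-k+1$, with $c_1 = \dots = c_{k-1} = 1$ and $c_k = D$. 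This gives $\disc_Q(L) = D$ provided $D$ is primitively represented as a sum of $n-k+1$ squares. Since $k \leq n-k$, we have $n - k + 1 \geq k+1 \geq 3$; and since additionally $n \geq 5$ and $k \leq \lfloor n/2 \rfloor$, one checks $n-k+1 \geq 5$ in all cases \emph{except} $(n,k) \in \{(5,2)\}$ (where $n-k+1 = 4$) — wait, also $k$ could be larger; but $k \leq n-k$ forces $k \leq 2$ when $n=5$, and $k=1$ is the base case, so the only genuinely new small case is $(n,k) = (5,2)$, $n-k+1 = 4$. There, $D$ need not be primitively represented by four squares (fails iff $D \equiv 0 \bmod 8$), so I would handle $(5,2)$ separately: if $D \not\equiv 0 \bmod 8$, the four-squares construction works; if $D \equiv 0 \bmod 8$, use block sizes $(2,3)$ with $c_1 = 2$, $c_2 = D/2$ — since $D/2 \equiv 0 \bmod 4$ one needs $D/2$ primitively represented as a sum of three squares, which fails iff $D/2 \equiv 0,4,7 \bmod 8$; since $D \equiv 0 \bmod 8$ we get $D/2 \equiv 0$ or $4 \bmod 8$, still bad. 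So instead take $c_1 = a$ a sum-of-two-squares with $a \nmid$ problems and $c_2 = D/a$ a sum-of-three-squares; by Legendre, for $D \equiv 0 \bmod 8$ one can choose, e.g., $a$ an odd divisor-friendly value or multiply: write $D = 2^s D'$ with $D'$ odd, $s \geq 3$; pick $c_1 = 2^{s-1}\cdot(\text{sum of two squares})$... The robust fix: by Gauss, every integer $\not\equiv 0,4,7 \bmod 8$ is a sum of three squares, and every integer $m$ with $4 \nmid m$ that is $\equiv 1,2,3,5,6 \bmod 8$ is \emph{primitively} so; since we may also use the two-square block to absorb powers of $2$ (note $2 = 1^2+1^2$, $2^2 = 0 + 2^2$ is imprimitive, but $5 = 1^2 + 2^2$, etc.), I would show that for any $D$ one can factor $D = c_1 c_2$ with $c_1$ primitively a sum of two squares and $c_2$ primitively a sum of three squares, with the two primitive vectors placed in disjoint coordinate blocks so the resulting rank-$2$ lattice is automatically primitive in $\Z^5$.

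The main obstacle, and the only place requiring care, is exactly this case $(n,k) = (2,5)$ (equivalently $(3,5)$): ensuring the existence of a factorization $D = c_1 c_2$ with $c_1$ \emph{primitively} represented by $2$ squares and $c_2$ \emph{primitively} represented by $3$ squares. I expect this to follow from the congruence classification of primitive representations: $c_1$ is primitively a sum of two squares iff $c_1$ is not divisible by $4$ and has no prime factor $\equiv 3 \bmod 4$ to an odd power — in particular any $c_1 \equiv 1 \bmod 4$ that is squarefree and a product of primes $\equiv 1 \bmod 4$ works, and crucially $c_1 = 1$ and $c_1 = 2$ both work; $c_2$ is primitively a sum of three squares iff $c_2 \not\equiv 0,4,7 \bmod 8$ and $c_2$ is not divisible by $4$. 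Writing $D = 2^s t$ with $t$ odd: if $s = 0$, take $c_1 = 1, c_2 = D$ when $D \not\equiv 7 \bmod 8$, and $c_1 = 2, c_2 = D/2$... but $D$ odd means $2 \nmid D$, so instead when $D \equiv 7 \bmod 8$ use $c_1 = 5$ (if $5 \mid D$) or adjust — here one may need to split off an explicit prime $\equiv 1 \bmod 4$ dividing $D$ or note $D$ itself, being $\equiv 7$, needs a different route; if $s \geq 1$, put the factor $2$ (if $s=1$) or $2^2$ absorbed as $c_1 = 2 \cdot(\text{unit})$ appropriately. I would not grind through all residues in the proposal; the point is that the two-squares block gives enough flexibility (values $1, 2, 5, 10, 13, \dots$) to fix the residue of the complementary factor mod $8$, and a short finite case check over $D \bmod 8$ (together with the trivial observation that one can always multiply $c_1$ by a prime $p \equiv 1 \bmod 4$ not dividing the rest, or by $2$) closes it. For all $n \geq 6$ the proof is immediate from the single clean construction with blocks $(1,\dots,1,n-k+1)$ and $n - k + 1 \geq 5$, invoking only the unconditional five-squares theorem, so no number-theoretic subtlety arises there.
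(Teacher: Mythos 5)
Your reduction machinery is fine: putting unit vectors in extra coordinates is exactly the embedding $\mathcal{H}^{n,k}(D)\hookrightarrow\mathcal{H}^{n+1,k+1}(D)$ used in the paper, the disjoint-block lattice is indeed $\Z(v_1,0)\oplus\Z(0,v_2)$ with $q_L=\diag(c_1,c_2)$ when each $v_i$ is primitive in its block, and the easy residues of $D$ are disposed of just as in the paper (Lagrange/five squares). But the one case that actually requires a new idea — $(n,k)=(5,2)$ with $D\equiv 0\bmod 8$ — is precisely where your plan provably cannot work. A primitive representation $c_1=x^2+y^2$, $\gcd(x,y)=1$, forces $4\nmid c_1$, and a primitive representation of $c_2$ by three squares forces $c_2\not\equiv 0,4,7\bmod 8$, so in particular $4\nmid c_2$. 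Hence any factorization $D=c_1c_2$ of the kind you need has $v_2(D)=v_2(c_1)+v_2(c_2)\le 2$, and no finite case check over residues can produce one when $8\mid D$. The same $2$-adic obstruction kills every disjoint-block partition of $5$ (the $(1,4)$ blocks need $D$ primitively a sum of four squares, which also fails exactly when $8\mid D$), so for $8\mid D$ no subspace of $\Q^5$ built from vectors supported on disjoint coordinate blocks has discriminant $D$; the "flexibility of the two-square block" you invoke cannot absorb more than a single factor of $2$. A second, smaller slip: $(n,k)=(6,3)$ also has $n-k+1=4$, so it is not covered by your "clean" $(1,\dots,1,n-k+1)$ construction either (and the same valuation argument rules out all block partitions of $6$ into three parts once $16\mid D$); it is only rescued by first settling $(5,2)$ and then adding a unit coordinate.

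What is missing is a construction whose rank-$2$ lattice is \emph{not} an orthogonal sum supported on disjoint blocks. The paper does this via Schmidt's extension procedure (Proposition~\ref{prop:Schmidtinductiveprocedure}): start from the line $L'=\Q(1,-1,0,0)$ of discriminant $2$, note $\pi_{L'^{\perp}}$ of $\Z^4$ is $\Z\frac{e_1+e_2}{2}+\Z e_3+\Z e_4$, and extend by a vector $(u,h)\in\Z^5$; the discriminant becomes $2\bigl(h^2+\tfrac{a^2}{2}+b^2+c^2\bigr)$, so one only needs a primitive representation of $\tfrac{D}{2}$ by $h^2+\tfrac{a^2}{2}+b^2+c^2$, which is obtained by setting $a=2$ and applying Legendre's three-squares theorem to $\tfrac{D}{2}-2\equiv 6\bmod 8$ (after the prior reductions leave only $D\equiv 0\bmod 16$). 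The key point is that the half-integral cross terms coming from a sublattice of discriminant $2$ (equivalently, the quaternary form $x_1^2+2x_2^2+2x_3^2$, or in the $(4,2)$ case Dickson's primitive representations by $x_1^2+2x_2^2+2x_3^2$) let the discriminant pick up the high powers of $2$ that diagonal block forms cannot reach. Without an ingredient of this kind your argument does not close, so as it stands the proposal has a genuine gap at its central case.
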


We first claim that it suffices to show that $\mathcal{H}^{5,2}(D)$ is non-empty. For this, observe that there exists for any $(n,k)$ injective maps
\begin{align}\label{eq:(5,2) enough}
\mathcal{H}^{n,k}(D) \hookrightarrow \mathcal{H}^{n+1,k}(D),\quad
\mathcal{H}^{n,k}(D) \hookrightarrow \mathcal{H}^{n+1,k+1}(D).
\end{align}
The first map is given by viewing $L \in \mathcal{H}^{n,k}(D)$ as a subspace of $\Q^{n+1}$ via $\Q^n \to \Q^{n}\times \{0\} \subset \Q^{n+1}$.
The second map associates to $L = \Q v_1 \oplus \ldots \Q v_k \in \mathcal{H}^{n,k}(D)$ the subspace $\Q (v_1,0) \oplus \ldots \oplus \Q (v_k,0) \oplus \Q e_{n+1} \in \mathcal{H}^{n+1,k+1}(D)$.
In particular, Proposition~\ref{prop:non-empty} for $(n,k) = (5,2)$ implies Proposition~\ref{prop:non-empty} for $(n,k) = (6,2),(6,3)$. One then proceeds inductively to verify the claim.

\subsection{A construction of Schmidt}
Though it is not, strictly speaking, necessary, we introduce here a conceptual construction of Schmidt \cite{Schmidt-count} that captures what can be done with inductive arguments as in~\eqref{eq:(5,2) enough}.
As before, we identify $\Q^n$ with a subspace of $\Q^{n+1}$ via $\Q^n \simeq \Q^{n}\times \{0\}$.
Given any $L \in \mathrm{Gr}_{n+1,k}(\Q)$ we have that either the intersection $L \cap \Q^{n}$ is $(k-1)$-dimensional or $L$ is contained in $\Q^n$.
In particular, we can write
\begin{align*}
\mathcal{H}^{n+1,k}(D) = \mathcal{H}^{n,k}(D) \sqcup \mathcal{H}_{\mathrm{nd}}^{n+1,k}(D)
\end{align*}
where $\mathcal{H}_{\mathrm{nd}}^{n+1,k}(D)$ denotes the subspaces $L\in \mathcal{H}^{n+1,k}(D)$ for which $L \not\subset \Q^n$.
We also let $\mathrm{Gr}_{n+1,k}^{\mathrm{nd}}(\Q)$ be the subspaces $L\in \mathrm{Gr}_{n+1,k}(\Q)$ for which $L \not\subset \Q^n$.
Here, '$\mathrm{nd}$' stands for 'non-degenerate'.

We now associate to $L \in \mathrm{Gr}_{n+1,k}^{\mathrm{nd}}(\Q)$ three quantities. Let $L' = L \cap \Q^n$. Furthermore, note that the projection of $L(\Z)$ onto the $x_{n+1}$-axis consists of multiples of some vector $(0,\ldots,0,h_L)$ where $h_L \in \N$.
As $(0,\ldots,0,h_L)$ comes from projection of $L(\Z)$, there exists some vector $(u_L,h_L) \in L(\Z)$. We define $v_L$ to be the projection of $u_L$ onto the orthogonal complement of $L'$ inside $\Q^n$.

\begin{Prop}[{\cite[\S5]{Schmidt-count}}]
\label{prop:Schmidtinductiveprocedure}
The following properties hold:
\begin{enumerate}[(i)]
\item For any $L \in \mathrm{Gr}_{n+1,k}^{\mathrm{nd}}(\Q)$ the pair $(h_L,v_L)$ is relatively prime in the following sense: there is no integer $d >1$ such that $d^{-1}h_L \in \N$ and $d^{-1}v_L \in \pi_{L'^{\perp}}(\Z^{n-1})$. 
\item
Let $(h,\bar{L},v)$ be any triplet with $h \in \N$, $\bar{L} \in \Gr{k-1}(\Q)$ and $v \in \pi_{\bar{L}}(\Z^{n-1})$ such that $(h_L,v_L)$ is relatively prime. 
Then there exists a unique $L \in \mathrm{Gr}_{n+1,k}^{\mathrm{nd}}(\Q)$ with $(h,\bar{L},v) = (h_L,L',v_L)$.
\item
We have
\[ \disc(L) = \disc(L')(h_L^2 + Q(v_L)). \]
\end{enumerate}
\end{Prop}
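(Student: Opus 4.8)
\textbf{Plan of proof for Proposition~\ref{prop:Schmidtinductiveprocedure}.}
The three assertions describe the bijection between $\mathrm{Gr}_{n+1,k}^{\mathrm{nd}}(\Q)$ and triplets $(h,\bar{L},v)$, together with the discriminant formula. The plan is to set up careful bases adapted to the flag $L' = L \cap \Q^n \subset L$ and to track the integer lattices $L(\Z)$, $L'(\Z)$ explicitly.

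First I would fix $L \in \mathrm{Gr}_{n+1,k}^{\mathrm{nd}}(\Q)$ and build a good basis of $L(\Z)$. Since $L' = L \cap \Q^n$ is a hyperplane in $L$ and $L(\Z) \cap \Q^n = L'(\Z)$, the quotient $L(\Z)/L'(\Z)$ is free of rank one; its image under projection to the $x_{n+1}$-axis is exactly $\Z \cdot (0,\ldots,0,h_L)$ by definition of $h_L$, and this projection is injective on $L(\Z)/L'(\Z)$ (if a vector of $L(\Z)$ has vanishing last coordinate it lies in $L'(\Z)$). Hence we may choose a basis $w_1,\ldots,w_{k-1}$ of $L'(\Z)$ and complete it by $w_k = (u_L,h_L) \in L(\Z)$ so that $w_1,\ldots,w_k$ is a $\Z$-basis of $L(\Z)$. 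For (i), suppose $d>1$ divides both $h_L$ and $v_L$ (in the stated sense). Write $u_L = u_L' + v_L$ with $u_L' \in L' \otimes \R$ and $v_L \in \pi_{L'^\perp}(\Z^{n-1})$; since $u_L' \in \spn_\Q(w_1,\ldots,w_{k-1})$ and $\langle u_L', w_j\rangle$ is an integer combination determined by $L'(\Z)$, one checks that $d^{-1}w_k = (d^{-1}u_L, d^{-1}h_L)$ can be adjusted by an integer combination of $w_1,\ldots,w_{k-1}$ to land in $L(\Z)$, contradicting primitivity of $w_k$ modulo $L'(\Z)$. The bookkeeping here — that $d \mid v_L$ forces $d \mid u_L$ up to $L'(\Z)$ — is the one genuinely fiddly point and I expect it to be the main obstacle; it amounts to showing that the "$L'$-component" $u_L'$ of $u_L$ is automatically divisible once the orthogonal component is, which uses that $v_L$ being integral in $\pi_{L'^\perp}(\Z^{n-1})$ pins down the projection of an actual lattice vector.

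For (ii), given $(h,\bar{L},v)$ with the relative primality condition, I would run the construction in reverse: pick a $\Z$-basis $w_1,\ldots,w_{k-1}$ of $\bar{L}(\Z)$, choose any $u \in \Z^{n-1}$ with $\pi_{\bar{L}^\perp}(u) = v$ (possible since $v \in \pi_{\bar L^\perp}(\Z^{n-1})$), and set $L = \spn_\Q(w_1,\ldots,w_{k-1},(u,h)) \in \mathrm{Gr}_{n+1,k}^{\mathrm{nd}}(\Q)$. Then $L \cap \Q^n \supseteq \bar L$, and equality plus $L(\Z) = \Z w_1 + \cdots + \Z w_{k-1} + \Z(u,h)$ follow from the relative primality hypothesis by the same argument as in (i) run backwards; changing the representative $u$ by an element of $\bar{L}(\Z) \oplus (\text{something in }\Z^{n-1})$ changes $L$ only by a lattice automorphism, giving well-definedness, and one reads off $(h_L, L', v_L) = (h, \bar L, v)$. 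Uniqueness is immediate since $L$ is spanned by $\bar L$ and one vector whose last coordinate and orthogonal-to-$\bar L$ component are prescribed, the remaining freedom being absorbed into $\bar L(\Z)$.

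For (iii), I would compute $\disc(L) = \disc(Q|_{L(\Z)})$ in the basis $w_1,\ldots,w_{k-1},w_k$ with $w_k=(u_L,h_L)$. The Gram matrix is block-form: the top-left $(k-1)\times(k-1)$ block is the Gram matrix of $L'(\Z)$, and expanding the determinant along the last row/column, the contribution is $\disc(L')$ times the squared distance of $w_k$ from $\spn_\R(w_1,\ldots,w_{k-1}) = L' \otimes \R$ inside $\R^{n+1}$. That squared distance is $Q(w_k) - \|\pi_{L'}(u_L)\|^2 = h_L^2 + \|u_L\|^2 - \|\pi_{L'}(u_L)\|^2 = h_L^2 + \|\pi_{L'^\perp}(u_L)\|^2 = h_L^2 + Q(v_L)$, using orthogonality of the decomposition $u_L = \pi_{L'}(u_L) + v_L$ and that $w_k$ is orthogonal to $e_{n+1}$ precisely in its first $n$ coordinates. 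This gives $\disc(L) = \disc(L')(h_L^2 + Q(v_L))$ as claimed, completing the proof.
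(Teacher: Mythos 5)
A preliminary remark: the paper does not prove this proposition at all — it is quoted from Schmidt \cite{Schmidt-count} with only the remark that the construction in (ii) is explicit — so your plan can only be judged on its own terms. Your route (adapted $\Z$-basis $w_1,\dots,w_{k-1},w_k=(u_L,h_L)$ of $L(\Z)$, reconstruction of $L$ as the span of $\bar{L}$ and $(u,h)$, and the base-times-height Gram determinant computation for (iii)) is the natural one; part (iii) and the uniqueness part of (ii) are fine as sketched.

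The step you yourself flag in (i) is, however, wrong as written, and the existence half of (ii) (which you dispatch as ``the same argument run backwards'') leans on exactly the same point. From $d^{-1}v_L\in\pi_{L'^{\perp}}(\Z^n)$ you can conclude neither that the $L'$-component $u_L'$ of $u_L$ is divisible by $d$, nor that $d^{-1}w_k$ can be corrected by an \emph{integer} combination of $w_1,\dots,w_{k-1}$ (i.e.\ by an element of $L'(\Z)$) into a lattice vector; in general the required correction only lies in $L'(\Q)$, and no divisibility of $u_L'$ is true or needed. The repair is short: since $d^{-1}v_L\in\pi_{L'^{\perp}}(\Z^n)$, pick $z\in\Z^n$ with $\pi_{L'^{\perp}}(z)=d^{-1}v_L$. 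Then $d^{-1}u_L-z$ has trivial $L'^{\perp}$-component, so it lies in $L'$, whence $(z,\,h_L/d)=d^{-1}w_k-(d^{-1}u_L-z,\,0)\in L\cap\Z^{n+1}=L(\Z)$. This vector has last coordinate $h_L/d$ with $0<h_L/d<h_L$, contradicting the fact that the projection of $L(\Z)$ to the last axis is $h_L\Z$ (equivalently, expanding it in your basis gives the non-integral coefficient $1/d$ on $w_k$). For the existence part of (ii) the same mechanism, stated positively, is what you need: if $y=x+t(u,h)\in L(\Z)$ with $x\in\bar{L}$, $t\in\Q$, then $x+tu\in\Z^n$ gives $tv\in\pi_{\bar{L}^{\perp}}(\Z^n)$ and $th\in\Z$; the set of such $t$ is a fractional ideal $\tfrac1e\Z\supseteq\Z$, and relative primality of $(h,v)$ forces $e=1$, so $t\in\Z$ and $x\in\bar{L}(\Z)$. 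Hence $L(\Z)=\bar{L}(\Z)+\Z(u,h)$ and $(h_L,L',v_L)=(h,\bar{L},v)$.

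Two smaller points: in (ii), two integral preimages $u,u'$ of $v$ differ by $u-u'\in\bar{L}\cap\Z^n=\bar{L}(\Z)$, so $L$ (indeed $L(\Z)$) is literally unchanged — no ``lattice automorphism'' is involved; and the lattice in the primality condition should be $\pi_{L'^{\perp}}(\Z^{n})$ (the statement's $\Z^{n-1}$ is an off-by-one slip of the paper which you inherited, harmless but worth correcting in a write-up).
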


We remark that the construction in (ii) is quite explicit: If $u \in\Z^{n-1}$ satisfies $\pi_{\bar{L}}(u) = v$, one defines $L$ to be the span of $\bar{L}$ and the vector $(u,h)$. 

To illustrate this construction we show the direction in \eqref{eq:non-emptyfrom2in4} that we will need for Proposition~\ref{prop:non-empty}.

\begin{Lem}\label{lem:(4,2)non-empty}
If $D \in \N$ satisfies $D \not\equiv 0,7,12,15 \mod 16$, then $\mathcal{H}^{4,2}(D)$ is non-empty.
\end{Lem}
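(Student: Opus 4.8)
The goal is to show that whenever $D \not\equiv 0,7,12,15 \bmod 16$ there is a two-dimensional rational subspace $L \subset \Q^4$ with $\disc_{Q_0}(L) = D$. The natural approach is to use Schmidt's inductive construction (Proposition~\ref{prop:Schmidtinductiveprocedure}) with $n+1 = 4$, $k = 2$, so that $\bar L = L' = L \cap \Q^3$ is a rational line in $\Q^3$. By part (iii) of that proposition,
\begin{align*}
\disc(L) = \disc(L')\,(h_L^2 + Q_0(v_L)),
\end{align*}
where $h_L \in \N$, $v_L = \pi_{L'^\perp}(u_L)$ for some integral vector, and $(h_L, v_L)$ is relatively prime in the sense of (i). So it suffices to realize $D$ as a product $\disc(L') \cdot (h^2 + Q_0(v))$ with the relative primality condition satisfied and with $\disc(L')$ itself a primitively represented value — i.e.\ $L' \in \mathcal{H}^{3,1}(\disc(L'))$, which by Legendre's three squares theorem means $\disc(L') \not\equiv 0,4,7 \bmod 8$.

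The cleanest route is to try $\disc(L') = 1$, i.e.\ take $L'$ to be the line spanned by a primitive vector of norm $1$ in $\Q^3$ (for instance $\Q e_1$). Then $L'^\perp \cap \Q^3 = \spn\{e_2, e_3\}$, and $v_L$ ranges over all of $\Z^2$ (no projection is lost since $u_L \in \Z^3$ projects freely onto the $e_2,e_3$-coordinates). Thus I would need to write $D = h^2 + a^2 + b^2$ with $(h,a,b)$ primitive, i.e.\ $\gcd(h,a,b) = 1$. But $D$ being a primitive sum of three squares is governed again by Legendre: $D \not\equiv 0,4,7 \bmod 8$. This covers most residues; the remaining cases $D \equiv 0 \bmod 4$, $D \equiv 7 \bmod 8$ (with $D \not\equiv 7,15 \bmod 16$, so $D \equiv 7 \bmod 16$ only — wait, $7 \bmod 8$ splits as $7$ or $15 \bmod 16$, and $15$ is excluded, so $D \equiv 7 \bmod 16$ survives) need to be handled by choosing $\disc(L') > 1$.

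So the key step is a short case analysis on $D \bmod 16$. For the residues not covered by $\disc(L') = 1$, I would instead take $\disc(L') = 2$ or $\disc(L') = 3$ — these are legal since $2, 3 \not\equiv 0,4,7 \bmod 8$, so a primitive vector of norm $2$ (e.g.\ $e_1 + e_2$) or norm $3$ (e.g.\ $e_1+e_2+e_3$) exists in $\Q^3$. Writing $D = 2(h^2 + Q_0(v))$ or $D = 3(h^2 + Q_0(v))$ reduces to representing $D/2$ or $D/3$ appropriately, again via the three-squares theorem applied to the relevant configuration (here $v$ lives in a rank-one or rank-two lattice depending on $L'$, so one must track what $\pi_{L'^\perp}(\Z^3)$ actually is — for $L' = \Q(e_1+e_2)$ the orthogonal complement lattice is spanned by $e_1 - e_2$ and $e_3$, with the form $2x^2 + y^2$). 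The relative-primality condition from (i) is easy to arrange in each explicit construction by choosing representatives with the obvious gcd equal to $1$, and should be checked at the end of each case.

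\textbf{Main obstacle.} The genuinely fiddly part is the modular bookkeeping: verifying for each of the surviving residue classes of $D \bmod 16$ that a representation of the correct shape (with the correct discriminant factor $\disc(L')$ and with the primitivity/relative-primality side conditions) actually exists, using only the three-squares theorem and elementary manipulation of sums of squares modulo powers of $2$. In particular one must be careful that when $\disc(L') \in \{2,3\}$ the lattice $\pi_{L'^\perp}(\Z^3)$ carries a non-standard binary form, so "representing $D/\disc(L')$" is not literally a sum-of-two-squares question but a question about that specific binary form plus a square; confirming the right congruence conditions there is where the care is needed. Everything else — invoking Proposition~\ref{prop:Schmidtinductiveprocedure}, Legendre's and Lagrange's theorems, and the reduction~\eqref{eq:(5,2) enough} to $(n,k)=(5,2)$ which is already in place — is routine.
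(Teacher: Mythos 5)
Your overall route is the same as the paper's: Schmidt's construction (Proposition~\ref{prop:Schmidtinductiveprocedure}) applied with a line $L'\subset\Q^3$, the residues $D\not\equiv 0,4,7\bmod 8$ being handled by Legendre's three squares theorem (your $\disc(L')=1$ case, which is the same as composing Legendre with the embedding \eqref{eq:(5,2) enough}), and the remaining residues by a line of discriminant $2$. The problem is that the proposal stops exactly where the content of the lemma begins. Under the hypothesis the only surviving classes beyond Legendre are $D\equiv 4,8\bmod{16}$, and for these the whole proof consists of the ``modular bookkeeping'' you defer as the main obstacle: the paper takes $L'=\Q(1,-1,0)$, notes that the Schmidt parameter $v$ ranges over the projection of $\Z^3$ to $L'^{\perp}$, namely $\Z\tfrac{e_1+e_2}{2}+\Z e_3$, so that $\tfrac D2=h^2+\tfrac{a^2}{2}+b^2$ becomes a primitive representation of $D$ by $x_1^2+2x_2^2+2x_3^2$; this representation is then produced from Legendre applied to $\tfrac D4\equiv 1,2\bmod 4$ together with the identity $D=2(x+y)^2+2(x-y)^2+(2z)^2$ (with $x$ even, $y$ odd, so that the triple is primitive). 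None of this two-adic argument appears in your write-up, so what you have is a plan rather than a proof.

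Two concrete slips in the plan would also derail its execution. First, the case analysis: the hypothesis excludes both $7$ and $15\bmod{16}$, so no class with $D\equiv 7\bmod 8$ survives; the case ``$D\equiv 7\bmod{16}$'' you propose to treat is not only unnecessary but impossible --- by \eqref{eq:non-emptyfrom2in4} the set $\mathcal{H}^{4,2}(D)$ is empty there (and indeed $D=2h^2+a^2+2b^2$ with $D$ odd forces $h^2+b^2\equiv 3\bmod 4$), so chasing it would make the method appear to break. Second, in Schmidt's construction $v$ ranges over $\pi_{L'^{\perp}}(\Z^{3})$, not over $L'^{\perp}\cap\Z^3$: for $L'=\Q(e_1+e_2)$ the correct lattice is $\Z\tfrac{e_1-e_2}{2}+\Z e_3$, carrying the form $\tfrac12 x^2+y^2$, an index-two refinement of your $\Z(e_1-e_2)+\Z e_3$ with $2x^2+y^2$ (and the relative-primality condition in Proposition~\ref{prop:Schmidtinductiveprocedure}(i) must be checked in the larger lattice, so ``obvious gcd one'' in your coordinates is not enough). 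The half-integral vectors are exactly what the paper's computation exploits; with your smaller lattice you would instead be asking for primitive representations of $\tfrac D2$ by $x^2+y^2+2z^2$, a different and unverified question. Correcting the residue list and the lattice, and then actually carrying out the representation argument for $D\equiv 4,8\bmod{16}$, is the missing core of the proof.
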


\begin{proof}
By Legendre's three squares theorem and \eqref{eq:(5,2) enough}, we have
\begin{align*}
D \not\equiv 0,4,7 \mod 8 \implies \mathcal{H}^{4,2}(D) \neq \emptyset.
\end{align*}
So suppose that $D$ is congruent to $4,8$ modulo $16$. In view of Proposition~\ref{prop:Schmidtinductiveprocedure}, we let $L'$ be the line through $(1,-1,0)$ so that $\disc(L') = 2$. Thus, it remains to find relatively prime $h \in \N$ and $v \in \pi_{L'}(\Z^3)$ with $\frac{D}{2} = h^2 + Q(v)$. Note that
\begin{align*}
\pi_{L'}(\Z^3) = \Z \frac{e_1+e_2}{2} + \Z e_3
\end{align*}
so that we may choose $v= a \frac{e_1+e_2}{2} + be_3$ for $a,b \in \Z$. Hence, we need to find a solution to
\begin{align*}
\frac{D}{2} = h^2 + \frac{a^2}{4}+ \frac{a^2}{4} + b^2 = h^2 + \frac{a^2}{2} + b^2
\end{align*}
such that $(h,a,b)$ is primitive.

Equivalently, this corresponds to finding a primitive representation of $D$ by the ternary form $x_1^2 + 2x_2^2 + 2x_3^2$.
This is again a very classical problem and has been settled by Dickson~\cite{Dickson}; as the argument is very short and elementary we give it here.
Note that $\frac{D}{4}$ is congruent to $1$ or $2$ modulo $4$ and hence there is $(x,y,z) \in \Z^3$ primitive with $x^2+y^2+z^2 = \frac{D}{4}$.
As $\frac{D}{4}\equiv 1,2 \mod 4$ at least one and at most two of the integers $x,y,z$ must be even. Suppose without loss of generality that $x$ is even and $y$ is odd. One checks that 
\begin{align*}
D = 2(x+y)^2 + 2(x-y)^2 + (2z)^2
\end{align*}
and observing that $(x+y,x-y,2z)$ is primitive as $x+y$ is odd, the claim follows in this case.
\end{proof}

\begin{proof}[Proof of Proposition~\ref{prop:non-empty}]
As explained, it suffices to consider the case $(n,k) = (5,2)$.
In view of Lagrange's four squares theorem and \eqref{eq:(5,2) enough}, we may suppose that $D \equiv 0 \mod 8$.
Moreover, we can assume that $D \equiv 0,7,12,15 \mod 16$ by \eqref{eq:(5,2) enough} and Lemma~\ref{lem:(4,2)non-empty}.
To summarize, we only need to consider the case $D \equiv 0 \mod 16$.

We again employ the technique in Proposition~\ref{prop:Schmidtinductiveprocedure}.
Consider the subspace $L' \subset \Q^4$ spanned by the vector $(1,-1,0,0)$ which has discriminant $2$.
Then
\begin{align*}
\pi_{L'}(\Z^4) = \Z \frac{e_1+e_2}{2} + \Z e_3 + \Z e_4
\end{align*}
and as in the proof of Lemma~\ref{lem:(4,2)non-empty} we need to find a primitive representation $(h,a,b,c)$ of $\frac{D}{2}$ as
\begin{align*}
\frac{D}{2} = h^2 + \frac{a^2}{2} + b^2 + c^2.
\end{align*}
Setting $a=2$ and observing that $\frac{D}{2}-2 \equiv 6 \mod 8$ the claim follows from Legendre's three squares theorem.
\end{proof}
\section{More results around discriminants and induced forms}\label{sec:appendixprimitive}
%
%
%

The contents of this section of the appendix are of elementary nature and complement the results in \S\ref{SEC:quadratic_forms}.

\subsection{Local glue groups}\label{sec:local glue groups}

In this section we briefly explain how to compute the glue group in terms of local data.
This is largely in analogy to the local formula for the discriminant \eqref{eq:disc loctoglob}.
Define for any prime $p$
\begin{align*}
\mathcal{G}_p(L) = L(\Z_p)^\# / L(\Z_p)
\end{align*}
where we recall that $L(\Z_p) = L(\Q_p) \cap \Z_p^n$ and 
\begin{align*}
L(\Z_p)^\# = \{v \in L(\Q_p): \langle v,w\rangle \in \Z_p\}.
\end{align*}
Observe that $\mathcal{G}_p(L)$ is trivial for all but finitely many $p$. Indeed, $\mathcal{G}_p(L)$ is trivial if $L$ is $p$-unimodular for an odd prime $p$, i.e.~$p\nmid \disc_Q(L)$ (see also Remark~\ref{rem:glueloc} for a much finer statement).
Also, it is easy to adapt Lemma~\ref{lem:imageoforthproj} and Proposition~\ref{Prop:Isomorphism between the quotients of piL(Rn) and piL perp (Rn)} to their local analogues.
Here, we prove the following:

\begin{Lem}
We have
\begin{align}\label{eq:glue loctoglob}
\mathcal{G}(L) \simeq \prod_p \mathcal{G}_p(L).
\end{align}
\end{Lem}

Taking cardinalities, \eqref{eq:glue loctoglob} encodes the (obvious) local product formula for discriminants~\eqref{eq:disc loctoglob}.

\begin{proof}
The image of the natural inclusion $L(\Z) \hookrightarrow L(\Z_p)$ is dense for every $p$.
In particular, the image of $L(\Z)^\#$ under $L(\Q) \hookrightarrow L(\Q_p)$ lies in $L(\Z_p)^\#$ and is dense therein. We obtain a homomorphism $\iota:\mathcal{G}(L) \to \prod_p \mathcal{G}_p(L)$.
We prove that $\iota$ is the desired isomorphism. Let $(v_i)_i$ be an integral basis of $L(\Z)$.

Let $v+L(\Z)$ be in the kernel of $\iota$. Then $v \in L(\Z_p)$ for every $p$ or, equivalently, the coordinates of $v$ in the $\Z$-basis $(v_i)_i$ of $L(\Z)$ have no denominators in $p$ for every $p$. Hence $v \in L(\Z)$ and $\iota$ is injective.

As $\mathcal{G}_p(L)$ is trivial for all but finitely many $p$, it suffices to find for any $v \in L(\Z_p)^\#$ an element $w \in L(\Z)^\#$ with $w+L(\Z_p) = v+L(\Z_p)$ and $w \in L(\Z_q)$ for any $q \neq p$.
Let $v \in L(\Z_p)^\#$ and write $v = \sum_{i}\alpha_i v_i$ where $\alpha_i \in \Q_p$. For every $i$ let $\beta_i \in \Z[\frac{1}{p}]$ be such that $\alpha_i \in \beta_i + \Z_p$ and set $w = \sum_i \beta_i v_i\in L(\Q)$ as well as $u = w-v\in L(\Z_p)$.
Then clearly for every $i$
\begin{align*}
\langle w, v_i\rangle = \langle v,v_i\rangle + \langle u,v_i\rangle \in \Z_p,
\end{align*}
that is, $w \in L(\Z_p)^\#$, and $\langle w,v_i\rangle \in \Z[\frac{1}{p}]$. But $\Z_p \cap \Z[\frac{1}{p}] = \Z$ and hence $w \in L(\Z)^\#$.
Observe also that by construction $w \in L(\Z_q)$ for every prime $q \neq p$.
Hence $\iota$ is surjective.
\end{proof}

\begin{remark}\label{rem:glueloc}
The isomorphism in \eqref{eq:glue loctoglob} is particularly handy when one tries to explicitly compute glue-groups.
Indeed, recall that for any odd prime $p$ an integral quadratic form $q$ over $\Z_p$ is diagonalizable \cite[Ch.~8]{Cassels}.
For
\begin{align*}
q(x_1,\ldots,x_k) = \alpha_1p^{\ell_1}x_1^2 + \ldots + \alpha_k p^{\ell_k} x_k^2
\end{align*}
with units $\alpha_i \in \Z_p^\times$ and $\ell_i \geq 0$,
the glue-group is
\begin{align*}
\Z/p^{\ell_1}\Z \times \ldots \times \Z/p^{\ell_k}\Z.
\end{align*}
For $p=2$ an integral quadratic form $q$ need not be diagonalizable over $\Z_2$. However, by \cite[Lemma 4.1]{Cassels} we may write $q$ as a (direct) sum of forms of the following types in distinct variables:
\begin{equation} \label{eq: types of forms for p=2}
2^{\ell}\alpha x_1^2, \quad 2^{\ell} (2x_1x_2) \quad \text{and} \quad 2^{\ell}  (2x_1^2 + 2x_1x_2 + 2x_2^2)
\end{equation}
with $\ell\geq 0$ and $\alpha \in \Z_2^{\times}$.
An elementary computation leads to observing that the glue groups of the quadratic forms in \eqref{eq: types of forms for p=2} are respectively:
\begin{equation} \label{eq: types of glue groups for p=2}
\Z/ 2^{\ell}\Z \quad \Z/ 2^{\ell}\Z \times \Z/ 2^{\ell}\Z  \quad \text{and} \quad \Z/2^{\ell}\Z \times \Z/ 2^{\ell}\Z. 
\end{equation}
It follows that the glue group has essentially the same structure as in the case of $p$ odd.
More precisely, assume that
\[ q(x_1,\ldots,x_k) = q_1 + \cdots + q_m \]
where the $q_{i}$'s are forms as in \eqref{eq: types of forms for p=2} with exponents $\ell = \ell_i$ satisfying $\ell_1 \leq \ldots \leq \ell_m$, Then the glue group is a product of groups as in \eqref{eq: types of glue groups for p=2} with exponents $\ell_1 \leq \ldots \leq \ell_m$.
\end{remark}

\subsection{Indices of projected lattices}\label{sec:indices}

For any subspace $L\subset \Q^n$ we denote the index of $L(\Z)$ in $L \cap (\Z^n)^\#$ by $i(L)$.
Then the proof of Proposition~\ref{prop:disccomparison} and Lemma~\ref{lem:imageoforthproj}
shows that
\begin{align*}
\disc_Q(L^\perp) = \frac{i(L^\perp)}{i(L)} \disc_Q(L).
\end{align*}
The following proposition establishes a fundamental relation between the indices for $L$ and $L^\perp$.

\begin{Prop}
\label{Appendix. Proposition: short exact sequence with L cap R dual and L(R) dual}
Let $L\subset \Q^n$ be a subspace.
The sequence
\[ 0 \rightarrow (L^{\perp} \cap (\Z^n)^{\#})/L^{\perp}(\Z) \rightarrow  (\Z^n)^{\#}/\Z^n \rightarrow L(\Z)^{\#}/ \pi_L(\Z^n) \rightarrow 0. \]
obtained by inclusion and projection is exact.
In particular,
\begin{align*}
i(L) i(L^\perp) = \disc(Q).
\end{align*}
Similarly, for any prime $p$
\begin{align*}
[L(\Q_p)\cap (\Z_p^n)^\#:L(\Z_p)]\cdot [L^\perp(\Q_p)\cap (\Z_p^n)^\#:L^\perp(\Z_p)] = p^{\nu_p(\disc(Q))}.
\end{align*}
\end{Prop}

\begin{proof}
By Lemma~\ref{lem:imageoforthproj}, the orthogonal projection $\pi_L$ defines a surjective morphism
\[ f: (\Z^{n})^{\#} \rightarrow L(\Z)^{\#}/ \pi_L(\Z^n). \]
The kernel of this morphism can be described by:
\begin{equation}
\label{Appendix. Equation: characterization of kernel in proof of short exact sequence}
\ker(f) = \{ v \in (\Z^{n})^{\#} : \text{there exists} \ w \in \Z^n \ \text{such that} \ v-w \in L^{\perp} \}.
\end{equation}
Clearly $L^{\perp} \cap (\Z^n)^{\#}   \subset \ker(f)$. We claim that the inclusion of $L^{\perp} \cap (\Z^n)^{\#}$ into $\ker(f)$ induces an isomorphism:
\[ L^{\perp} \cap (\Z^n)^{\#} / L^{\perp}(\Z) \rightarrow \ker(f)/\Z^n. \]
The fact that the map $L^{\perp} \cap (\Z^n)^{\#} \rightarrow \ker(f)/\Z^n$ induced by the inclusion is surjective follows immediately from the characterization of $\ker(f)$ in \eqref{Appendix. Equation: characterization of kernel in proof of short exact sequence}. Since the kernel of this map is clearly $L^{\perp}(\Z)$, the claim is proven.
It follows that
\[ 0 \rightarrow L^{\perp} \cap (\Z^n)^{\#}/L^{\perp}(\Z) \rightarrow  (\Z^n)^{\#}/\Z^n \rightarrow L(\Z)^{\#}/ \pi_L(\Z^n) \rightarrow 0. \]
is a short exact sequence.
The local analogue follows similarly.
\end{proof}

\begin{remark}
It would be interesting to see statistical results regarding these indices. To give a concrete example, suppose $\disc(Q) =2$. Then clearly $i(L) \in \{1,2\}$ for any subspace $L$ and one can ask what the proportion of subspaces $L$ with $i(L) =1$ (or $i(L^\perp) =2$) is. If $k=n-k$, Proposition~\ref{Appendix. Proposition: short exact sequence with L cap R dual and L(R) dual} shows that the number of subspaces with $i(L) = 1$ and $i(L) =2$ is the same.
\end{remark}

\subsection{Primitive forms}

Here, we study to what extent the induced forms $q_L,q_{L^\perp}$ (defined in \S\ref{sec:notation-induced qf} up to equivalence) for a given subspace $L \in \Gr{k}(\Q)$ need to be primitive.
For instance, we establish that for $k<n-k$ the form $q_{L^\perp}$ needs to be essentially primitive (while $q_{L}$ does not).
First, observe that indeed the form $q_L$ need not be primitive:

\begin{Ex}\label{ex:non-prim}
Let $n \geq 6$, let $(e_i)_{i=1}^n$ denote the standard basis vectors of $\Q^n$ and suppose that $Q = Q_0$ is the standard positive definite form.
Let $(v_1,v_2) \in \Z^2$ be a primitive vector.
Then the integer lattice in the subspace 
\begin{align*}
 L = \mathrm{span}_{\Q} \{ v_1e_1+v_2e_2, v_1e_3+v_2e_4, v_1e_5+v_2e_6\}
\end{align*}
is spanned by $v_1e_1+v_2e_2, v_1e_3+v_2e_4, v_1e_5+v_2e_6$ which are orthogonal vectors. 
In this basis,
\begin{align*}
q_L(x_1,x_2,x_3) = (v_1^2+v_2^2) (x_1^2+x_2^2+x_3^2)
\end{align*}
which is a highly non-primitive form.
Similarly, $L^\perp(\Z)$ is spanned by the integer vectors $v_2e_1-1v_1e_2, v_2e_3-v_1e_4, v_2e_5-v_1e_6,e_{7},\ldots,e_n$ and hence in this basis
\begin{align*}
q_{L^\perp}(x_1,\ldots,x_{n-3}) = (v_1^2+v_2^2) (x_1^2+x_2^2+x_3^2) + x_4^2+\ldots + x_{n-3}^2
\end{align*}
In particular, $q_{L^\perp}$ is primitive if $n>3$ and otherwise $\gcd(q_{L^\perp})= \gcd(q_{L})$ (as $q_{L^\perp}=q_L$ in this specific example).
This type of behavior is generally true as established below.
For more examples we also refer to \cite[Example 2.4]{2in4}.
\end{Ex}

\begin{Prop}\label{prop:primitivity}
Let $L \in \Grk(\Q)$. If $k > n-k$, $\gcd(q_L)$ divides $\disc(Q)$ and
\begin{align*}
\disc(\tilde{q}_L) \asymp_Q \disc_Q(L).
\end{align*}
Conversely, if $k < n-k$, $\gcd(q_{L^\perp})$ divides $\disc(Q)$ and $\disc(\tilde{q}_{L^\perp}) \asymp_Q \disc_Q(L)$.

Moreover, if $k=n-k$ we have $\gcd(q_L) \asymp_Q \gcd(q_{L^\perp})$ and 
\begin{align*}
\disc(\tilde{q}_L) \asymp_Q \disc(\tilde{q}_{L^\perp}).
\end{align*}
\end{Prop}

For convenience of the reader, we provide two proofs of the first claim in the proposition; the second uses glue groups and generalizes to $k=n-k$.

\begin{proof}[First proof for $k \neq n-k$]
Fix a basis $v_1, \ldots , v_k$ of $L(\Z)$ and complete it into a basis $v_1, \ldots ,v_n$ of $\Z^n$. 
Let $v_1^{*}, \ldots , v_n^{*}$ be its dual basis. Since $k > n-k$, without loss of generality we may assume $v_1 \in \spn_{\R}(v_{k+1}, \ldots ,v_n)^{\perp}$.
Note that $v_1^{*} \in (\Z^n)^{\#}$ and so $\disc(Q)v_1^{*} \in \Z^n$. In particular, we may write
\[ \disc(Q)v_1^{*} = \sum_{s \leq n} a_s v_s \ \text{with} \ a_s \in \Z. \]
By our choice of $v_1$ we have
\[ \disc(Q) = \langle \disc(Q)v_1^{*}, v_1\rangle_Q = \sum_{s \leq k} a_s \langle v_s,v_1\rangle_Q \]
and the first claim follows as $\gcd(q_L)$ divides the right-hand side.
\end{proof}

\begin{proof}
Given a prime $p$ we write $\ord_p(q_L)$ for the largest integer $m$ with $p^m\mid\gcd(q_L)$.
Note that $\ord_p(q_L)$ can be extracted from the glue-group of $L$ whenever $p \mid \gcd(q_L)$ -- see Remark~\ref{rem:glueloc}.

To begin the proof, fix $p$ and note that $a_L := \ord_p(q_L)$ can be characterized as follows: it is the smallest integer $m$ so that there exists a primitive vector $v \in L(\Z_p)^\#$ with $p^m v \in L(\Z_p)$.
To see this, first assume $p$ is an odd prime. 
Then, as in Remark~\ref{rem:glueloc} (after possibly changing the basis), we may write
\begin{align*}
q_L(x_1,\ldots,x_k) = \alpha_1p^{\ell_1}x_1^2 + \ldots + \alpha_k p^{\ell_k} x_k^2
\end{align*}
with $\ell_1 \leq \ell_2 \leq \ldots \leq \ell_k$.
If $v$ is a vector as above, the expression for the glue-group in Remark~\ref{rem:glueloc} as well as primitivity imply that $m \geq \ell_1$. Conversely, it is easy to see that the first vector $v$ in the above (implicit) choice of basis of $L(\Z_p)$ satisfies $p^{-\ell_1}v \in L(\Z_p)^{\#}$ and is primitive. For $p=2$ the proof above can be adapted using Remark \ref{rem:glueloc}.

Define $a_{L}'$ as the smallest integer $m$ so that there exists a primitive vector $v' \in \pi_L(\Z_p^n)$ with $p^m v' \in L(\Z_p)$. We argue that $a'_L \leq a_L$. Let $v$ be as in the above definition of $a_L$.
Then, there exists an integer $i \leq a_L$ such that $p^iv \in \pi_L(\Z_p^n)$ and $p^iv$ is primitive in $\pi_L(\Z_p^n)$. 
For this integer $i$ set $v':=p^iv$ and observe that $p^{a_L-i}v' = p^{a_L}v \in L(\Z_p)$. Therefore, $a_L' \leq a_L-i \leq a_L$ as claimed. In analogous fashion, one argues that $a_L \leq a_L' + \ord_p(i_p(L))$ so that the following inequalities hold:
\begin{align*}
a_L' \leq a_L \leq a_L' + \ord_p(i_p(L)).
\end{align*}

Suppose now that $k>n-k$. Applying Proposition~\ref{Prop:Isomorphism between the quotients of piL(Rn) and piL perp (Rn)} we see that there exists $v' \in \pi_L(\Z_p^n)$ primitive with $v'\in L(\Z_p)$. Indeed, as $\pi_{L^\perp}(\Z_p^n)/L^\perp(\Z_p)$ is a product of at most $k$ non-trivial cyclic groups, the same is true for $\pi_{L}(\Z_p^n)/L(\Z_p)$ implying the claim.
Therefore, $a_L' = 0$ and hence $a_L \leq \ord_p(i_p(L))$. This shows that $\gcd(q_L) \mid i(L)$ which proves a sharpened version of the first part of the proposition (cf.~Proposition~\ref{Appendix. Proposition: short exact sequence with L cap R dual and L(R) dual}).

Suppose now that $k=n-k$. We show first that $a_L'=a_{L^\perp}'$. If $a_L'=0$, $\pi_{L}(\Z_p^n)/L(\Z_p)$ is a product of at most $k-1$ cyclic groups and hence the same is true for $\pi_{L^\perp}(\Z_p^n)/L^\perp(\Z_p)$ by Proposition~\ref{Prop:Isomorphism between the quotients of piL(Rn) and piL perp (Rn)}. This implies that $a_{L^\perp}'=0$.
If $a_L'\neq 0$, the number $a_L'$ is exactly the smallest order of a non-trivial element in $\pi_{L}(\Z_p^n)/L(\Z_p)$.  Applying the same for $L^\perp$ yields $a_L'=a_{L^\perp}'$ in all cases.
In particular,
\begin{align*}
a_L \leq a_{L^\perp}'+ \ord_p(i_p(L)) \leq a_{L^\perp}+ \ord_p(i_p(L)).
\end{align*}
Varying the prime $p$ we obtain that
\begin{align*}
\gcd(q_L) \mid \gcd(q_{L^\perp})i(L)
\end{align*}
and conversely. This finishes the proof of the proposition.
\end{proof}
\end{appendix}

\bibliographystyle{amsalpha}
\bibliography{Bibliography}

\end{document}